\documentclass[a4paper,10pt]{article}
\usepackage{mathtools}
\usepackage{geometry}
\usepackage[pdftex]{graphicx}
\usepackage{amsmath}
\usepackage{amsfonts}
\usepackage{amssymb}
\usepackage{amsthm}
\usepackage{epstopdf}
\usepackage{csquotes}
\usepackage{color}
\pdfminorversion=7
\usepackage{authblk}
\usepackage[percent]{overpic}
\usepackage{url}
\numberwithin{equation}{section}
\newtheorem{theorem}{Theorem}
\newtheorem{remark}{Remark}

\newtheorem{lemma}[theorem]{Lemma}

\newtheorem{proposition}[theorem]{Proposition}
\newtheorem{definition}[theorem]{Definition}

\numberwithin{theorem}{section}
\usepackage{algorithm}
\usepackage{algorithmic}
\usepackage{tcolorbox}
\usepackage{enumerate}
\usepackage[title]{appendix}
\usepackage{cite}
\usepackage{pdfpages}
\usepackage{authblk}
\usepackage{titling}
\geometry{margin=1.065in}

\usepackage[colorlinks]{hyperref}
\usepackage[capitalise]{cleveref}
\usepackage{cite}

\newcommand{\ket}[1]{|{#1}\rangle}
\newcommand{\bra}[1]{\langle{#1}|}

\DeclareMathOperator{\Tr}{Tr}
\DeclareMathOperator{\ran}{ran}

\DeclarePairedDelimiter\ceil{\lceil}{\rceil}

\newcommand{\de}{\ensuremath{\partial}}
\newcommand{\dee}{\ensuremath{\textrm{d}}}

\newcommand{\inty}[4]{\ensuremath{ \int_{#1}^{#2} \! #3 \, \dee#4 }}

\newcommand{\field}[1]{\mathbb{#1}}
\newcommand{\ip}[2]{\ensuremath{ \left< \left. #1 \right| #2 \right> } }

\renewcommand{\vec}[1]{\boldsymbol{#1}}
\let\oldhat\hat
\renewcommand{\hat}[1]{\oldhat{\boldsymbol{#1}}}

\begin{document}
\title{Computing spectral properties of topological insulators without artificial truncation or supercell approximation}
\author{Matthew J. Colbrook\thanks{Department of Applied Mathematics and Theoretical Physics, University of Cambridge and Centre Sciences des Données, Ecole Normale Supérieure, Paris.\\\hspace*{1.8em}Corresponding author: m.colbrook@damtp.cam.ac.uk}, Andrew Horning\thanks{Center for Applied Mathematics, Cornell University}, Kyle Thicke\thanks{Department of Mathematics, Technical University of Munich}, Alexander B. Watson\thanks{Mathematics Department, University of Minnesota Twin Cities, MN, USA.}}

\date{}

\maketitle

\begin{abstract}
Topological insulators (TIs) are renowned for their remarkable electronic properties: quantised bulk Hall and edge conductivities, and robust edge wave-packet propagation, even in the presence of material defects and disorder. Computations of these physical properties generally rely on artificial periodicity (the supercell approximation), or unphysical boundary conditions (artificial truncation). In this work, we build on recently developed methods for computing spectral properties of infinite-dimensional operators. We apply these techniques to develop efficient and accurate computational tools for computing the physical properties of TIs. These tools completely avoid such artificial restrictions and allow one to probe the spectral properties of the infinite-dimensional operator directly, even in the presence of material defects and disorder. Our methods permit computation of spectra, approximate eigenstates, spectral measures, spectral projections, transport properties, and conductances. Numerical examples are given for the Haldane model, and the techniques can be extended similarly to other TIs in two and three dimensions.
\end{abstract}

\begin{keywords}
spectra, spectral measures, resolvent, topological insulators, edge states, conductivity
\end{keywords}

\section{Introduction}

Topological insulators (TIs) are materials with remarkable electronic properties\footnote{For simplicity of presentation, in this work we treat topological insulators as synonymous with two-dimensional Chern insulators: two-dimensional topological insulators whose bulk topology is measured by the Chern number (the classification of topological insulators by dimension and symmetry class was given by \cite{doi:10.1063/1.3149495,doi:10.1063/1.3149481}, for reviews, see \cite{2010HasanKane,Qi2011,Moore2010,BernevigHughes,2013FruchartCarpentier}). Our methods are not fundamentally restricted to two dimensions or Chern insulators.}. The bulk Hall and edge conductances of a TI are quantised precisely, even in the presence of defects and disorder (see \cite{1993Hatsugai,Qi2011,2018GrafShapiro,2010HasanKane,Moore2010,BernevigHughes,2005ElgartGrafSchenker,2013GrafPorta,2002KellendonkRichterSchulz-Baldes,1999Schulz-BaldesKellendonkRichter,2004KellendonkSchulz-Baldes_2,2016ProdanSchulz-Baldes,Bellissard1994,Avron1994,Bestwick2015,Chang2013,Chang2015}
 and the references therein). The remarkable robustness of these physical quantities has generated huge interest in TIs for potential industrial applications such as spintronics, quantum computing, and the ``topological transistor'' \cite{2010HasanKane,Moore2010,BernevigHughes,Qi2011,Gilbert2021}. The importance of topological insulators was confirmed by the award of the 2016 Nobel prize to Thouless, Haldane, and Kosterlitz for foundational work on topological phases of matter.

The edge currents of TIs are mediated by electronic states localised at edges known as edge states. The robustness of the edge conductance of a TI can be seen at the level of localised wave-packets formed from these edge states, which snake around corners and defects of the edge, even in the presence of disorder \cite{1982Halperin,2010HasanKane,Qi2011,Moore2010,BernevigHughes,Michala2021,Buchendorfer2006,Bols2021,1999Schulz-BaldesKellendonkRichter,Frohlich2000,2002KellendonkRichterSchulz-Baldes,Drouot2021,Ludewig2020,bal2021edge}. The behaviour of these wave-packets has spurred interest in building photonic and acoustic devices which mimic topological insulators for wave-guiding applications \cite{2009WangChongJoannopoulosSoljacic,2013Rechtsmanetal,2015SusstrunkHuber,Tauber2021,2019Lee-ThorpWeinsteinZhu,2017DelplaceMarstonVenaille}.

The significant interest in TIs for industrial applications makes finding good numerical methods for computing their electronic properties essential. Non-interacting electrons in TIs are typically modelled by spatially discrete Schr\"odinger equations known as tight-binding models. In order to account for the vast number of ionic cores in typical materials, the bulk of the material is generally treated as extending infinitely in all directions. In contrast, the edge of the material is modelled as a truncation of the infinite bulk model in one direction. The infinite extent of these models, coupled with non-periodic defects and disorder in realistic models, makes the numerical computation of the electronic properties of TIs very challenging. 

In many scenarios, existing methods for computing electronic properties of TIs are unsatisfactory because they rely on imposing either artificial periodicity (supercell approximation), or unphysical boundary conditions (artificial truncations), or some combination of the two (see, for example, the package \verb|PythTB| \cite{vanderbilt_2018}). As far as we are aware, supercell approximation has not been rigorously justified in this context. In any case, it cannot be used in the direction transverse to an edge. At the same time, artificial truncation leads immediately to spectral pollution and spurious edge states \cite{doi:10.1137/19M1282696}.

This paper applies recently developed methods for rigorously and efficiently computing spectral properties of infinite-dimensional operators \cite{Colbrook2019,colbrook_IMA_LT,colbrook2019b,colbrook2019computing,colbrook2020,colbrook2021semigroups,colbrook2020foundations} to the problem of computing the electronic properties of TIs. We do this \emph{without assuming model periodicity} and \emph{without introducing artificial truncations.} From a numerical analysis perspective, the majority of methods that deal with spectra in infinite dimensions are of a ``truncate-then-solve'' flavour. A truncation/discretisation of the operator is adopted, possibly taking advantage of periodicity in suitable directions (e.g., supercell method), and methods for computing the eigenvalues of a finite matrix are used. In contrast, we adopt a ``solve-then-discretise'' approach. The ``solve-then-discretise'' paradigm has also recently been applied to other spectral problems \cite{horning2020feast,webb2021spectra,doi:10.1137/19M1282696}, extensions of classical methods such as the QL and QR algorithms \cite{webb_thesis,colbrook2019infinite}, Krylov methods \cite{gilles2019continuous}, and the computation of resonances \cite{benartzi2020computing,Jonathan_res}. We are concerned with the following four types of spectral computations for infinite-dimensional operators:
\begin{itemize}
    \item[(P1)] Computing \textbf{spectra} with error control and computing \textbf{approximate eigenstates}\footnote{Where the spectrum is discrete, these approximate eigenstates are guaranteed to converge to exact eigenstates. This is sufficient to compute exact edge states of periodic TIs; see \cref{sec:edge_conductance}.} discussed in \cref{Num_spectra}, where we use the method of \cite{Colbrook2019}.
	\item[(P2)] Computing \textbf{spectral measures}, discussed in \cref{Num_spec_meas}, where we use the method of \cite{colbrook2019computing,colbrook2020}.
	\item[(P3)] Computing \textbf{spectral projections}, discussed in \cref{Num_spec_proj}, for which we propose a new efficient method building on the ideas in \cite{colbrook2019computing,colbrook2020}.
	\item[(P4)] Computing \textbf{transport properties} (and more generally the \textbf{functional calculus}), discussed in \cref{Num_FC}, where we use the method of \cite{colbrook2021semigroups}.
\end{itemize}

With this set of computational tools in hand, we focus on computing the following physical properties of TIs:
\begin{itemize}
    \item[(PA)] \textbf{Bulk} and \textbf{edge conductances} of non-periodic TIs, discussed in~\cref{sec:bulk_conductance,sec:edge_conductance}, with results shown in \cref{sec:res:conductance}.
    \item[(PB)] \textbf{Edge states} of periodic TIs and their \textbf{dispersion relations}, discussed in~\cref{sec:edge_conductance}, with results shown in \cref{sec:res:11}.
    \item[(PC)] \textbf{Approximate edge states} and \textbf{edge wave-packets} of non-periodic TIs, and their \textbf{spectral measures}, discussed in~\cref{sec:edge_conductance}, with results shown in \cref{sec:res:non-periodic,sec:res:WPs}.
    \item[(PD)] \textbf{Dynamics} of edge wave-packets of non-periodic TIs, discussed in~\cref{sec:edge_conductance}, with results shown in \cref{sec:res:time_prop}.
\end{itemize}
For brevity, we have restricted ourselves to reporting results for the Haldane model \cite{1988Haldane}. However, our methods are not fundamentally restricted to this model and allow the computation of the electronic properties of more general TIs in two and three dimensions.

The paper is organised as follows. First, we discuss the motivation and idea of our methods in \cref{sec:mtvhh}. We then give a summary of the algorithms in \cref{num_analysis_user_manual}. The Haldane model and its physics are discussed in \cref{sec:Haldane}. Results are reported in \cref{sec:resultshjhj}. We also provide appendices of pseudocode, proofs and further details on the physical model. 

Finally, code for our paper can be found at \texttt{https://github.com/SpecSolve/SpecTB}~\cite{SpecTB_code}. We hope this paper can also act as a user manual for those who wish to apply these techniques to their problems of interest.

\section{Motivation for the numerical methods}
\label{sec:mtvhh}

This work presents algorithms for discrete models commonly used to model electrons in materials known as tight-binding models. For such models, the Hilbert space is always isomorphic to the space of square summable sequences $l^2(\field{N})$. Basis vectors correspond to atomic sites, perhaps with additional internal degrees of freedom such as sublattice label and spin. In this basis, the Hamiltonian is described by an infinite Hermitian matrix $\widetilde{H}=\{\widetilde{H}_{ij}\}_{i,j\in\mathbb{N}}$, which we assume to be sparse, or finite range, i.e. finitely many non-zero entries in each column.\footnote{We use the notational $\widetilde{\cdot}$ to distinguish between the abstract Hamiltonian $H$ and its representation as an infinite matrix $\widetilde{H}$ on $l^2(\mathbb{N})$. This notation is to avoid confusion later on, where we write $H$ in terms of infinite matrices acting on Hilbert spaces different to $l^2(\mathbb{N})$.} After a suitable ordering of the sites (e.g., by positional radius from an origin), there exists a function $f:\mathbb{N}\rightarrow\mathbb{N}$ such that $\widetilde{H}_{ij}=0$ if $i>f(j)$. Thus $f$ describes the sparsity of $\widetilde{H}$. We, therefore, describe the algorithms below for infinite sparse matrices representing Hermitian Hamiltonians. The restriction to sparse tight-binding models is not fundamental: for non-sparse matrices and even non-Hermitian operators, see~\cite{Colbrook2019}. Extensions of the algorithms to unbounded operators and partial differential operators can be found in the relevant papers~\cite{colbrook2019b,colbrook2019computing,colbrook2020,colbrook2021semigroups}. In this paper, \textbf{our aim is to compute the spectral quantities of interest from the infinite matrix $\widetilde{H}$.}

\subsection{Rectangular, as opposed to square, truncations}
\label{sec:bijbibav}

In the context of this paper, the algorithms we use rely on rectangular, as opposed to square, truncations of $\widetilde{H}$. Since this may be an unfamiliar approach to the reader, we first explain the general idea before discussing the algorithms. See also \cite{colbrook_IMA_LT} for a pedagogical description. In what follows, we use $\mathrm{Sp}(\cdot)$ to denote the spectrum. 

\begin{figure}
\centering
\begin{tabular}{c|c|c}
(a) Infinite Lattice & (b) Finite Truncation & (c) Our Method \\
\hline
\begin{overpic}[width=0.30\textwidth,clip,trim={0mm 0mm 0mm -20mm}]{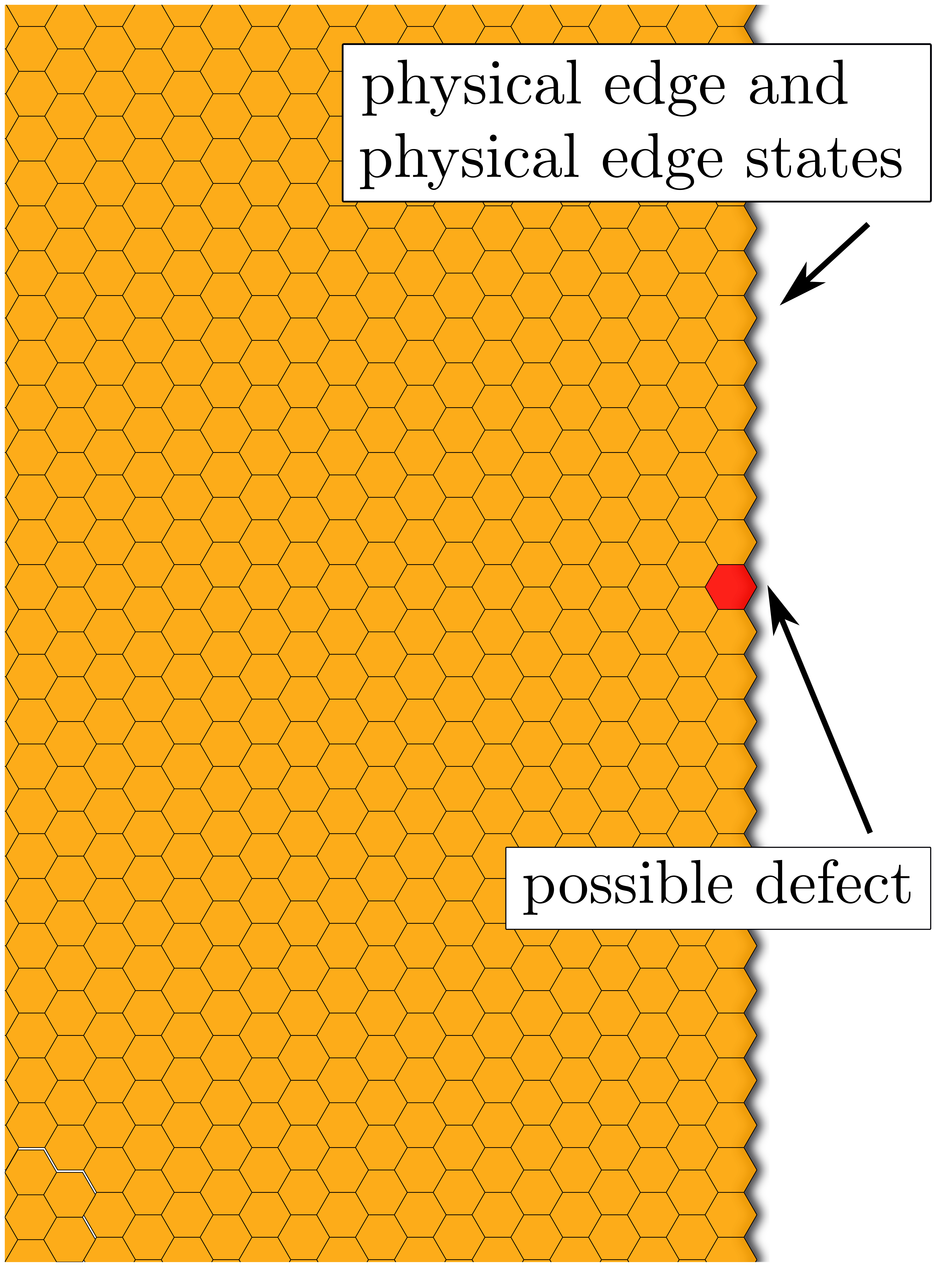}
\end{overpic}&
\begin{overpic}[width=0.30\textwidth,clip,trim={0mm 0mm 0mm -20mm}]{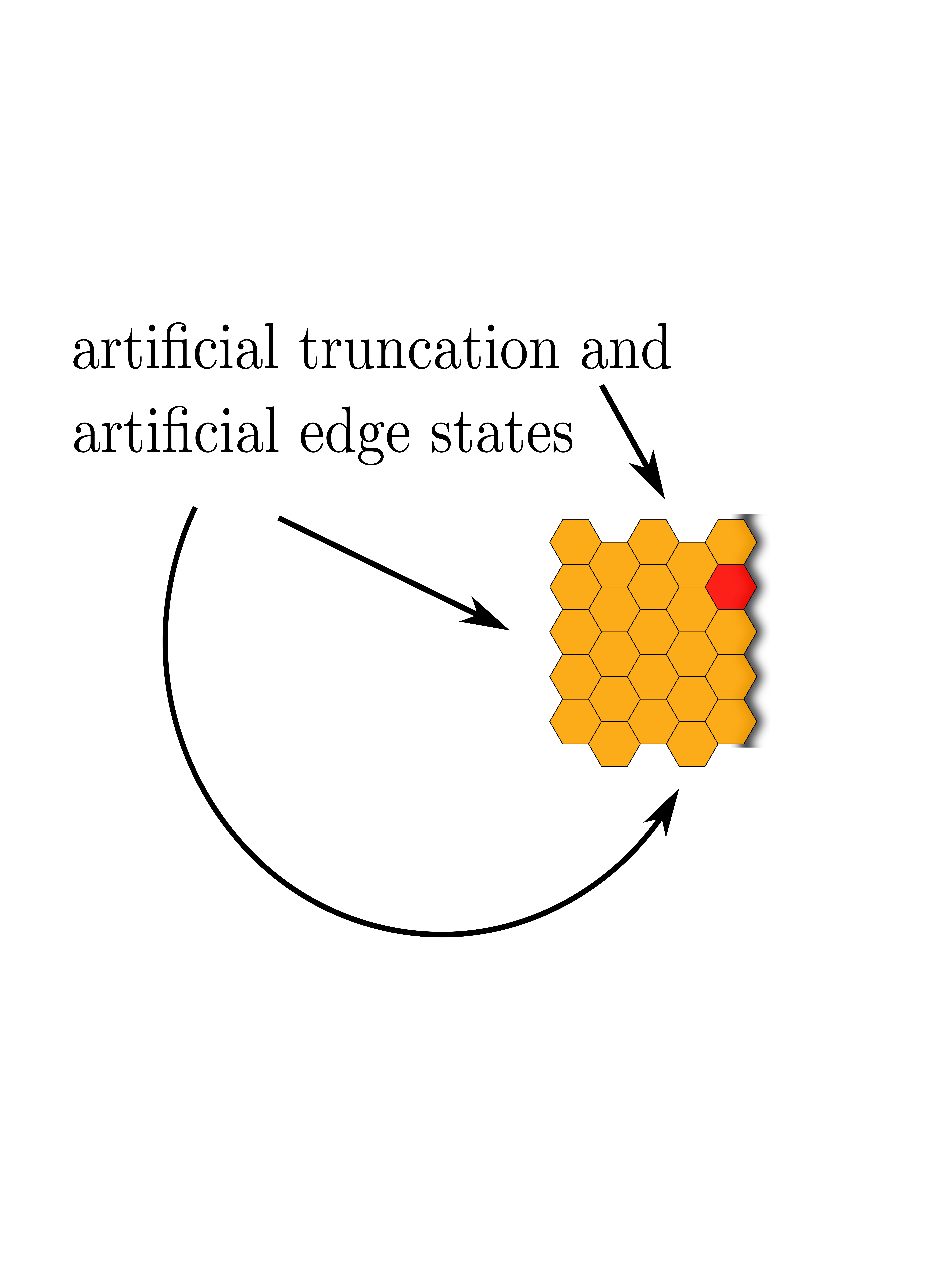}
\end{overpic}&
\begin{overpic}[width=0.30\textwidth,clip,trim={0mm 0mm 0mm -20mm}]{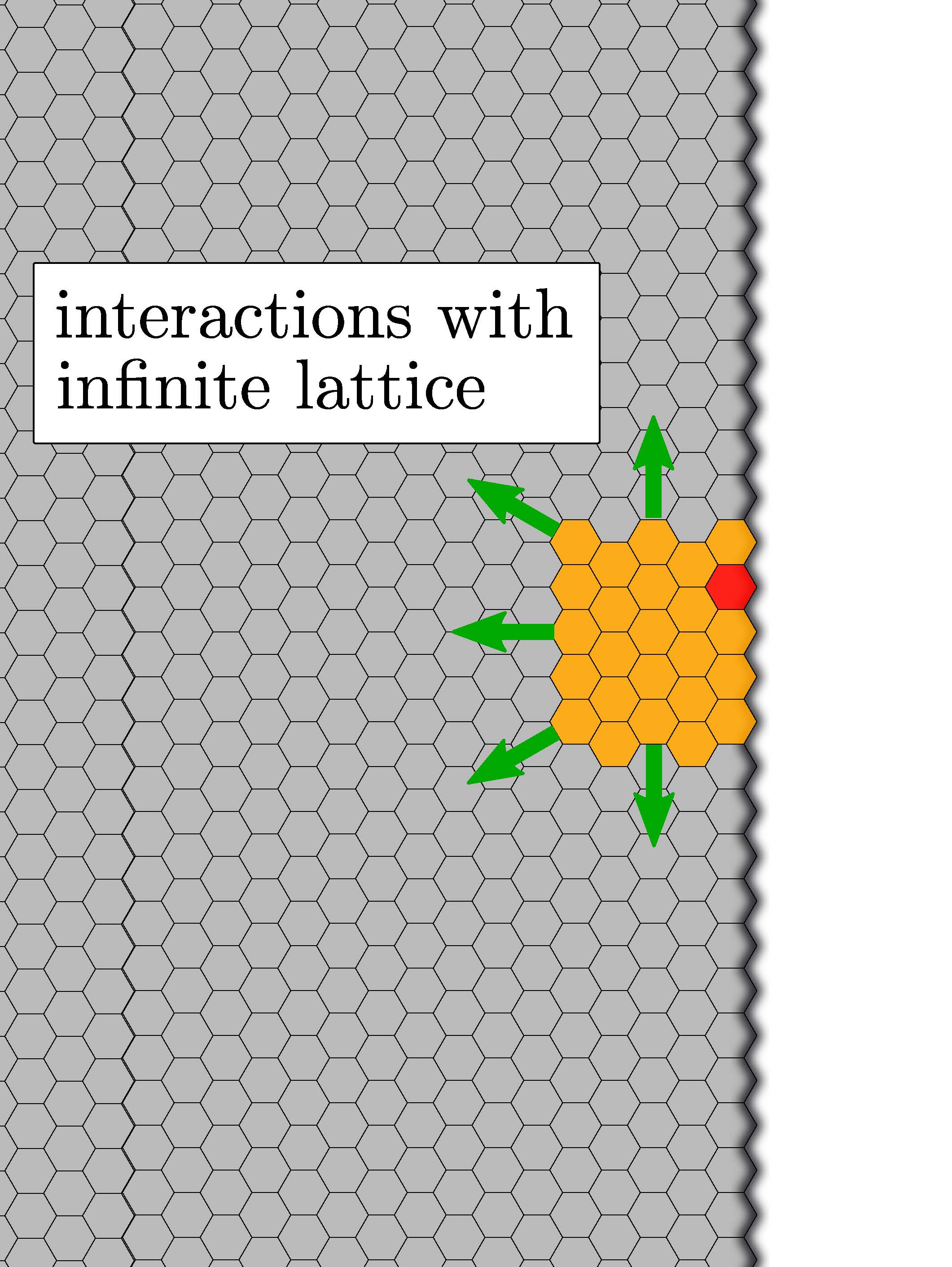}
\end{overpic}\\
\hline
\rule{0pt}{4ex}  
$\widetilde{H}$ & $P_n\widetilde{H}P_n$ & $P_{f(n)}\widetilde{H}P_n$\\
\begin{overpic}[width=0.30\textwidth,trim={0mm 0mm 0mm -3mm},clip]{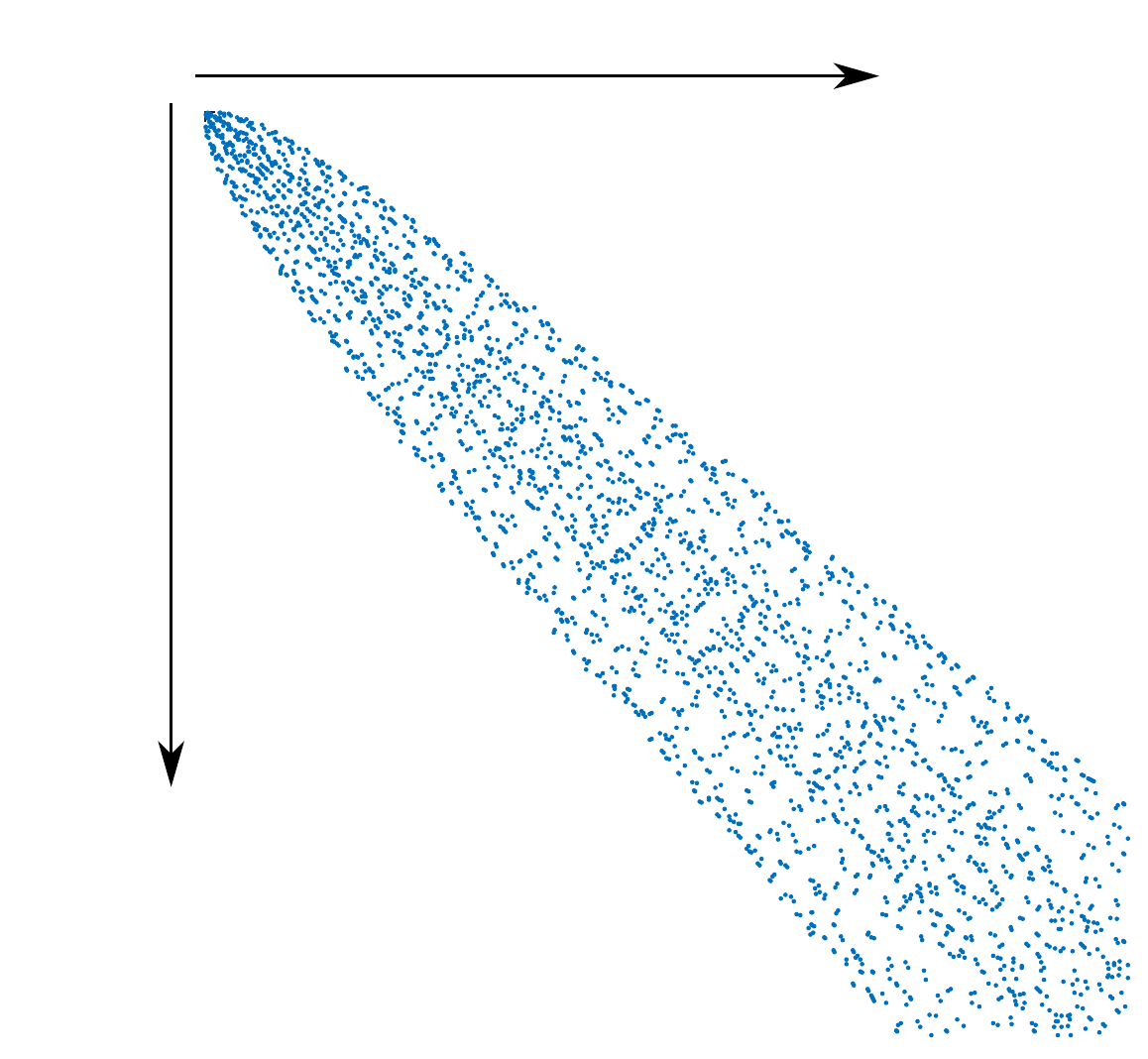}
\end{overpic}&
\begin{overpic}[width=0.30\textwidth,trim={0mm 0mm 0mm -3mm},clip]{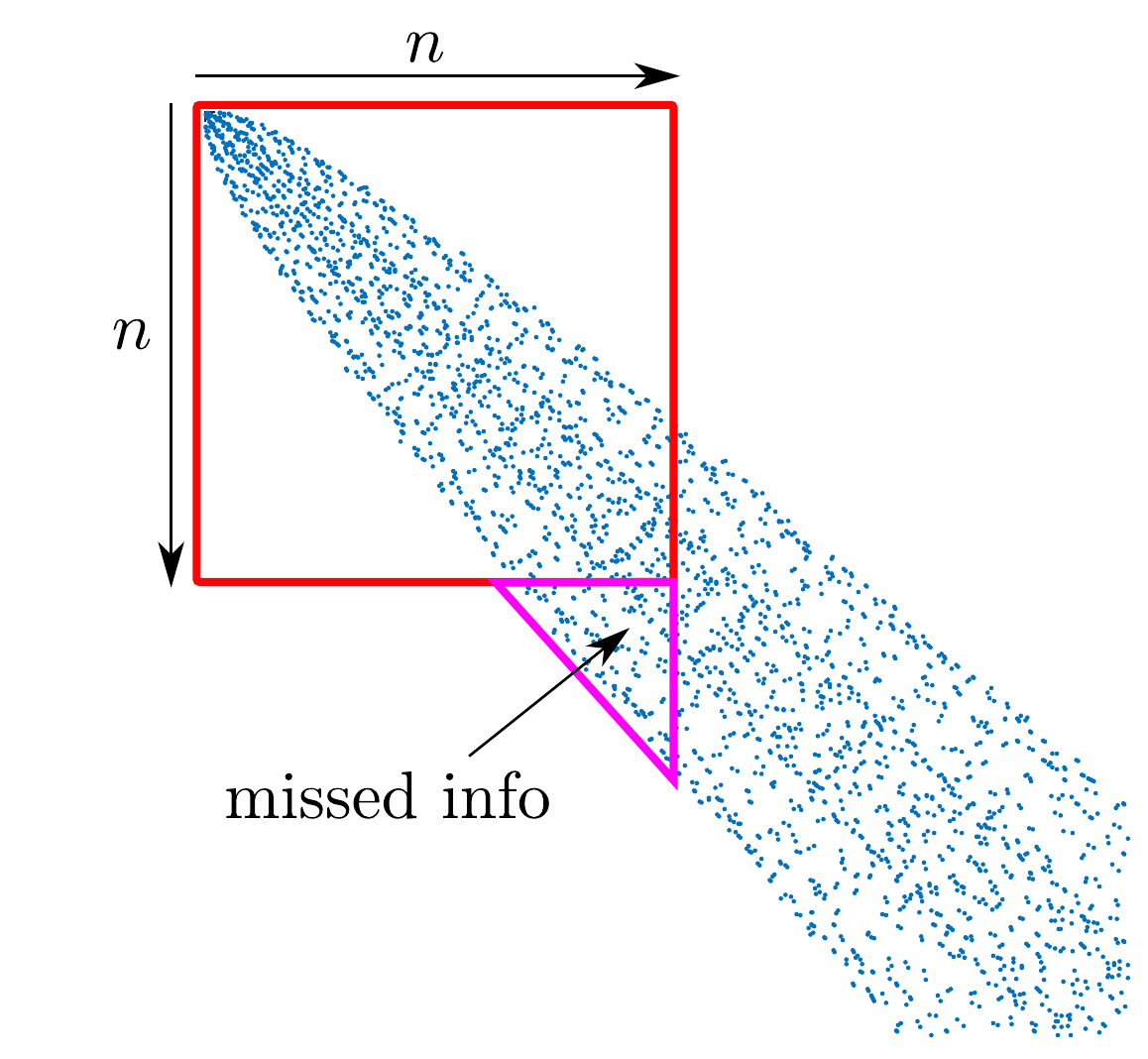}
\end{overpic}&
\begin{overpic}[width=0.30\textwidth,trim={0mm 0mm 0mm -3mm},clip]{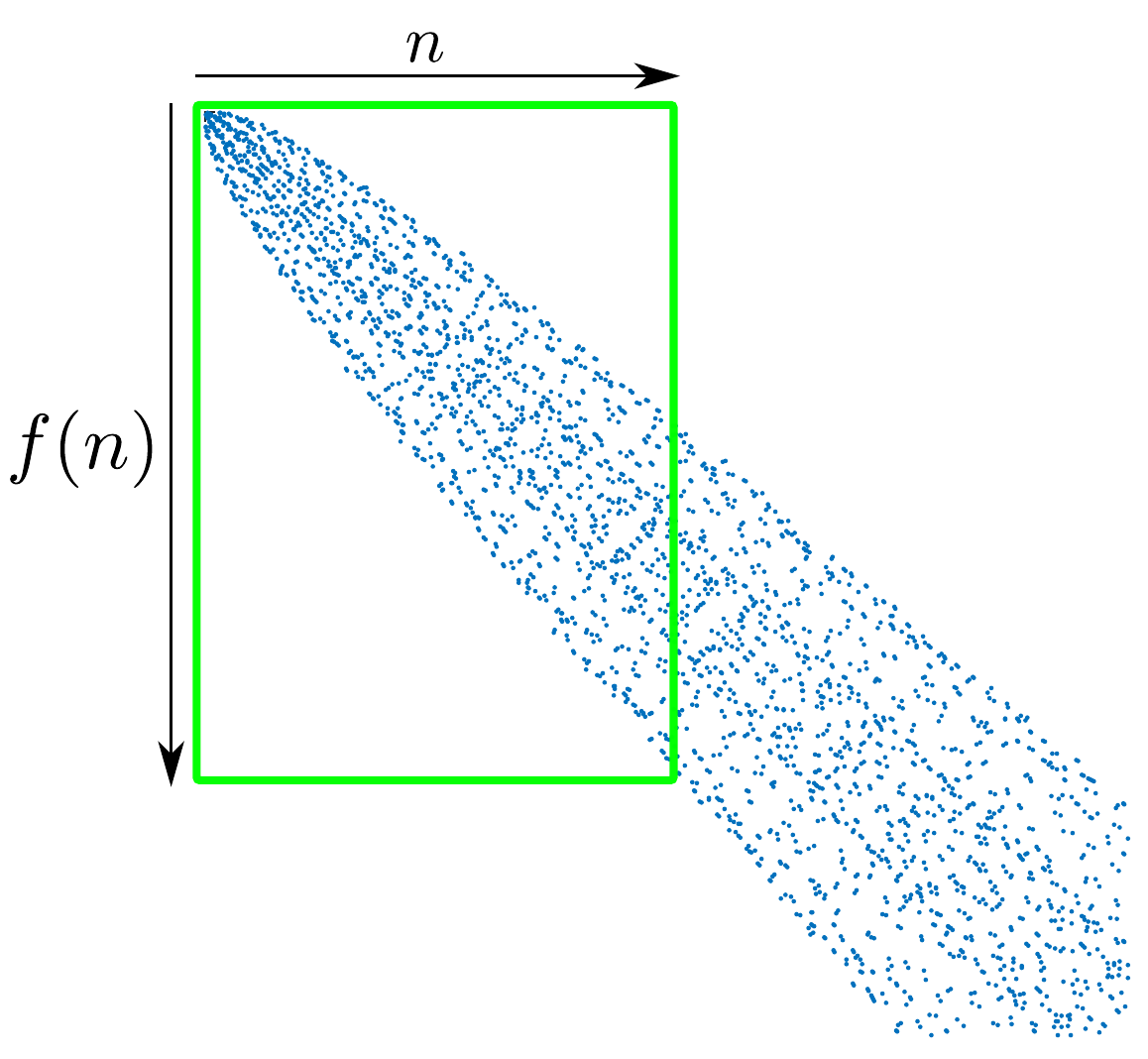}
\end{overpic}
\end{tabular}
\caption{Top: (a) Infinite hexagonal lattice with an infinite edge and possible defect. (b) Finite truncation of tile to $n$ sites. (c) Finite truncation with interactions shown as green arrows (our method). Bottom: The corresponding sparsity patterns (non-zero entries of the infinite matrix $\widetilde{H}$). The boxes show the different types of truncations of the operator. In (c), $f(n)$ is chosen to include all of the interactions of the first $n$ sites (basis vectors).}
\label{fig1}
\end{figure}

\begin{figure}
\centering
\begin{overpic}[width=0.5\textwidth,clip,trim={0mm 0mm 0mm 0mm}]{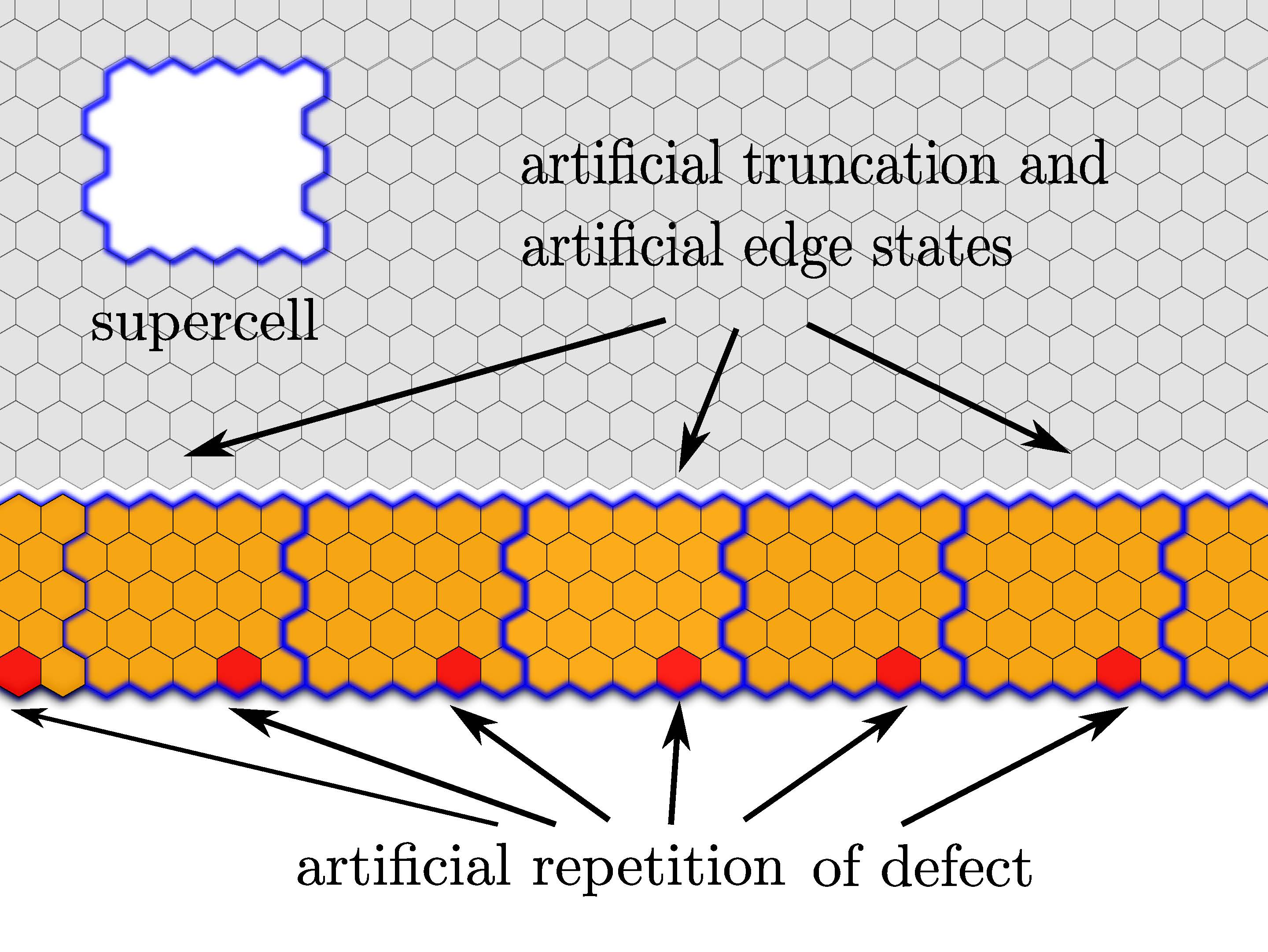}
\end{overpic}
    \caption{The supercell method on ribbon geometry combines periodic approximation along the edge with finite truncation away from the edge (note that the orientation of \cref{fig1} has been rotated by $\pi/2$). The artificial edge typically leads to numerical artifacts in the computed spectrum. When the lattice contains defects or disorder, the artificial periodicity may also contribute to numerical artifacts or degrade accuracy.}
\label{fig2}
\end{figure}

\vspace{2mm}

\underline{\textbf{Model:}} As a concrete example, consider a finite range Hamiltonian on a hexagonal lattice such as the Haldane model. The situation is shown in \cref{fig1} (panel (a)), where, for the sake of illustration, we have also added a physical edge (so that the model is a half lattice) as well as the position of a potential defect. The presence of the defect is significant because it breaks the translation symmetry of the system parallel to the edge, meaning that spectral properties cannot be computed (at least, not \emph{exactly}, see the `supercell method' below) using Bloch's theorem\footnote{Bloch's theorem (see, for example, \cite{ashcroft_mermin,reed_simon_4}) states that eigenfunctions of periodic Hamiltonians can be decomposed as $\psi(\mathbf{r})=e^{i\mathbf{k}\cdot\mathbf{r}}p(\mathbf{r})$ where $p$ shares the periodicity of the Hamiltonian. Bloch's theorem reduces spectral computations on infinite domains to computations (parametrised by $\mathbf{k}$) on domains that are bounded in each direction of periodicity, with periodic boundary conditions {(up to a phase)}.}
parallel to the edge.

\vspace{2mm}

\underline{\textbf{Previous approaches:}} Let $P_n$ denote the orthogonal projection onto the linear span of the first $n$ basis vectors. The most straightforward approach to computing spectral properties of the infinite operator ${H}$ is to compute the spectral properties of large square truncations of the matrix $\widetilde{H}$ and hope that the computations converge in the limit of large truncations. Mathematically, {this amounts to computing spectral properties of} the finite-dimensional matrices $P_n \widetilde{H} P_n$, where $n$ is a large positive integer (shown as a red box in \cref{fig1}). Physically, this corresponds to studying the interactions of a finite number of sites within the truncation (\cref{fig1} (b)). Although this method is straightforward, it is easy to see that the matrices $P_n \widetilde{H} P_n$ can have eigenvalues that never approach the spectrum of ${H}$, even as $n \rightarrow \infty$. This is an example of the general phenomenon known as `spectral pollution', where eigenvalues of finite discretisations/truncations can cluster in gaps between the essential spectrum of infinite self-adjoint operators \cite{Pokrzywa_79,2004DaviesPlum,2010LewinSere} as the truncation size increases. In the context of TIs, which necessarily have eigenstates localised at edges, spectral pollution arising from the new edges created by the truncation is inevitable, see \cite{doi:10.1137/19M1282696}.

In the TI literature, a common approach (taken, e.g., in the package \verb|PythTB| \cite{vanderbilt_2018}) to computing spectral properties of ${H}$ is the `supercell method on ribbon geometry'. The method involves two approximations. The first (the `supercell method') is to approximate the edge of the material by a periodic edge with a large fundamental cell (the `supercell') by repeating the defect along the edge. The spectral properties of the periodic edge are then related, via Bloch's theorem, to those of the Hamiltonian restricted to a semi-infinite strip extending perpendicular to the edge with periodic (up to a phase) boundary conditions along the boundaries of the strip. The second approximation is then to truncate these semi-infinite Hamiltonians far from the original edge so that the computational domain forms a `ribbon'. \cref{fig2} illustrates this approach. 

Supercell approximations have been proved to converge as the supercell width increases in other contexts, see, e.g., \cite{2005Soussi,Ehrlacher2016,doi:10.1137/18M122830X}, but we are not aware of any work justifying them in the present context. In practice, supercell approximations can be computationally inefficient when a large supercell is required, e.g., for materials with disorder (for a comprehensive discussion in the context of photonic quasicrystals, see~\cite{rodriguez2008computation}). As for the second approximation, spectral pollution is inevitable, leading to results that can be misleading and difficult to interpret (see~\cref{rem:ribbon}). It is worth remarking that the second approximation must be dealt with even when the supercell approximation is exact, e.g., when there is no defect at all! In certain circumstances, the second approximation can be removed (see~\cref{rem:GFM}).

\begin{remark} \label{rem:ribbon}
It is common in the physics literature to accept the additional spectrum arising from the truncation away from the edge and simply treat the system as having a second edge. However, the spectrum of the truncated Hamiltonian is often clearly different from that of the semi-infinite operator: see, for example, Lee--Thorp \cite{2016Lee-Thorp}, in particular Figure 26.7, and compare with Figure 5.3 of \cite{doi:10.1137/19M1282696}.
\end{remark}

\begin{remark} \label{rem:GFM}
    Two of the authors have introduced a method (the `Green's function method') which computes the discrete spectrum and associated eigenfunctions of the semi-infinite Hamiltonians obtained via supercell approximation which eliminates spectral pollution, see \cite{doi:10.1137/19M1282696}. Similarly to the present work, the fundamental idea of the `Green's function method' is to work with the resolvent. The main contrast between \cite{doi:10.1137/19M1282696} and the present work is that here we make fewer assumptions on the form of the edge Hamiltonian: we do not assume periodicity either parallel to the edge or into the bulk. We also compute other spectral properties not considered in \cite{doi:10.1137/19M1282696}: spectral measure, spectral projection, and time propagation. 
\end{remark}

\vspace{2mm}

\underline{\textbf{Rectangular truncations:}} In this work, we compute spectral properties using rectangular truncations of the form (shown as a green box in \cref{fig1}):
\begin{equation}
\label{rectangular_key}
P_{f(n)}\widetilde{H}P_n\in\mathbb{C}^{f(n)\times n}.
\end{equation}
Recall that $f$ describes the sparsity pattern of the matrix. In our case, the rectangular truncation $P_{f(n)}\widetilde{H}P_n$ corresponds to including all of the interactions of the first $n$ sites (the first $n$ columns of $\widetilde{H}$) without needing to apply boundary conditions. \cref{fig1} (panel (c)) shows the general idea. This truncation is in sharp contrast to {na\"ive} methods that typically take a square truncation of the matrix $\widetilde{H}$, such as $P_n\widetilde{H}P_n\in\mathbb{C}^{n\times n}$, with a boundary condition. This difference allows us to rigorously compute properties via computation of the resolvent operator
$$
R(z,\widetilde{H})=(\widetilde{H}-z)^{-1},\quad z\notin\mathrm{Sp}(H).
$$
Once we have computed the resolvent, we can then compute the spectral properties of the operator $H$. This approach lends itself to adaptive computations of the full infinite-dimensional operator directly, eliminating \emph{both} of the approximations involved in the `supercell method on ribbon geometry'. For our computational problems, this allows:

\begin{itemize}
	\item[(P1)] Computation of spectra and approximate eigenstates with guaranteed error control. In the case of discrete spectrum, the approximate eigenstates correspond to bona fide eigenstates (e.g., edge states when the edge is periodic).	
	\item[(P2)] \& (P3) Computation of spectral measures/projections with guaranteed convergence and given rates of approximation.
	\item[(P4)] Computation of the functional calculus (and hence transport properties) with guaranteed error control.
\end{itemize}
With this technique in hand, we can reliably probe the spectral properties of systems in infinite dimensions. Indeed, this technique is already allowing for the discovery and investigation of new physics in quasicrystalline systems, including their transport and topological properties \cite{johnstone2021bulk}.

\begin{remark}[Foundations of computation]
Our methods form part of a wider programme on the foundations of computations. One can classify computational spectral problems (and other types of computational problems) into a hierarchy (the SCI hierarchy) \cite{colbrook2020foundations,SCI_big,hansen2011solvability,colbrook2019b,colbrook2019computing,colbrook2019infinite,colbrook2019computation,ben2021universal,rosler2021computing,colbrook2020pseudoergodic}. This measures the intrinsic difficulty of computational problems and provides proofs of the optimality of algorithms, realising the limits of what computers can achieve. Beyond spectral theory, this framework is now being applied to optimisation, machine learning and artificial intelligence, solving partial differential equations, and computer-assisted proofs\footnote{See for example the work by C. Fefferman \& L. Seco (Dirac-Schwinger conjecture) \cite{fefferman1996interval,fefferman1990} and T. Hales et. al (Kepler's conjecture/Hilbert's 18th problem) \cite{Hales_Annals, hales_Pi} that implicitly prove results in the SCI hierarchy.} \cite{colbrook2021can,opt_big,webb2021spectra,colbrook2021semigroups,becker2020computing,Jonathan_res,benartzi2020computing}. The SCI hierarchy generalises S. Smale's seminal work \cite{Smale2, Smale_Acta_Numerica}  with L. Blum, F. Cucker, M. Shub \cite{ BSS_Machine, BCSS} and his program on the foundations of scientific computing  and existence of algorithms pioneered by C. McMullen \cite{McMullen1,mcmullen1988braiding}  and P. Doyle \& C. McMullen \cite{Doyle_McMullen}. As science and society become increasingly reliant on computations, it is essential to understand what is computationally possible and design algorithms that are optimal and achieve these bounds. For the sake of brevity, we have omitted this wider framework, but invite the interested reader to consult the above references.
\end{remark}

\section{The infinite-dimensional numerical methods}
\label{num_analysis_user_manual}

Here we briefly describe the algorithms for infinite-dimensional spectral computations. Pseudocode is provided in \cref{append_psueod}.

\subsection{Computation of spectra and approximate eigenstates}
\label{Num_spectra}

We utilise an algorithm, developed in \cite{Colbrook2019}, that computes the spectrum of an infinite-dimensional operator with error control. Recall that in our setting, the Hamiltonian $H$ can be represented by an infinite Hermitian matrix, $\widetilde{H}=\{\widetilde{H}_{ij}\}_{i,j\in\mathbb{N}}$ and we are given a function $f:\mathbb{N}\rightarrow\mathbb{N}$ such that $\widetilde{H}_{ij}=0$ if $i>f(j)$. Thus $f$ describes the sparsity of $\widetilde{H}$. Our starting point is the function
\begin{equation}
F_n(z):=\sigma_{\mathrm{inf}}(P_{f(n)}(\widetilde{H}-z)P_n),
\end{equation}
where we remind the reader that $P_m$ denotes the orthogonal projection onto the linear span of the first $m$ basis vectors. We also use $\sigma_{\mathrm{inf}}$ to denote the smallest singular value of the corresponding rectangular matrix. Since our operator is normal (commutes with its adjoint), the function $F$ is an upper bound for the distance of $z$ to the spectrum $\mathrm{Sp}(H)$, and converges down to this distance uniformly on compact sets as $n\rightarrow\infty$ \cite{Colbrook2019}. Physically, $F_n(z)$ is the square-root of the ground state energy of the folded Hamiltonian $P_n(\widetilde{H}-z)^*(\widetilde{H}-z)P_n$. There are numerous ways to compute $F_n$, such as standard iterative algorithms or incomplete Cholesky decomposition of the shifts $P_n(\widetilde{H}-z)^*P_{f(n)}(\widetilde{H}-z)P_n$ (see the supplementary material of \cite{Colbrook2019} for a discussion). The other ingredient we need is a grid of points $G_n=\{z_1^{(n)},...,z_{j(n)}^{(n)}\}\subset\mathbb{R}$ providing the wanted resolution $r_n$ over the spectral region of interest.

The method is sketched in \cref{alg:spec_comp} and produces three quantities: $\Gamma_n$, $E_n$ and $V_n$. The simple idea of the method is a local search routine. If $ F_n(z)\leq 1/2$, we search within a radius $ F_n(z)$ around $z$ to minimise the approximated distance to the spectrum. This gives our best estimate of points in the spectrum near $z$ (the set $M_z$). The output $\Gamma_n(H)$ is then the collection of these local minimisers. $\Gamma_n(H)$ converges to the spectrum $\mathrm{Sp}(H)$ of the full infinite-dimensional operator as $n\rightarrow\infty$ (for suitable $r_n\rightarrow\infty$). This convergence is free from the edge states/spectral pollution that are associated with any artificial or numerical truncation. In other words, we compute $\mathrm{Sp}(H)$, and only $\mathrm{Sp}(H)$. Note that in the examples of this paper, $\mathrm{Sp}(H)$ does include {spectrum associated to} edge states of the full Hamiltonian $H$. The algorithm also outputs an error bound $E_n$ that satisfies
\begin{equation}
\label{spec_err_controlll}
\sup_{z\in\Gamma_n(H)} \mathrm{dist}(z,\mathrm{Sp}(H))\leq E_n\quad \text{with}\quad \lim_{n\rightarrow\infty}E_n=0.
\end{equation}
For an accuracy $\delta>0$, we simply increase $n$ until $E_n\leq\delta$. The final quantity $V_n$ consists of the approximate states corresponding to the output $\Gamma_n$. The approximate eigenstate $v_n(z)$ satisfies
$$
\|(\widetilde{H}-z)v_n(z)\|=\|P_{f(n)}(\widetilde{H}-z)P_nv_n(z)\|=F_n(z)\leq E_n,
$$
up to numerical errors. For an interval arithmetic implementation of this algorithm (allowing verified error bounds) and extensions to partial differential operators, see \cite{colbrook2019b}. We can also verify the spectral content of these approximate eigenstates by computing their spectral measure, see \cref{Num_spec_meas}.

\subsection{Computation of scalar spectral measures}
\label{Num_spec_meas}

Associated with the Hamiltonian $H$ is a projection-valued measure, $\mathcal{E}$, whose existence is guaranteed by the spectral theorem \cite[Theorem VIII.6]{reed1980} and whose support is the spectrum $\mathrm{Sp}(H)$. This diagonalises $H$, even when there does not exist a basis of normalisable eigenstates (recall that we are working in an infinite-dimensional Hilbert space):
\begin{equation}\label{eqn:PVMdiag}
 H=\inty{\mathrm{Sp}(H)}{}{\lambda}{\mathcal{E}(\lambda)}.
\end{equation}
In finite dimensions, {or when $H$ is compact or has compact resolvent,} $\mathcal{E}$ consists of a sum of Dirac measures, located at the eigenvalues of $H$, whose values are the corresponding projections onto eigenspaces. More generally, however, there may be a continuous component of the spectrum and spectral measure.

The key ingredient that allows approximations of $\mathcal{E}$ to be computed is the formula for the resolvent
\begin{equation}\label{eqn:PVMres}
( H-z)^{-1}=\inty{\mathrm{Sp}(H)}{}{\frac{1}{\lambda-z}}{\mathcal{E}(\lambda)}.
\end{equation}
In \cite{colbrook2019computing}, it is shown how to compute the action of the resolvent with error control via the rectangular truncations $\smash{P_{f(n)}(\widetilde{H}-z)P_n}$ and solving the resulting overdetermined linear system in the least squares sense. The residual converges to zero as $n\rightarrow\infty$ and can be used to provide the needed error bounds through an adaptive selection of $n$ \cite[Theorem 2.1]{colbrook2019computing}. Using this, we compute a smoothed approximation of $\mathcal{E}$ via convolution with a rational kernel $K_\epsilon$ for smoothing parameter $\epsilon>0$.

We explain the method for the important case of scalar-valued measures, before discussing the case of spectral projections in \cref{Num_spec_proj}. The spectral measure of $H$ with respect to $\psi\in\mathcal{H}$ is a scalar measure defined as $\mu_\psi(\Omega):=\langle\mathcal{E}(\Omega)\psi,\psi\rangle$. Lebesgue's decomposition of $\mu_\psi$~\cite{stein2009real} gives
\begin{equation}\label{eqn:spec_meas}
\dee\mu_\psi(y)= \underbrace{\sum_{\lambda\in\mathrm{Sp}_{{\rm p}}(H)}\langle\mathcal{P}_\lambda \psi,\psi\rangle\,\delta({y-\lambda})\dee y}_{\text{discrete part}}+\underbrace{\rho_\psi(y)\,\dee y +\dee\mu_\psi^{(\mathrm{sc})}(y)}_{\text{continuous part}}.
\end{equation}
The discrete part of $\mu_\psi$ is a sum of Dirac delta distributions on the set of eigenvalues of $H$, which we denote by $\mathrm{Sp}_{{\rm p}}(H)$. The coefficient of each $\delta$ in the sum is $\langle\mathcal{P}_\lambda \psi,\psi\rangle=\|\mathcal{P}_\lambda \psi\|^2$, where $\mathcal{P}_\lambda$ is the orthogonal spectral projector associated with the eigenvalue $\lambda$. The continuous part of $\mu_\psi$ consists of an absolutely continuous part with Radon--Nikodym derivative $\rho_\psi\in L^1(\mathbb{R})$ and a singular continuous component $\smash{\mu_\psi^{(\mathrm{sc})}}$.

We evaluate smoothed approximations of $\mu_\psi$ via a function $g_\epsilon$, with smoothing parameter $\epsilon>0$, that converges weakly to $\mu_\psi$~\cite[Ch.~1]{billingsley2013convergence}. That is, 
\[
\inty{\mathbb{R}}{}{\phi(y)g_\epsilon(y)}{y}\rightarrow \inty{\mathbb{R}}{}{\phi(y)}{\mu_\psi(y)}, \qquad\text{as}\qquad\epsilon\downarrow 0,
\]
for any bounded, continuous function $\phi$. The classical example of this is Stone's formula which corresponds to convolution with the Poisson kernel
\begin{equation}\label{jbjibfvbq}
g_\epsilon(x)=\frac{1}{2\pi i}\left\langle\left[(H-(x+i\epsilon))^{-1}-( H-(x-i\epsilon))^{-1}\right]\psi,\psi\right\rangle=\inty{\mathbb{R}}{}{\frac{\epsilon\pi^{-1}}{(x-\lambda)^2+\epsilon^2}}{\mu_\psi(\lambda)}.
\end{equation}
As $\epsilon\downarrow 0$, this approximation converges weakly to $\mu_\psi$. However, for a given truncation size, if $\epsilon$ is too small the approximation of \eqref{jbjibfvbq} via $\smash{P_{f(n)}(\widetilde{H}-z)P_n}$ (described above) becomes unstable due to the truncation of $\widetilde{H}$. There is an increased computational cost for smaller $\epsilon$, which typically requires larger truncation parameters. Since we want to approximate spectral properties without finite-size effects, it is advantageous to replace the Poisson kernel with higher-order rational kernels developed in \cite{colbrook2020}. These kernels have better convergence rates as $\epsilon\downarrow 0$, allowing a larger $\epsilon$ to be used for a given accuracy, thus leading to a lower computational burden. We use the high-order kernel machinery developed in \cite{colbrook2020}, where the following definition is made.

\begin{definition}[$m$th order kernel]
\label{def:mth_order_kernel}
Let $m\in\mathbb{N}$ and $K\in L^1(\mathbb{R})$. We say $K$ is an $m$th order kernel if:
\begin{itemize}
	\item[(i)] Normalised: $\inty{\mathbb{R}}{}{K(x)}{x}=1$.
	\item[(ii)]  Zero moments: $K(x)x^j$ is integrable and $\inty{\mathbb{R}}{}{K(x)x^j}{x}=0$ for $0<j<m$.
	\item[(iii)] Decay at $\pm\infty$: There is a constant $C_K$, such that $
	\left|K(x)\right|\leq {C_K}{(1+\left|x\right|)^{-(m+1)}}$, $\forall x\in \mathbb{R}.$
\end{itemize}
\end{definition}

We set $K_{\epsilon}(\cdot)=\epsilon^{-1}K(\cdot /\epsilon)$ to obtain an approximate identity. High-order kernels can be constructed using rational functions as follows. Let $\{a_j\}_{j=1}^m$ be distinct points in the upper half plane and suppose that the constants $\{\alpha_j\}_{j=1}^m$ satisfy the following (transposed) Vandermonde system:
\begin{equation}\label{eqn:vandermonde_condition}
\begin{pmatrix}
1 & \dots & 1 \\
a_1 & \dots & a_{m} \\
\vdots & \ddots & \vdots \\
a_1^{m-1} &  \dots & a_{m}^{m-1}
\end{pmatrix}
\begin{pmatrix}
\alpha_1 \\ \alpha_2\\ \vdots \\ \alpha_m
\end{pmatrix}
=\begin{pmatrix}
1 \\ 0 \\ \vdots \\0
\end{pmatrix}.
\end{equation}
Then the kernel
\begin{equation}\label{eqn:high_order_kernel}
K(x)=\frac{1}{2\pi i}\sum_{j=1}^m\frac{\alpha_j}{x-a_j}-\frac{1}{2\pi i}\sum_{j=1}^m\frac{\overline{\alpha_j}}{x-\overline{a_j}},
\end{equation}
is an $m$th order kernel, and we have the following generalisation of Stone's formula
\begin{align}
\label{gen_stone_comp}
[K_{\epsilon}*\mu_\psi](x)=&\frac{-1}{2\pi i}\sum_{j=1}^{m}\left\langle\left[\alpha_j (H-(x-\epsilon a_j))^{-1}-\bar\alpha_j (H-(x-\epsilon \bar a_j))^{-1}\right]\psi,\psi\right\rangle\notag\\
=&\frac{-1}{\pi}\sum_{j=1}^{m}{\rm Im}\left(\alpha_j\,\langle (H-(x-\epsilon a_j))^{-1}\psi,\psi \rangle\right).
\end{align}
This convolution converges with $m$th order of convergence in $\epsilon$ (up to a logarithmic factor and for sufficiently smooth $\mu_\psi$) \cite{colbrook2020}. The second line of \eqref{gen_stone_comp} follows from the conjugate symmetry of the resolvent. Here, $\bar z$ denotes the complex conjugate of $z$ and $*$ represents convolution. As a natural extension of the Poisson kernel, whose two poles are at $\pm i$, we consider the choice $a_j={2j}/({m+1})-1+i$. We then determine the residues by solving the Vandermonde system in \eqref{eqn:vandermonde_condition}. The first six kernels are explicitly written down in \cref{tab_kernel} (taken from \cite{colbrook2020}).

Given an $m$th order rational kernel, defined by distinct poles $a_1,\dots,a_m$ in the upper half-plane, the resolvent-based framework for evaluating an approximation of the spectral measure $\mu_{\psi}$ is summarised in \cref{alg:spec_meas}. This algorithm, which can be performed in parallel for several $x_0$, forms the foundation of \texttt{SpecSolve} \cite{SpecSolve_code}. In practice, the resolvent in~\cref{alg:spec_meas} is discretised before being applied. We compute an accurate value of $\mu_{\psi}^\epsilon$ provided that the resolvent is applied with sufficient accuracy, which can be done {\em{}adaptively} with {\em{}a posteriori} error bounds~\cite{colbrook2019computing}. For an efficient adaptive implementation, \texttt{SpecSolve} constructs a fixed discretisation, solves linear systems at each required complex shift, and checks the approximation error at each shift. If further accuracy is needed at a subset of the shifts, then the discretisation is refined geometrically, applied at these shifts, and the error is recomputed. This process is repeated until the resolvent is computed accurately at all shifts.

\subsection{Computation of spectral projections}
\label{Num_spec_proj}

Given an interval $[a,b]\subset\mathrm{Sp}(H)$, a vector $\psi\in\mathcal{H}$, and an $m$th order kernel $K$, the identity
\begin{equation}
\label{gen_stone_comp2}
[K_{\epsilon}*\mathcal{E}](x)=\frac{-1}{2\pi i}\sum_{j=1}^{m}\left[\alpha_j (H-(x-\epsilon a_j))^{-1}-\bar\alpha_j (H-(x-\epsilon \bar a_j))^{-1}\right]
\end{equation}
allows us to approximate $\mathcal{E}([a,b])\psi$ by solving shifted linear system of the form $(H-z)u=\psi$. Employing a quadrature rule with weights $w_1,\ldots,w_\ell$ and nodes $x_1,\ldots,x_\ell$, we form the approximation
\begin{equation}\label{eqn:pvm1}
\inty{a}{b}{[K_{\epsilon}*\mathcal{E}](x)}{x}\approx \frac{-1}{2\pi i}\sum_{\ell=1}^Nw_\ell\sum_{j=1}^{m}\left[\alpha_j (H-(x_\ell-\epsilon a_j))^{-1}-\bar\alpha_j (H-(x_\ell-\epsilon \bar a_j))^{-1}\right].
\end{equation}
The following generalisation of Stone's formula establishes that the approximation converges in the limit $\epsilon\rightarrow 0$, up to contributions from atoms of $\mathcal{E}$ at the endpoints. It is convenient to distinguish between the real and imaginary parts of the residues explicitly, so we denote $\alpha_j=\beta_j+i\gamma_j$.

\begin{theorem}\label{thm:Stones_formula_generalized}
Given a projection-valued measure $\mathcal{E}$ (see~\eqref{eqn:PVMdiag}) and $m$th order kernel $K$ with conjugate pole pairs (see~\eqref{eqn:high_order_kernel}), for any $[a,b]\subset\mathbb{R}$ we have that
$$
\lim_{\epsilon\rightarrow 0^+}\inty{a}{b}{[K_{\epsilon}*\mathcal{E}](x)}{x} = \mathcal{E}((a,b))+c_l\mathcal{E}(\{a\})+c_r\mathcal{E}(\{b\}),
$$
where $c_l=\smash{\pi^{-1}\sum_{j=1}^m\beta_j(\pi-\arg(a_j))+i\gamma_j\log|a_j|}$ and $c_r=\smash{\pi^{-1}\sum_{j=1}^m\beta_j\arg(a_j)-i\gamma_j\log|a_j|}$. Moreover, if the poles are symmetric about the imaginary axis so that $a_{m+1-j}=-\bar a_j$, then $c_l=c_r=1/2$.
\end{theorem}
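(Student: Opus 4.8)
The plan is to reduce the operator identity to a family of scalar integrals via the spectral theorem, and then to extract the endpoint constants by evaluating the kernel's half-line integrals in closed form. First I would apply both sides to a fixed vector $\psi\in\mathcal{H}$ and work with the finite positive measure $\mu_\psi(\cdot)=\langle\mathcal{E}(\cdot)\psi,\psi\rangle$ (and, for the interchange below, the matrix-element measures $\mu_{\psi,\phi}(\cdot)=\langle\mathcal{E}(\cdot)\psi,\phi\rangle$). The summand-by-summand resolvent formula shows that $[K_\epsilon*\mathcal{E}](x)=\int_{\mathbb{R}}K_\epsilon(x-\lambda)\,d\mathcal{E}(\lambda)$, which is precisely the identity used in \eqref{gen_stone_comp2}. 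Fubini's theorem, applied at the level of the matrix elements to the product of Lebesgue measure on $[a,b]$ with $\mu_{\psi,\phi}$, then lets me interchange the two integrations, giving $\int_a^b[K_\epsilon*\mathcal{E}](x)\,dx\,\psi=\int_{\mathbb{R}}\Phi_\epsilon(\lambda)\,d\mathcal{E}(\lambda)\,\psi$ with the scalar profile $\Phi_\epsilon(\lambda)=\int_a^b K_\epsilon(x-\lambda)\,dx$. The substitution $t=(x-\lambda)/\epsilon$ turns this into $\Phi_\epsilon(\lambda)=\int_{(a-\lambda)/\epsilon}^{(b-\lambda)/\epsilon}K(t)\,dt$, which is bounded uniformly in $\lambda$ and $\epsilon$ by $\|K\|_{L^1}$ (finite since $K$ is an $m$th order kernel, so property (iii) of \cref{def:mth_order_kernel} gives integrable decay for $m\ge1$).

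Next I would take $\epsilon\to0^+$ pointwise in $\lambda$. Using the normalisation $\int_{\mathbb{R}}K=1$ together with the decay of $K$, the limits of the integration endpoints give $\Phi_\epsilon(\lambda)\to1$ for $\lambda\in(a,b)$, $\Phi_\epsilon(\lambda)\to0$ for $\lambda\notin[a,b]$, and the two boundary values $\Phi_\epsilon(a)\to\int_0^\infty K(t)\,dt$ and $\Phi_\epsilon(b)\to\int_{-\infty}^0 K(t)\,dt$. Since $\|\int\Phi_\epsilon\,d\mathcal{E}\,\psi-\int\Phi\,d\mathcal{E}\,\psi\|^2=\int|\Phi_\epsilon-\Phi|^2\,d\mu_\psi$, with $|\Phi_\epsilon|\le\|K\|_{L^1}$ and $\mu_\psi$ finite, dominated convergence yields strong convergence to $\int_{\mathbb{R}}\Phi\,d\mathcal{E}=\mathcal{E}((a,b))+c_l\,\mathcal{E}(\{a\})+c_r\,\mathcal{E}(\{b\})$, where $c_l=\int_0^\infty K$ and $c_r=\int_{-\infty}^0 K$. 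This already produces the claimed structure; it remains to evaluate the two half-line integrals.

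Finally I would compute $c_l$ and $c_r$ explicitly. Writing $K(x)=\pi^{-1}\sum_j\mathrm{Im}\!\big(\alpha_j/(x-a_j)\big)$ (conjugate symmetry of \eqref{eqn:high_order_kernel}) and $a_j=p_j+iq_j$ with $q_j>0$, each summand splits into a $\beta_j$-part $\beta_j q_j[(x-p_j)^2+q_j^2]^{-1}$ and a $\gamma_j$-part $\gamma_j(x-p_j)[(x-p_j)^2+q_j^2]^{-1}$. The $\beta_j$-parts integrate to arctangents, producing the $\arg(a_j)$ contributions, while the $\gamma_j$-parts integrate to logarithms, producing the $\log|a_j|$ contributions. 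The logarithmic pieces diverge individually at infinity, so the crucial point is that these divergences cancel after summation because $\sum_j\gamma_j=\mathrm{Im}\big(\sum_j\alpha_j\big)=\mathrm{Im}(1)=0$, the latter being the top row of the Vandermonde system \eqref{eqn:vandermonde_condition}; I would make this rigorous by truncating at $\pm R$ and letting $R\to\infty$ only after summing over $j$. For the symmetric case $a_{m+1-j}=-\bar a_j$, conjugating \eqref{eqn:vandermonde_condition} and invoking uniqueness of its solution gives $\alpha_{m+1-j}=\bar\alpha_j$, so $\beta_{m+1-j}=\beta_j$ and $\gamma_{m+1-j}=-\gamma_j$; pairing the $j$ and $m+1-j$ terms (with $\arg a_{m+1-j}=\pi-\arg a_j$ and $|a_{m+1-j}|=|a_j|$) cancels the logarithmic contributions and yields $\sum_j\beta_j\arg(a_j)=\tfrac{\pi}{2}\sum_j\beta_j$, whence $c_l=c_r=\tfrac12$ using $\sum_j\beta_j=1$. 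The main obstacle is the interchange-of-limit step together with the conditionally convergent half-line integrals: care is needed both to pass $\epsilon\to0$ inside the spectral integral despite the discontinuity of the limiting profile at the endpoints $a,b$, and to handle the cancellation of the logarithmic divergences globally rather than term by term.
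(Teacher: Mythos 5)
Your proof is correct, and while it shares the paper's overall skeleton (Fubini to swap the $x$-integration with the spectral integral, dominated convergence to pass $\epsilon\rightarrow 0^+$, and a case analysis of the limiting scalar profile at interior, exterior, and endpoint values of $\lambda$), it differs from the paper's argument in two genuinely useful ways. First, your rescaling $t=(x-\lambda)/\epsilon$ identifies the endpoint constants \emph{a priori} as the half-line integrals $c_l=\int_0^\infty K(t)\,\mathrm{d}t$ and $c_r=\int_{-\infty}^0 K(t)\,\mathrm{d}t$, whereas the paper's \cref{thm:GSF1} keeps $\epsilon$ fixed, evaluates the inner $x$-integral in closed form via complex logarithms, and only then takes the limit. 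Your identification makes it transparent that $c_l+c_r=1$ and that both constants are \emph{real} (they are integrals of the real-valued kernel $K$ over half-lines); carrying out your arctangent/logarithm evaluation gives $c_l=\pi^{-1}\sum_{j}\left[\beta_j(\pi-\arg a_j)-\gamma_j\log|a_j|\right]$ and $c_r=\pi^{-1}\sum_{j}\left[\beta_j\arg a_j+\gamma_j\log|a_j|\right]$, which exposes what appears to be a typo in the theorem statement and in \eqref{eqn:inner_int}: the logarithmic terms should carry coefficients $\mp 1$ rather than $\pm i$. This discrepancy is immaterial for the symmetric kernels used in practice, since then $\sum_j\gamma_j\log|a_j|=0$ and both versions give $c_l=c_r=1/2$. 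Second, your derivation of the residue symmetry $\alpha_{m+1-j}=\bar\alpha_j$ --- conjugate the Vandermonde system \eqref{eqn:vandermonde_condition}, re-index using $\bar a_j=-a_{m+1-j}$, and invoke uniqueness of the solution --- is substantially shorter and cleaner than the paper's \cref{lem:GSF2}, which obtains the same identity through Cramer's rule and lengthy Vandermonde determinant manipulations. The remaining ingredients match the paper's: your truncation at $\pm R$ with summation over $j$ before letting $R\rightarrow\infty$ (using $\sum_j\gamma_j=0$ from the first row of \eqref{eqn:vandermonde_condition}) plays exactly the role of \eqref{eqn:log_terms1}--\eqref{eqn:log_terms2}, and your passage to the limit in the strong operator topology via $\bigl\|\int(\Phi_\epsilon-\Phi)\,\mathrm{d}\mathcal{E}\,\psi\bigr\|^2=\int|\Phi_\epsilon-\Phi|^2\,\mathrm{d}\mu_\psi$ is a precise rendering of the dominated convergence step the paper applies directly to the projection-valued integral.
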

\begin{proof}
See~\cref{sec:PVM_converge}.
\end{proof}

\begin{remark}[Contribution of singleton sets]
One can easily show via the dominated convergence theorem that
$$
\lim_{\epsilon\rightarrow 0^+}\frac{\epsilon}{2i}\left[(H-x-i\epsilon)^{-1}-( H-x+i\epsilon)^{-1}\right]=\mathcal{E}(\{x\}).
$$
Together with \cref{thm:Stones_formula_generalized}, this allows computation of $\mathcal{E}((a,b))$ and $\mathcal{E}([a,b])$.
\end{remark}

By analogy with scalar spectral measures, approximating projection-valued spectral measures with higher-order rational convolution kernels is computationally advantageous because they achieve comparable accuracy with larger $\epsilon$. By increasing the kernel order rather than decreasing $\epsilon$, the resolvent in~\eqref{eqn:pvm1} remains well-conditioned and is usually significantly cheaper to apply. In addition, we can reduce the computational cost further by leveraging the resolvent's analyticity in the upper and lower half-plane and deforming the contour of integration in~\eqref{eqn:pvm1} away from the spectrum (see~\cref{fig:compare_contours1} (left)).

\begin{figure}
\centering
\begin{minipage}[b]{0.48\textwidth}
\begin{overpic}[width=0.99\textwidth]{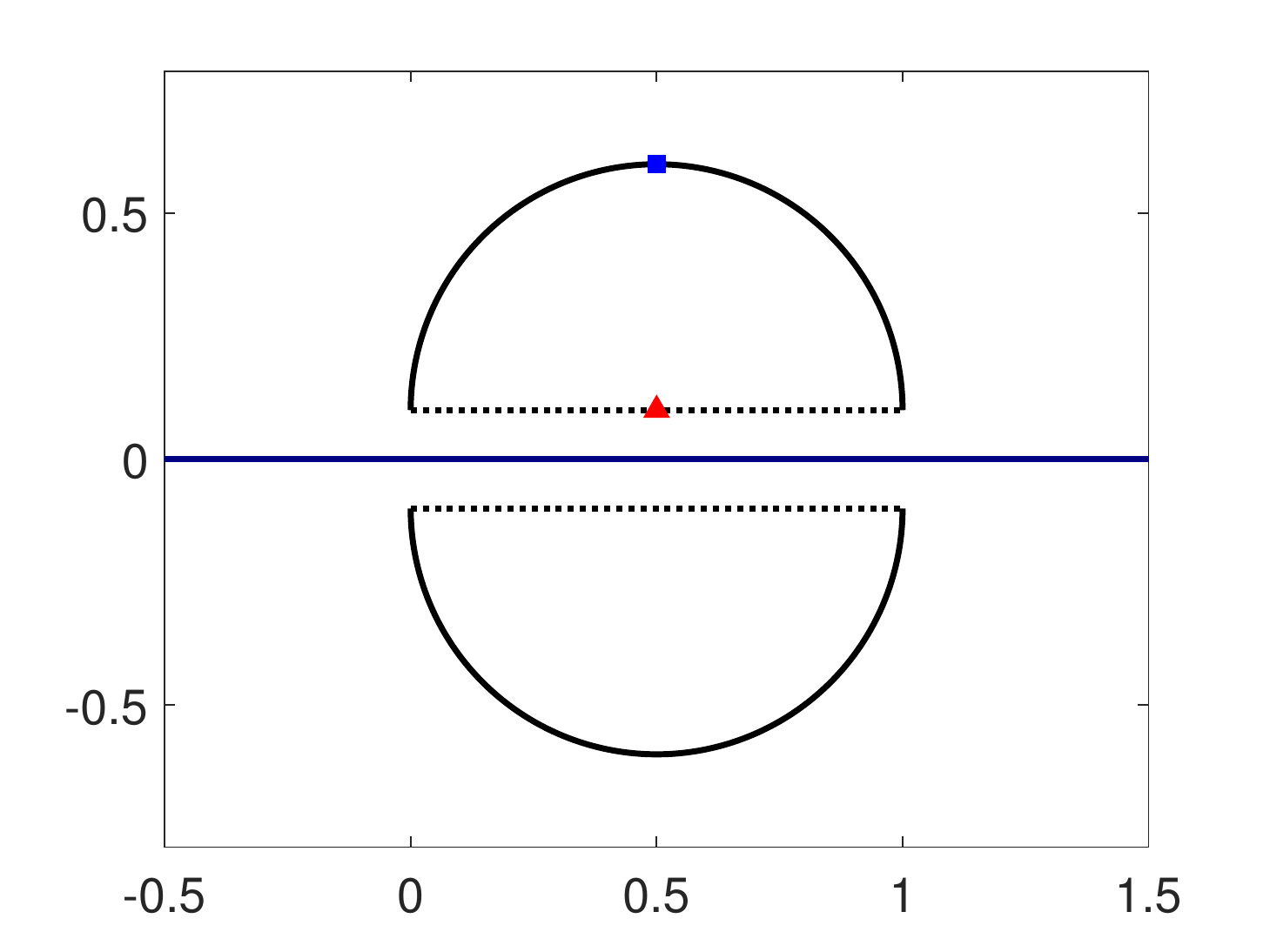}
\put (46,-2) {$\displaystyle {\rm Re}(z)$}
\put (2,33) {\rotatebox{90}{$\displaystyle {\rm Im}(z)$}}
\end{overpic}
\end{minipage}
\begin{minipage}[b]{0.48\textwidth}
\begin{overpic}[width=0.99\textwidth]{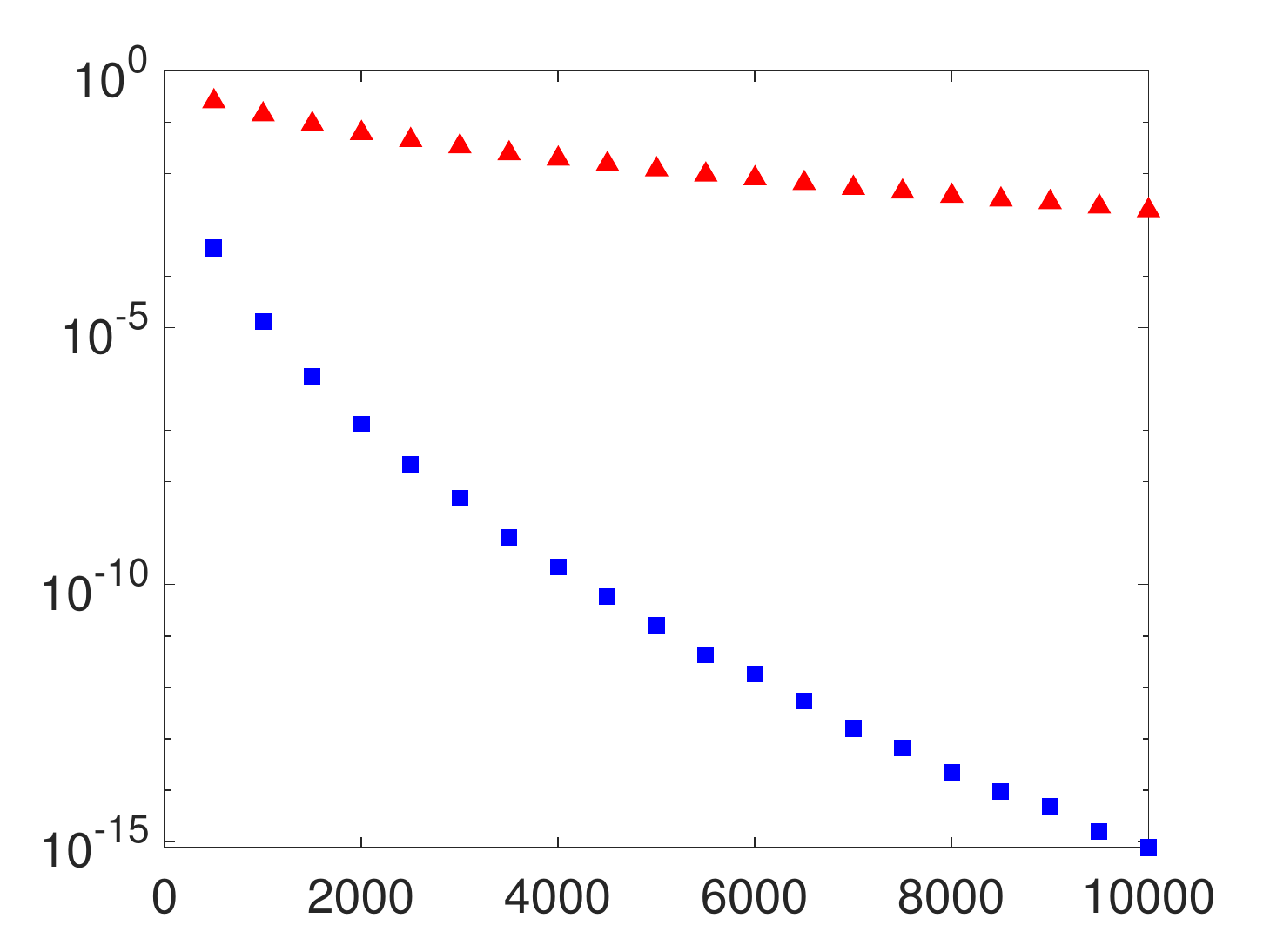}
\put (35,-2) {$\displaystyle n$ (truncation size)}
\put (37,72) {$\displaystyle \|v_n-v\|/\|v\|$}
\end{overpic}
\end{minipage}
\caption{Deforming the contour of integration away from the spectrum of $H$ in~\eqref{eqn:pvm1} alleviates the computational cost of computing the resolvent at the interior quadrature nodes. The left panel depicts two integration contours (solid and dashed lines) for the Poisson kernel ($m=1$) with smoothing parameter $\epsilon=0.1$. The right panel displays the relative approximation errors in the solutions of the truncated system $(P_{f(n)}\widetilde{H}P_n-z)v_n=e_1$ (see~\cref{sec:bijbibav}), where $e_1$ is the first canonical basis vector and $\widetilde{H}$ is the Hamiltonian of the bulk Haldane model (see~\cref{sec:bulk_Haldane}), corresponding to the two values of $z$ marked along the contours in the left panel (blue square and red triangle).}
\label{fig:compare_contours1}
\end{figure}

Consider the semi-circle contour connecting the points $a$ and $b$ and oriented in the clockwise direction, parametrised explicitly by $0\leq\theta\leq 1$ as
$$
z(\theta)=b+\frac{a-b}{2}(1+\exp(i\pi\theta)), \qquad\text{with}\qquad z'(\theta)=i\pi\frac{a-b}{2}\exp(i\pi\theta).
$$
Since the resolvent is analytic in $\mathbb{C}\setminus\mathrm{Sp}(H)$ and $a_1,\ldots,a_m$ lie in the upper half-plane, we may write
\begin{equation}\label{eqn:pvm2}
\begin{aligned}
\inty{a}{b}{[K_{\epsilon}*\mathcal{E}](x)}{x} &= \frac{-1}{2\pi i}\inty{0}{1}{\sum_{j=1}^{m}\left[\alpha_j(H-(z(\theta)-\epsilon a_j))^{-1}z'(\theta) - \bar\alpha_j(H-(\bar z(\theta)-\epsilon \bar a_j))^{-1}\bar z'(\theta)\right]}{z} \\
&\approx \frac{-1}{2\pi i}\sum_{\ell=1}^N\tilde w_\ell\sum_{j=1}^{m}\left[\alpha_j(H-(z(\theta_\ell)-\epsilon a_j))^{-1}z'(\theta_\ell) - \bar\alpha_j(H-(\bar z(\theta_\ell)-\epsilon \bar a_j))^{-1}\bar z'(\theta_\ell)\right].
\end{aligned}
\end{equation}
with quadrature weights $\tilde w_1,\ldots,\tilde w_N$ and nodes $\theta_1,\ldots,\theta_N$. From a computational standpoint,~\eqref{eqn:pvm2} improves two-fold on the formulation in~\eqref{eqn:pvm1}:
\begin{itemize}
	\item First, the resolvent is evaluated further from the spectrum and is typically well-approximated by smaller discretisations at many interior quadrature nodes (see~\cref{fig:compare_contours1} (right)).
	\item Second, the convergence rate of quadrature rules are improved because the integrand's region of analyticity is effectively enlarged  when the contour is deformed away from the spectrum (see~\cref{fig:compare_contours2})~\cite{hale2008new}. Consequently, fewer quadrature nodes are required to approximate the integral to a fixed tolerance.
\end{itemize}
Therefore, comparable accuracy is achieved while solving both fewer and smaller linear systems.
 
To compare the computational efficiency of the two contours with respect to the second point, we estimate the number of quadrature nodes required to achieve approximation error $0<\delta_*<1$. We consider spectral projection onto the interval $[0,1]$ (without loss of generality), Clenshaw--Curtis quadrature (CCQ), and an $m$th order rational kernel with equispaced poles $a_j={2j}/({m+1})-1+i$ (see~\cref{Num_spec_meas}). In this setting, Clenshaw--Curtis converges exponentially so that the quadrature approximation error is bounded by $\|E_N\|\leq C\rho^{-N}$, where $N$ is the number of quadrature nodes, $\rho>1$ is half the sum of the major and minor axes of any Bernstein ellipse $B_\rho$ with focii at $0$ and $1$ in which $[K_\epsilon*\mathcal{E}](x)$ is analytic, and $C>0$ is a constant proportional to $\sup_{z\in B_\rho}\|[K_\epsilon*\mathcal{E}](z)\|$. The minimal number of nodes required to achieve $\|E_N\|\leq\delta_*$ error is therefore $N\approx\log\left(C/\delta_*\right)/\log{\rho}$.

To estimate the convergence rate $\rho$ for each contour, suppose the singularities of $[K_\epsilon*\mathcal{E}](x)$ are determined precisely by the spectrum of $H$.\footnote{In fact, $[K_\epsilon*\mathcal{E}](x)$ may sometimes be analytically continued across the spectrum of $H$, in which case CCQ may converge faster than our analysis indicates for both contours.} For the flat contour (see~\eqref{eqn:pvm1}), the integrand is analytic between parallel lines displaced from the contour of integration by $\pm i\epsilon$ in the complex $x$-plane (see~\cref{fig:compare_contours2}, left). We consider the elliptic region of analyticity with minor axis $(\rho_1-\rho_1^{-1})/2=\epsilon$, so that
$$
\rho_1=\epsilon+\sqrt{\epsilon^2+1}=1+\mathcal{O}(\epsilon)\qquad\text{as}\qquad\epsilon\rightarrow 0.
$$
For the deformed contour (see~\eqref{eqn:pvm2}), the integrand's region of analyticity is bounded by the curves in the complex $\theta$-plane defined by $z(\theta)-\epsilon a_j\in{\rm Sp}(H)$ for $j=1,\ldots, m$ (see~\cref{fig:compare_contours2}, right). Here, we may take the Bernstein ellipse with major axis $(\rho_2+\rho_2^{-1})/2=1+\epsilon/2$, so that
$$
\rho_2=1+{\epsilon}/{2}+\sqrt{\left(1+{\epsilon}/{2}\right)^2-1}=1+\mathcal{O}(\sqrt{\epsilon}),\qquad\text{as}\qquad\epsilon\rightarrow 0.
$$
Since $H$ is self-adjoint, $\sup_{z\in B_{\rho}}\|[K_\epsilon*\mathcal{E}](z)\|$ grows in inverse proportion to ${\rm dist}(B_\rho,{\rm Sp}(H))$ as does the constant $C$. For both contours, our choice of $B_\rho$ yields $C=\mathcal{O}(\epsilon^{-1})$. Therefore, we conclude that the number of quadrature nodes required on each contour is, as $\epsilon\rightarrow 0$,
$$
N_1\approx\frac{\log\left(C/\delta_*\right)}{\log{\rho_1}}=\mathcal{O}\left(\epsilon^{-1}\log(\epsilon^{-1}\delta_*^{-1})\right), \qquad\text{and}\qquad N_2\approx\frac{\log\left(C/\delta_*\right)}{\log{\rho_2}}=\mathcal{O}\left(\epsilon^{-\frac{1}{2}}\log(\epsilon^{-1}\delta_*^{-1})\right).
$$
The deformed contour improves on~\eqref{eqn:pvm1} by requiring a factor of up to $\mathcal{O}(\sqrt{\epsilon})$ fewer CCQ nodes. This analysis also reveals the further benefit of a reduced number of quadrature nodes when increasing $\epsilon$ using high-order kernels.\footnote{Under certain smoothness conditions we can take $\epsilon=\mathcal{O}(\delta_*^{1/m})$ up to logarithmic factors.}

\begin{figure}
\centering
\begin{minipage}[b]{0.48\textwidth}
\begin{overpic}[width=0.99\textwidth,clip,trim={10mm 20mm 10mm 23mm}]{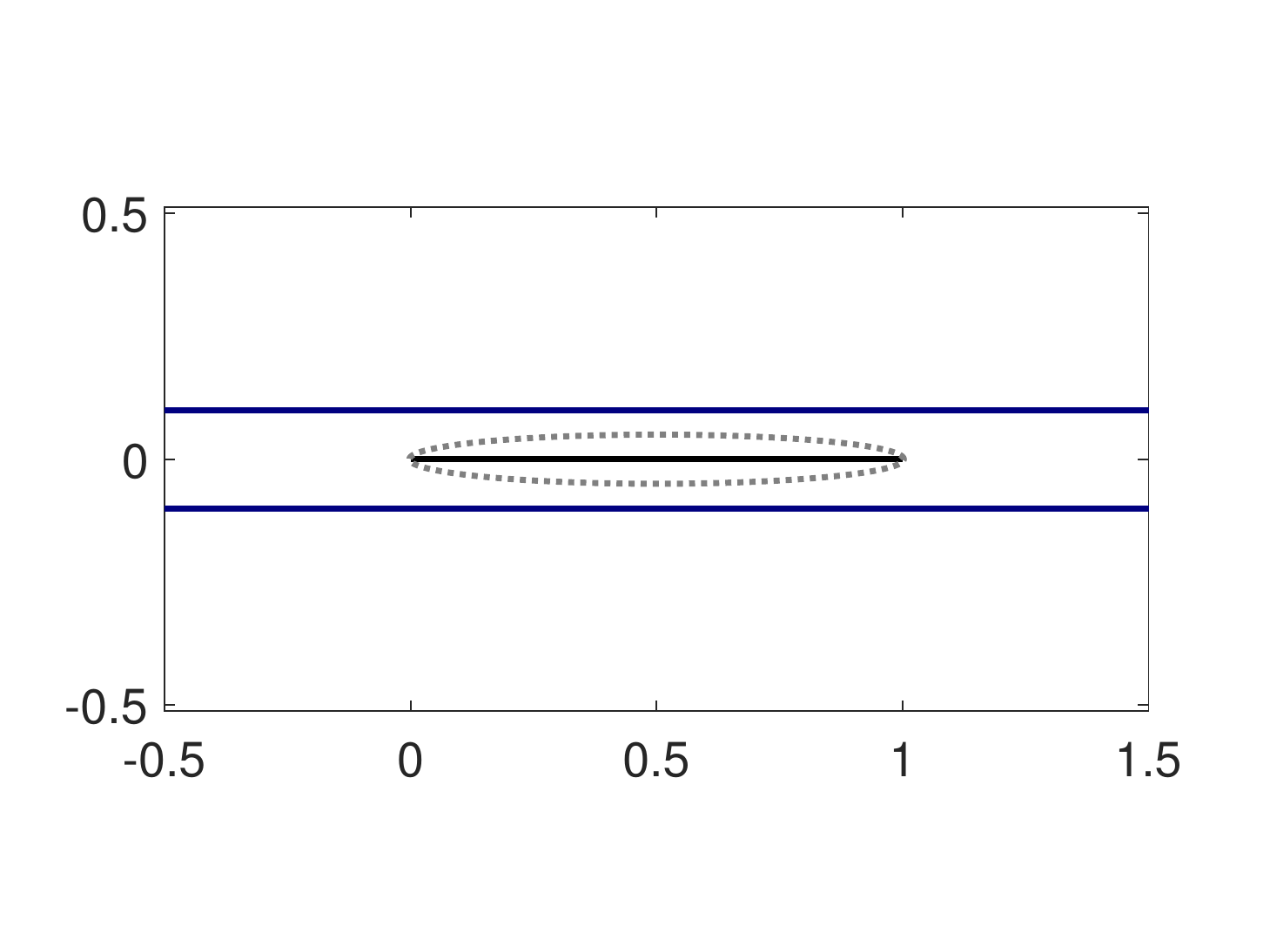}
\put (41,47) {$\displaystyle \rho_1=1.105$}
\put (46,-5) {$\displaystyle {\rm Re}(x)$}
\put (-1.5,25) {\rotatebox{90}{$\displaystyle {\rm Im}(x)$}}
\end{overpic}
\end{minipage}
\begin{minipage}[b]{0.48\textwidth}
\begin{overpic}[width=0.99\textwidth,clip,trim={10mm 20mm 10mm 23mm}]{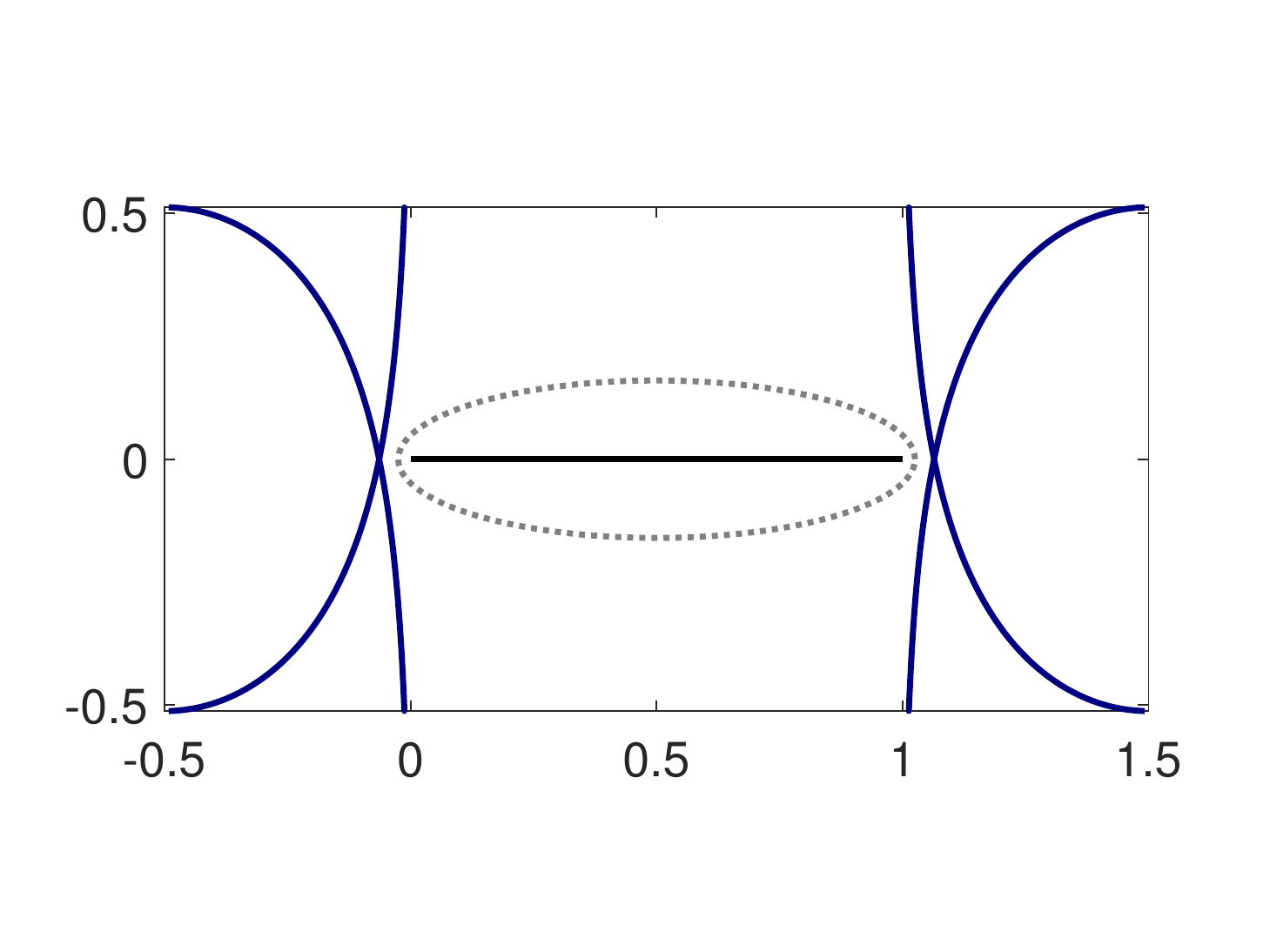}
\put (40,47) {$\displaystyle \rho_2=1.3702$}
\put (46,-5) {$\displaystyle {\rm Re}(\theta)$}
\put (-1.5,25) {\rotatebox{90}{$\displaystyle {\rm Im}(\theta)$}}
\end{overpic}
\end{minipage}
\caption{Bernstein ellipses (dashed grey contours) for the integrands in~\eqref{eqn:pvm1} (left) and in~\eqref{eqn:pvm2} (right) with $m=1$ and $\epsilon=0.1$. The integrand in~\eqref{eqn:pvm2} is analytic in a larger ellipse because the spectrum of $H$ is effectively deformed away (blue lines) from the integration contour (black line). The ellipse parameters $\rho_1$ and $\rho_2$ govern convergence rates for the Clenshaw--Curtis quadrature approximations in~\eqref{eqn:pvm1} and~\eqref{eqn:pvm2}, respectively.}
\label{fig:compare_contours2}
\end{figure}

\subsection{Computing transport properties and the functional calculus}
\label{Num_FC}

For the computation of general semigroups with error control using rectangular truncations, we refer the reader to \cite{colbrook2021semigroups,colbrook2021semigroupsfrac}. Related to the method we adopt here, many works use contour methods to invert the Laplace transform and solve time evolution problems, with a focus on parabolic PDEs \cite{gavrilyuk2001exponentially,weideman2010improved,weideman2007parabolic,lopez2006spectral,dingfelder2015improved,mclean2004time,sheen2003parallel,gavrilyuk2011exponentially}. An excellent survey of contour methods is provided in \cite{trefethen2014exponentially}.

In our case, the relevant Hamiltonians are bounded and the procedure is considerably simplified. For a holomorphic function $g$, Cauchy's integral formula yields
\begin{equation}
g(H)=\frac{1}{2\pi i}\inty{\gamma}{}{g(z)(H-z)^{-1}}{z},
\end{equation}
where $\gamma$ is a closed contour looping once around the spectrum. Transport properties are computed via the choice $g(z)=\exp(-i zt)$. Namely, given an initial wavefunction $\psi_0$, we wish to compute
\begin{equation}\label{func_calc_desc}
\psi(t)=\exp(-i Ht)\psi_0=\frac{1}{2\pi i}\inty{\gamma}{}{\exp(-i zt)\left[(H-z)^{-1}\psi_0\right]}{z}.
\end{equation}
The contour integral is computed using quadrature and approximations of the resolvent $(H-z)^{-1}$ via rectangular truncations as above. In particular, the rectangular truncation of the Hamiltonian is chosen adaptively through \textit{a posteriori} error bounds. This allows us to perform rigorous computations with error control that are guaranteed to be free from finite-size or truncation/discretisation effects, directly probing the transport properties of the infinite lattice. It is difficult to achieve error control or computations free from truncation effects via other methods since it can be challenging to predict how large the truncation needs to be \textit{a priori}.

Suppose that the spectrum is located in an interval $[a,b]\subset\mathbb{R}$. We take $\gamma$ to be a rectangular contour split into four line segments: two parallel to the imaginary axis with real parts $a-1$ and $b+1$ and two parallel to the real axis with imaginary parts $\pm \eta$ ($\eta>0$). Along these line segments we apply Gaussian quadrature with enough quadrature nodes for the desired accuracy (the number of nodes can be found by bounding the analytic integrand). Suppose that the weights and nodes for the quadrature rule applied to the whole of $\gamma$ are $\{w_j\}_{j=1}^N$ and $\{z_j\}_{j=1}^N$. Then the approximation of \eqref{func_calc_desc} is given by
\begin{equation}
\psi(t)\approx \sum_{j=1}^N \frac{w_j}{2\pi i} \exp(-i z_jt)\left[(H-z_j)^{-1}\psi_0\right].
\end{equation}
The vectors $(H-z_j)^{-1}\psi_0$ are computed using the adaptive method, which can be performed in parallel across the quadrature nodes. We also reuse these computed vectors for different times $t$. Numerically, this requires $\eta$ to not be too large due to the growth of the complex exponential in the complex plane. Suitable $N$ can be selected for a finite interval of desired times $t$.

\section{The Haldane model} \label{sec:Haldane}

In this work, we apply the methods just described to the Haldane model \cite{1988Haldane}. The Haldane model describes electrons hopping on a two-dimensional honeycomb lattice (\cref{fig:honeycomb}) in the presence of a periodic magnetic field with zero net flux. In this section, we first present basic features of the periodic bulk and edge Haldane models and their Bloch reductions in~\cref{sec:bulk_Haldane,sec:edge_Haldane}. We do this only for the reader's convenience since excellent reviews already exist in the literature \cite{2013FruchartCarpentier,2019MarcelliMonacoMoscolariPanati}. We then describe how we model defects and disorder in~\cref{sec:defects_and_disorder}. We then discuss edge states, and bulk and edge conductances, in~\cref{sec:bulk_conductance,sec:edge_conductance}, before making our computational goals more precise in~\cref{sec:comp_goals}. To improve readability, we postpone some long formulas to~\cref{sec:Haldane_details}.

\begin{figure}
\centering
\begin{tabular}{c}
\begin{overpic}[width=0.49\textwidth,clip,trim={0mm 0mm 0mm 0mm}]{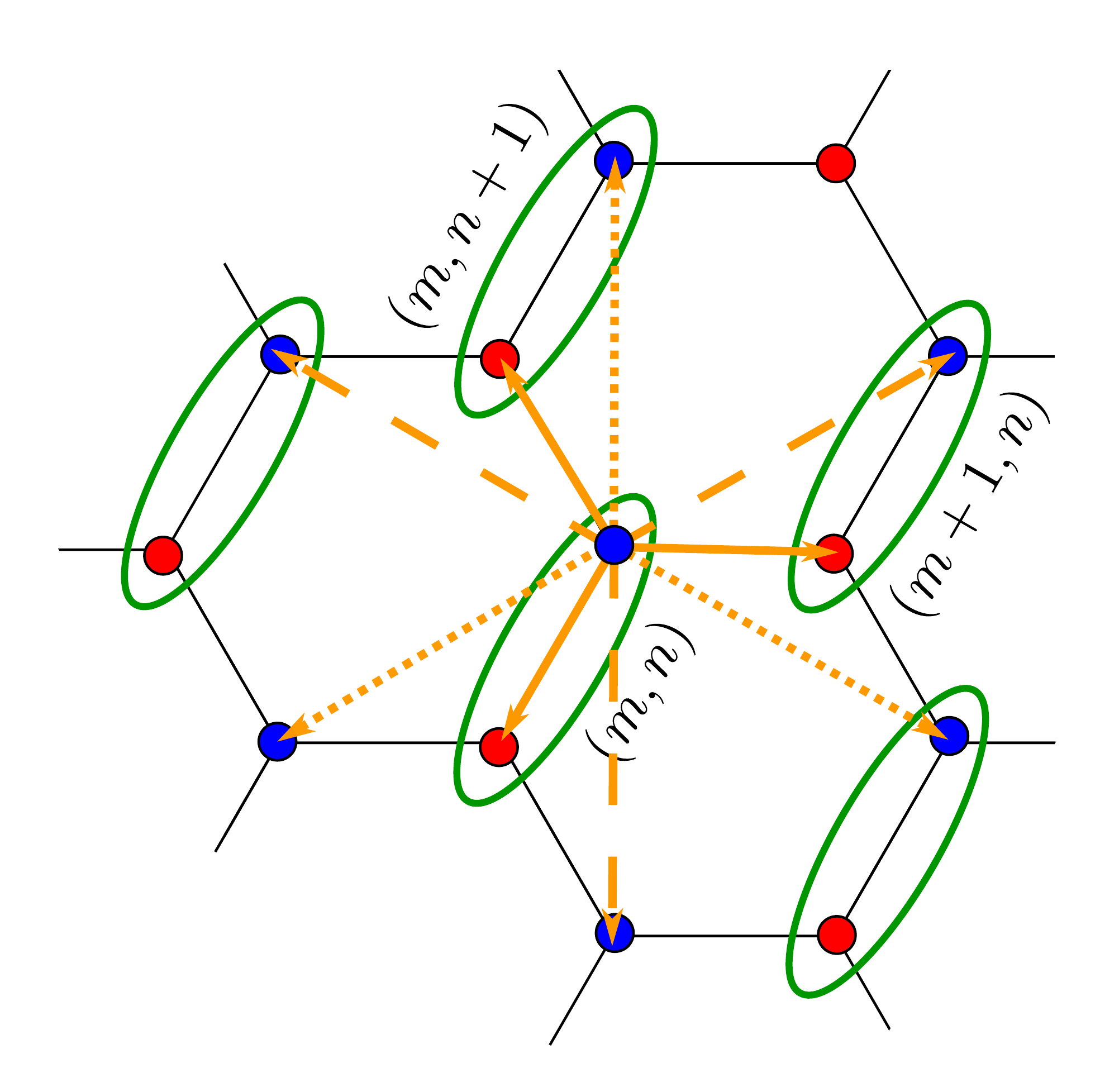}
\end{overpic}
\end{tabular}
    \caption{Illustration of hopping terms of the Haldane model \eqref{eq:inf_H_0}. Red and blue circles denote the $A$ and $B$ sublattices, respectively, and orange lines denote hopping starting at the $B$ site in the $(m,n)$th cell. Solid orange lines show nearest-neighbour hoppings, while dashed orange lines show next-nearest-neighbour hoppings. Short orange dashes correspond to next-nearest-neighbour hoppings with phase $e^{- i \phi}$, while long orange dashes correspond to next-nearest-neighbour hoppings with phase $e^{i \phi}$.} 
\label{fig:honeycomb}
\end{figure}

\subsection{The periodic bulk Haldane model}\label{sec:bulk_Haldane}

We model electrons in the bulk of the material as elements of the Hilbert space $\ell^2(\mathbb{Z}^2;\mathbb{C}^2)$, written $\psi : \vec{m} \mapsto \psi_{\vec{m}}$, where $\vec{m} = (m,n)$ and $\psi_{\vec{m}} = \left( \psi_{\vec{m}}^A, \psi_{\vec{m}}^B \right)^\top$. The quantity $| \psi_{\vec{m}}^\upsilon |^2$ is then the electron probability density on site $\upsilon\in \{A,B\}$ in the $\vec{m}$th cell. The bulk Haldane Hamiltonian $H_{B}$ is then \cite{1988Haldane}
\begin{equation} \label{eq:inf_H_0}
\begin{split}
    (H_{B} \psi)_{\vec{m}} := &\; t \begin{pmatrix} \psi_{m,n}^B + \psi_{m-1,n}^B + \psi_{m,n-1}^B \\ \psi_{m,n}^A + \psi_{m+1,n}^A + \psi_{m,n+1}^A \end{pmatrix} + V \begin{pmatrix} \psi^A_{\vec{m}} \\ - \psi^B_{\vec{m}} \end{pmatrix} \\
    &+ t' \begin{pmatrix} e^{i \phi} \left( \psi_{m,n+1}^A + \psi_{m-1,n}^A + \psi_{m+1,n-1}^A \right) + e^{- i \phi} \left( \psi^A_{m,n-1} + \psi^A_{m+1,n} + \psi^A_{m-1,n+1} \right) \\ e^{i \phi} \left( \psi^B_{m,n-1} + \psi^B_{m+1,n} + \psi^B_{m-1,n+1} \right) + e^{- i \phi} \left( \psi^B_{m,n+1} + \psi^B_{m-1,n} + \psi^B_{m+1,n-1} \right) \end{pmatrix}.
\end{split}
\end{equation}
Here, $t$ and $t' \in \field{R}$ are hopping amplitudes between nearest-neighbours and next-nearest-neighbours in the lattice, while $V \in \field{R}$ is a potential difference between sublattices, and $\phi \in [0,2\pi)$ is the complex phase of the next-nearest-neighbour hopping. Nearest-neighbour and next-nearest-neighbour hoppings are shown in \cref{fig:honeycomb}. When $t' \neq 0$ and $\phi \notin \{0,\pi\}$, the next-nearest-neighbour hoppings model a periodic non-zero magnetic flux through the material, whose average over any unit cell is zero.

The bulk Hamiltonian $H_{B}$ \eqref{eq:inf_H_0} is invariant under translations with respect to both components of $\vec{m}$, and can therefore be diagonalised via the Fourier transform \cite{ashcroft_mermin,kuchment,reed_simon_4}. Let $\Gamma^* := [0,2 \pi)^2$, then we write elements of $L^2(\Gamma^*;\mathbb{C}^2)$ as $\oldhat{\psi}: \vec{k} \mapsto \oldhat{\psi}(\vec{k})$, where $\vec{k} := (k_1,k_2)$ and $\oldhat{\psi}(\vec{k}) = \left( \oldhat{\psi}^A(\vec{k}), \oldhat{\psi}^B(\vec{k}) \right)^\top$. We introduce the Fourier transform $\mathcal{F} : \ell^2( \mathbb{Z}^2 ; \mathbb{C}^2 ) \rightarrow L^2( \Gamma^*;\mathbb{C}^2)$ and its inverse
\begin{equation} \label{eq:Fourier}
    \left( \mathcal{F} \psi \right)(\vec{k}) := \sum_{\vec{m} \in \mathbb{Z}^2} e^{- i \vec{k} \cdot \vec{m}} \psi_{\vec{m}}, \quad \left( \mathcal{F}^{-1} \oldhat{\psi} \right)_{\vec{m}} := \frac{1}{|\Gamma^*|} \inty{\Gamma^*}{}{ e^{i \vec{k} \cdot \vec{m}} \oldhat{\psi}(\vec{k}) }{\vec{k}}.
\end{equation}
Under the transformation \eqref{eq:Fourier}, the operator \eqref{eq:inf_H_0} takes the form \cite{1988Haldane}
\begin{equation} \label{eq:Bloch_bulk}
\begin{split}
    &\left( \left( \mathcal{F} H_{B} \mathcal{F}^{-1} \right) \oldhat{\psi} \right)(\vec{k}) = \oldhat{H}_{B}(\vec{k}) \oldhat{\psi}(\vec{k}), \\
    &\oldhat{H}_{B}(\vec{k}) := \begin{pmatrix} V + t' e^{i \phi} \left( e^{i k_2} + e^{- i k_1} + e^{i (k_1 - k_2)} \right) + c.c. & t \left( 1 + e^{- i k_1} + e^{- i k_2} \right) \\ t \left( 1 + e^{i k_1} + e^{i k_2} \right) & - V + t' e^{i \phi} \left( e^{- i k_2} + e^{i k_1} + e^{i (k_2 - k_1)} \right) + c.c. \end{pmatrix},
\end{split}
\end{equation} 
where $+ c.c.$ means add the complex conjugate of the term immediately before. Let $E_{\pm}(\vec{k})$ denote the ordered eigenvalues of $\oldhat{H}_{B}(\vec{k})$, known as the Bloch band functions. The spectrum of $H_{B}$ is then  
\begin{equation} \label{eq:H_bulk_spec}
    \text{Sp}( H_{B} ) = \text{Sp}_- \cup \text{Sp}_+, \quad \text{Sp}_\pm := \bigcup_{\vec{k} \in \Gamma^*} E_\pm(\vec{k}),
\end{equation}
where $E_{\pm}(\vec{k})$ is given by an explicit formula \eqref{eq:full_band_functions}. The associated (non-normalisable) eigenfunctions of $H_{B}$ are plane wave-like, given explicitly by $\Phi_{\pm}(\vec{k}) : \vec{m} \mapsto \Phi_{\pm,\vec{m}}(\vec{k})$, where
\begin{equation} \label{eq:bulk_modes}
    \Phi_{\pm,\vec{m}}(\vec{k}) := e^{i \vec{k} \cdot \vec{m}} \oldhat{\Phi}_\pm(\vec{k}),
\end{equation}
and $\oldhat{\Phi}_{\pm}(\vec{k})$ denotes an associated eigenvector of $\oldhat{H}_{B}(\vec{k})$ with eigenvalue $E_{\pm}(\vec{k})$.

\subsection{The periodic edge Haldane model} \label{sec:edge_Haldane}

We model electrons at an edge (specifically, a zig-zag edge) of the material as elements $\psi$ in the Hilbert space $\mathcal{H} := \ell^2(\mathbb{N}\times \mathbb{Z};\mathbb{C}^2)$. The Hamiltonian is again given by \eqref{eq:inf_H_0}, but we now impose a Dirichlet boundary condition
\begin{equation} \label{eq:edge_BC}
    \psi_{-1,n} = 0, \quad n \in \mathbb{Z}.
\end{equation}
We denote the Hamiltonian \eqref{eq:inf_H_0} subject to the boundary condition \eqref{eq:edge_BC} by $H_{E}$.

The edge Hamiltonian $H_{E}$ is invariant under translations with respect to $n$, so it is natural to take a partial Fourier transform. Let $L^2([0,2\pi);\ell^2(\mathbb{N};\mathbb{C}^2))$ denote the space of functions $\tilde{\psi} : k \mapsto \tilde{\psi}(k)$, where $\tilde{\psi}(k) : m \mapsto \tilde{\psi}_m(k)$ and $\tilde{\psi}_m(k) = \left( \tilde{\psi}_m^A(k) , \tilde{\psi}_m^B(k) \right)^\top$, such that $\inty{0}{2 \pi}{ \sum_{m = 0}^\infty | \tilde{\psi}_m(k) |^2 }{k} < \infty$. We introduce the partial Fourier transform $\mathcal{G} : \ell^2(\mathbb{N} \times \mathbb{Z} ; \mathbb{C}^2) \rightarrow L^2( [0,2 \pi) ; \ell^2(\mathbb{N};\mathbb{C}^2))$ and its inverse
\begin{equation} \label{eq:Fourier_edge}
    \left( \mathcal{G} \psi \right)_m(k) := \sum_{n \in \mathbb{Z}} e^{- i k n} \psi_{\vec{m}}, \quad \left( \mathcal{G}^{-1} \tilde{\psi} \right)_{\vec{m}} := \frac{1}{2 \pi} \inty{0}{2 \pi}{ e^{i k n} \tilde{\psi}_m(k) }{k}.
\end{equation}
The action of the operator $H_{E}$ under the transformation \eqref{eq:Fourier_edge} is then
\begin{equation}
    \left( \left( \mathcal{G} H_{E} \mathcal{G}^{-1} \right) \tilde{\psi} \right)_m(k) = \left( \oldhat{H}_{E}(k) \tilde{\psi}(k) \right)_{m},
\end{equation}
where $\oldhat{H}_{E}(k)$ is given by \eqref{eq:edge_H}, subject to the boundary condition $\tilde{\psi}_{-1}(k) = 0$. The spectrum of $H_{E}$ is then
\begin{equation} \label{eq:H_edge_spec}
    \text{Sp}( H_{E} ) = \bigcup_{k \in [0,2\pi)} \text{Sp}( \oldhat{H}_{E}(k) ),
\end{equation}
and the associated eigenfunctions are plane wave-like parallel to the edge, given explicitly by $\Phi({k}) : \vec{m} \mapsto \Phi_{\vec{m}}({k})$, where
\begin{equation} \label{eq:edge_modes}
    \Phi_{\vec{m}}(k) := e^{i k n} \oldhat{\Phi}_m(k),
\end{equation}
and $\oldhat{\Phi}(k) : m \mapsto \oldhat{\Phi}_m(k)$ denotes any eigenfunction of $\oldhat{H}_{E}(k)$. 

Note that, unlike $\oldhat{H}_{B}(\vec{k})$, the Bloch-reduced operator in this case, $\oldhat{H}_{E}(k)$, generally cannot be diagonalised explicitly. However, we can nonetheless make some general observations. By the Weyl criterion (see Theorem 5.10 of \cite{HislopSigal}), we clearly have that\footnote{Suppose $\lambda \in \text{Sp}( H_B )$. Then there exists a sequence $\{f_n\} \in \ell^2(\mathbb{Z}^2;\mathbb{C}^2)$ such that $\| f_n \| = 1$ and $\| ( H_B - \lambda ) f_n \| \rightarrow 0$ as $n \rightarrow \infty$. But since $H_E$ and $H_B$ act identically for $m > 0$, and since $H_B$ is periodic, we can always translate the $f_n$ in order to generate a sequence $\{ g_n \} \in \ell^2(\mathbb{N}\times\mathbb{Z};\mathbb{C}^2)$ such that $\| g_n \| = 1$ and $\| ( H_E - \lambda ) g_n \| \rightarrow 0$, and hence $\lambda \in \text{Sp}( H_E )$.}
\begin{equation}
    \text{Sp}( H_{B} ) \subset \text{Sp}( H_{E} ).
\end{equation}
However, equality does not hold in general, because $H_{E}$ may have additional spectrum due to edge states: (non-normalisable) eigenfunctions \eqref{eq:edge_modes} of $H_{E}$ arising from the truncation \eqref{eq:edge_BC} which decay rapidly away from the edge \cite{1982Halperin,1993Hatsugai,2013GrafPorta}. Edge states are closely tied to topological properties of the Haldane model and are discussed in more detail in \cref{sec:edge_conductance}.

\subsection{Modeling defects and disorder} \label{sec:defects_and_disorder}

We model onsite disorder by adding an additional potential term 
\begin{equation} \label{eq:V_dis}
    ( V_{d} \psi )_{\vec{m}} = \begin{pmatrix} V^A_{\vec{m}} \psi^A_{\vec{m}} \\ V^B_{\vec{m}} \psi^B_{\vec{m}} \end{pmatrix}
\end{equation}
to $H_{B}$ and $H_{E}$, where the $V^\upsilon_{\vec{m}}$ are independently drawn from a uniform distribution with mean $0$ and width $w$
\begin{equation}
    V^\upsilon_{\vec{m}} \sim \mathcal{U}(0,w), \quad \upsilon \in \{A,B\}, \quad \vec{m} \in \mathbb{Z}^2.
\end{equation}
Clearly, we have $\| V_d \| \leq w/2$, where $\| \cdot \|$ denotes the operator norm. Note that in this work, we only compute physical properties for individual realisations of disorder; we do not attempt to compute statistical properties over many realisations. We model missing atom defects by setting the wave-function $\psi$ equal to zero at the missing sites. We write the Hamiltonians $H_{B}$ and $H_{E}$ with defects and/or disorder as $H_{B,d}$ and $H_{E,d}$, respectively. Note that $H_{B,d}$ and $H_{E,d}$ cannot be Bloch reduced. To compute their spectral properties, we must work with the infinite-dimensional operators directly.

\subsection{Bulk Hall conductance} \label{sec:bulk_conductance}

Physically speaking, eigenfunctions of the operators $H_B, H_E, H_{B,d}$, and $H_{E,d}$ correspond to states that can be occupied by electrons, with energies given by the associated eigenvalues. At zero temperature, there exists a threshold such that every state with energy below the threshold, and no state with energy above the threshold, is occupied by an electron. This threshold is known as the Fermi level. In what follows, we assume that the bands $\text{Sp}_-$ and $\text{Sp}_+$ of $H_B$ are separated by a gap, and that the addition of defects and disorder does not close this gap, so that $H_B$ and $H_{B,d}$ have a common gap $\Delta$. We assume further that the Fermi level lies in $\Delta$. Under these assumptions, $H_B$ and $H_{B,d}$ describe (bulk) insulators.

To build intuition, we focus first on the case without defects or disorder. The bulk conductance measures the current excited in the bulk of a material by an applied electric field. The linear coefficient of the conductance can be calculated analytically through linear response theory and is known as the Kubo formula \cite{ashcroft_mermin,Schulz-Baldes1998}. The part of the conductance parallel to the field vanishes in insulators, but the conductance may have a non-zero transverse (Hall) component. In natural units\footnote{So that the electron charge and Planck's constant both equal $1$.}, and in the limit of zero frequency and dissipation, this component takes the form \cite{1982ThoulessKohmotoNightingaledenNijs},
\begin{equation}
    \sigma_{B} = \frac{i}{2 \pi} \inty{\Gamma^*}{}{ \frac{ \bra{ \oldhat{\Phi}_-(\vec{k}) } \de_{k_1} \oldhat{H}_{B} (\vec{k}) \ket{ \oldhat{\Phi}_+(\vec{k}) } \bra{ \oldhat{\Phi}_+(\vec{k}) } \de_{k_2} \oldhat{H}_{B} (\vec{k}) \ket{ \oldhat{\Phi}_-(\vec{k}) } - ( 1 \leftrightarrow 2 ) }{ (E_+(\vec{k}) - E_-(\vec{k}))^2 } }{\vec{k}},
\end{equation}
where $(1 \leftrightarrow 2)$ is shorthand for the term immediately before, with $1$ replaced everywhere by $2$, and vice versa. After a series of manipulations \cite{1982ThoulessKohmotoNightingaledenNijs}, we find
\begin{equation}
    \sigma_{B} = \frac{ i }{ 2 \pi } \inty{\Gamma^*}{}{ \de_{k_1} \ip{ \oldhat{\Phi}_-(\vec{k}) }{ \de_{k_2} \oldhat{\Phi}_-(\vec{k}) } - \de_{k_2} \ip{ \oldhat{\Phi}_-(\vec{k}) }{ \de_{k_1} \oldhat{\Phi}_-(\vec{k}) } }{\vec{k}}.
\end{equation}
The integrand on the right-hand side is the Berry curvature \cite{berry} of the $-$ band, and its integral over the Brillouin zone must be an integer multiple of $2 \pi$ \cite{1982ThoulessKohmotoNightingaledenNijs,1988Haldane,nakahara2018geometry}. Thus we have
\begin{equation} \label{eq:chern}
    \sigma_{B} = c_-,
\end{equation}
where $c_-$ is an integer known as the Chern number. The Chern number, being an integer, cannot change value continuously as model parameters are varied and hence remains fixed as long as the bulk gap does not close. The Haldane phase diagram, which describes the values the Chern number can take as the model parameters are varied, can be calculated analytically when the model is periodic \cite{1988Haldane}. Whenever the Hall conductance is non-zero, we say the model is in its topological phase. 

We now consider the case of defects and/or disorder, which prevent Bloch reduction. A convenient expression of the Kubo formula is \cite{Avron1994,2005ElgartGrafSchenker}
\begin{equation} \label{eq:non_periodic_Kubo_lambdas}
    \sigma_{B} = - 2 \pi i \Tr\left\{ P_{B} \left[ [ P_{B}, \Lambda_1 ], [ P_{B} , \Lambda_2 ] \right]\right\},
\end{equation}
where $\Tr$ denotes the trace in $\ell^2(\mathbb{Z}^2;\mathbb{C}^2)$, $P_{B}$ denotes the spectral projection for the part of the spectrum of $H_{B,d}$ below $\Delta$, $\Lambda_1$ and $\Lambda_2$ denote characteristic functions for the sets $\{ \vec{m} \in \mathbb{Z}^2 : m < 0 \}$ and $\{ \vec{m} \in \mathbb{Z}^2 : n < 0 \}$, respectively. Note that the operator on the right-hand side of \eqref{eq:non_periodic_Kubo_lambdas} is not obviously trace-class. To see that it is, note that Combes--Thomas estimates \cite{1973CombesThomas} imply that $[ P_{B} , \Lambda_1 ]$ acts trivially on sites away from the line $m = 0$, while $[ P_{B} , \Lambda_2 ]$ acts trivially on sites away from the line $n = 0$. It follows that the operator on the right-hand side of \eqref{eq:non_periodic_Kubo_lambdas} acts trivially on sites away from the origin and is hence trace-class.

Finally, although we have so far assumed a spectral gap for $H_{B,d}$ in this section, we expect that definition \eqref{eq:non_periodic_Kubo_lambdas} remains valid and is an integer, even when $H_{B,d}$ has no spectral gap, but does exhibit dynamical (Anderson) localisation in a spectral interval, following \cite{2005ElgartGrafSchenker} who proved this in the setting of the quantum Hall effect. Our computational methods do not rely on the existence of a spectral gap, and can be used in this setting as well.

\subsection{Edge states, edge conductance, and edge wave-packets} \label{sec:edge_conductance}

Throughout this section, we continue to assume that $H_B$ and $H_{B,d}$ have a common gap $\Delta$. Recall that it does not follow that $\Delta$ is a spectral gap of $H_{E}$ or $H_{E,d}$, because edge states with energies in the gap may occur. 

To build intuition, we again start by considering the periodic setting. Edge states of $H_{E}$ are extensions \eqref{eq:edge_modes} of bound (normalisable) states associated to discrete eigenvalues $E(k)$ of the operators $\oldhat{H}_{E}(k)$ acting on $\ell^2(\mathbb{N};\mathbb{C}^2)$. As $k$ varies through the interval $[0,2 \pi)$, these eigenvalues sweep out intervals $\ran_{k \in [0,2 \pi)} E(k)$ in the spectrum of $H_{E}$ \eqref{eq:H_edge_spec}. The maps $E : k \mapsto E(k)$ are known as the dispersion relations of the edge states. Superposing edge states with $k$ values near to some $k_0$ yields localised wave-packets which propagate along the edge with group velocity given by $E'(k_0)$ \cite{1993Hatsugai,1999Schulz-BaldesKellendonkRichter,2002KellendonkRichterSchulz-Baldes}. 

The current carried by edge states with energies in $\Delta$ is measured by the edge conductance. More precisely, let $\Delta'$ denote any subinterval of $\Delta$, and let $\chi_{\Delta'}$ denote the characteristic function for the interval $\Delta'$. The projection onto edge modes with energies in $\Delta'$ is then given by $P_E := \chi_{\Delta'}(H_E)$. In natural units, the edge conductance can then be defined by \cite{1999Schulz-BaldesKellendonkRichter,2002KellendonkRichterSchulz-Baldes}
\begin{equation}
    \sigma_E = \frac{1}{|\Delta'|} \inty{0}{2 \pi}{\tilde{\Tr}\left\{ P_E \de_k \oldhat{H}_{E}(k) \right\}}{k},
\end{equation}
where $\tilde{\Tr}$ denotes the trace in $\ell^2(\mathbb{N};\mathbb{C}^2)$. In the limit where $|\Delta'| \rightarrow 0$ so that $\frac{ \chi_{\Delta'} }{ |\Delta'| } \rightarrow \delta_{\lambda}$ for some $\lambda \in \Delta$, $\sigma_E$ can be computed analytically as follows. Let $\nu$ denote the number of edge state dispersion relations which cross $\lambda$, assigning $+1$ to those whose slopes are positive, and $-1$ to those whose slopes are negative. Then
\begin{equation}
    \sigma_E = \nu.
\end{equation}
The principle of bulk-edge correspondence \cite{1993Hatsugai,2013GrafPorta,1999Schulz-BaldesKellendonkRichter,2002KellendonkRichterSchulz-Baldes} states that $\sigma_B = \sigma_E$, and hence the integer $\nu$ equals the bulk Chern number $c_-$ \eqref{eq:chern}. A simple consequence of bulk-edge correspondence is that, since $\lambda \in \Delta$ was arbitrary, the edge Hamiltonian $H_{E}$ must have spectrum filling the whole bulk gap $\Delta$ whenever the bulk Chern number is non-zero\footnote{The spectrum filling the bulk gap is actually absolutely continuous, even in the presence of disorder; see \cite{Bols2021}.}.

In the presence of defects and/or disorder, a convenient expression for the edge conductance in an interval $\Delta' \subset \Delta$ (we assume $\Delta$ is again a gap for $H_{B,d}$) is
\begin{equation} \label{eq:non_periodic_edge_lambda}
    \sigma_E = \frac{- 2 \pi i}{|\Delta'|} \Tr \left\{P_E [H_{E,d},\Lambda_1]\right\},
\end{equation}
where $\Tr$ denotes the trace in $\ell^2(\mathbb{N} \times \mathbb{Z};\mathbb{C}^2)$, $P_E = \chi_{\Delta'}(H_{E,d})$, and $\Lambda_1$ again denotes the characteristic function for the set $\{ \vec{m} \in \mathbb{Z}^2 : m < 0 \}$; note that $m$ is the co-ordinate parallel to the edge. Just as with \eqref{eq:non_periodic_Kubo_lambdas}, it is not immediately obvious that the trace on the right-hand side of \eqref{eq:non_periodic_edge_lambda} is well-defined. That it is follows from decay of the matrix elements of $P_E$ away from the edge, and of $[H_{E,d},\Lambda_1]$ away from the line $m = 0$ \cite{1999Schulz-BaldesKellendonkRichter,2002KellendonkRichterSchulz-Baldes,2004KellendonkSchulz-Baldes,2005ElgartGrafSchenker}.

Note that, just like the bulk conductance, the edge conductance can be defined and is expected to be quantised, even when $H_{B,d}$ has no spectral gap, but does exhibit dynamical (Anderson) localisation. The definition of the edge conductance in this setting has to be slightly modified from \eqref{eq:non_periodic_edge_lambda}, however \cite{2005ElgartGrafSchenker}. Our computational methods can easily be generalised to this setting.

In the presence of defects and/or disorder, the Bloch decomposition \eqref{eq:H_edge_spec} is no longer valid. It follows that it no longer makes sense to form edge wave-packets by superposing edge states with nearby wavenumbers, or calculate their group velocities from the edge state dispersion relation. However, the persistence of the edge conductance in this regime suggests that localised initial data with spectral measure concentrated in the bulk gap will still propagate along the edge coherently. This remarkable behaviour has been observed numerically (see, for example, \cite{bal2021edge,Michala2021}), and even experimentally across various model systems \cite{2009WangChongJoannopoulosSoljacic,2013Rechtsmanetal,2015SusstrunkHuber,2017DelplaceMarstonVenaille}. Such initial data, which we again refer to as edge wave-packets, can be obtained by multiplying approximate edge states, i.e., approximate eigenfunctions of $H_{E,d}$ with energies in the bulk gap, by a smooth decaying function such as a Gaussian.

\subsection{Precise statement of TI physical properties to compute} \label{sec:comp_goals}

We can now re-state the computational problems (PA)--(PD) referred to in the introduction more precisely.
\begin{itemize}
    \item[(PA)] Numerically compute the \textbf{bulk} and \textbf{edge conductances}, defined for $H_{B,d}$ and $H_{E,d}$ through formulas \eqref{eq:non_periodic_Kubo_lambdas} and \eqref{eq:non_periodic_edge_lambda}, respectively.
    \item[(PB)] Numerically compute the \textbf{edge states} of $H_{E}$ and their associated \textbf{dispersion relations} $E : k \mapsto E(k)$ by computing the discrete eigenvalues and associated bound states of the Bloch-reduced operators $\oldhat{H}_{E}(k)$ \eqref{eq:edge_H}.
    \item[(PC)] Numerically compute \textbf{approximate edge states} and \textbf{edge wave-packets} (defined by multiplying approximate edge states by a smooth, decaying function) of $H_{E,d}$, and their \textbf{spectral measures}.
    \item[(PD)] Numerically compute the \textbf{dynamics} of edge wave-packets of $H_{E,d}$.
\end{itemize}

\section{Results}
\label{sec:resultshjhj}

In this section, we illustrate the utility of our methods by giving several numerical results.  

\subsection{Bulk and edge conductances} \label{sec:res:conductance}

We begin by examining the bulk and edge conductances defined in \cref{sec:bulk_conductance,sec:edge_conductance}, respectively. We compute spectral projections using the method of \cref{Num_spec_proj}.

\begin{figure}
\centering
\begin{minipage}[b]{0.48\textwidth}
\begin{overpic}[width=0.99\textwidth]{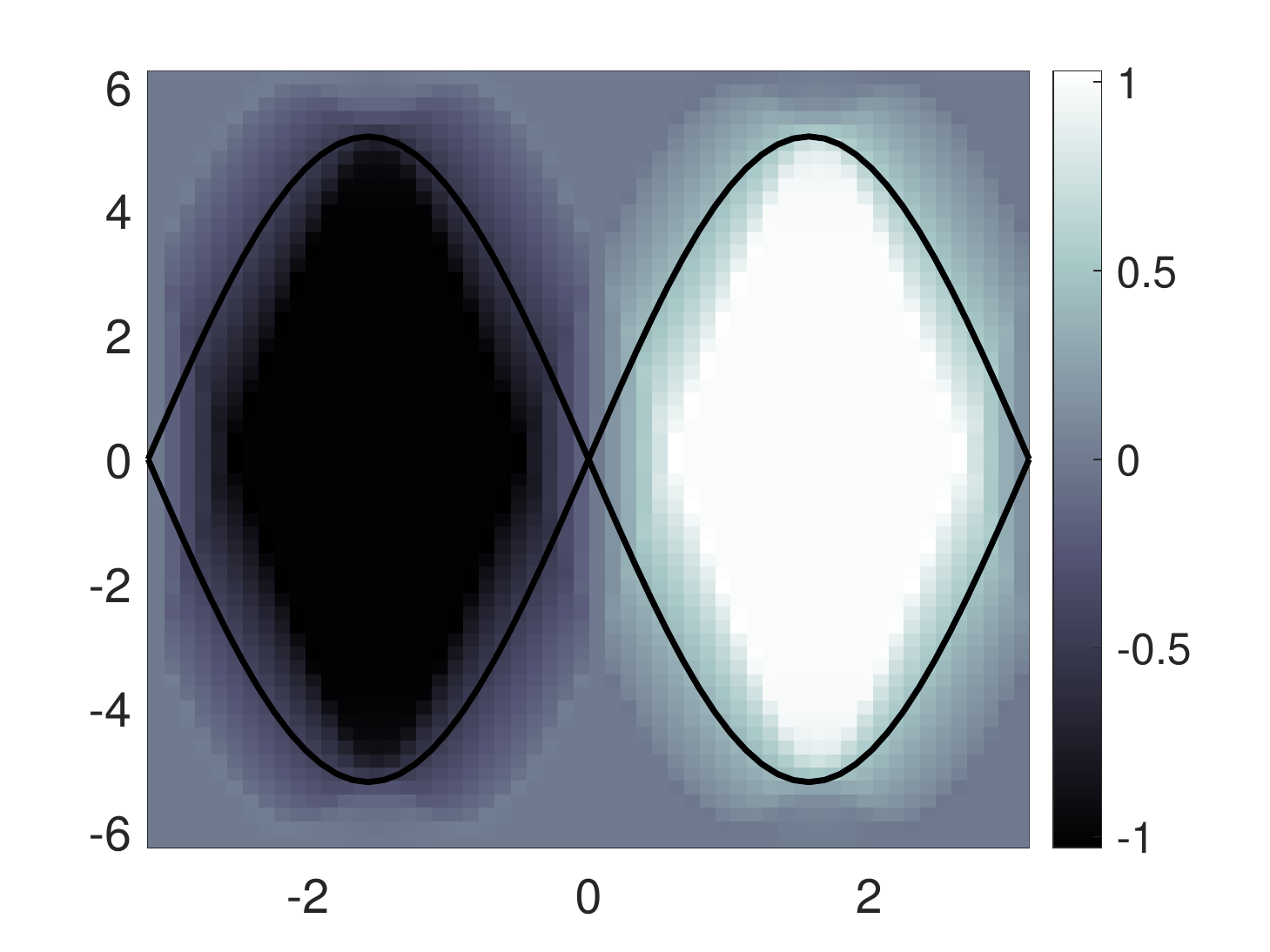}
\put (45,-2) {$\displaystyle \phi$}
\put (-2,39) {$\displaystyle V$}
\end{overpic}
\end{minipage}
\begin{minipage}[b]{0.48\textwidth}
\begin{overpic}[width=0.99\textwidth]{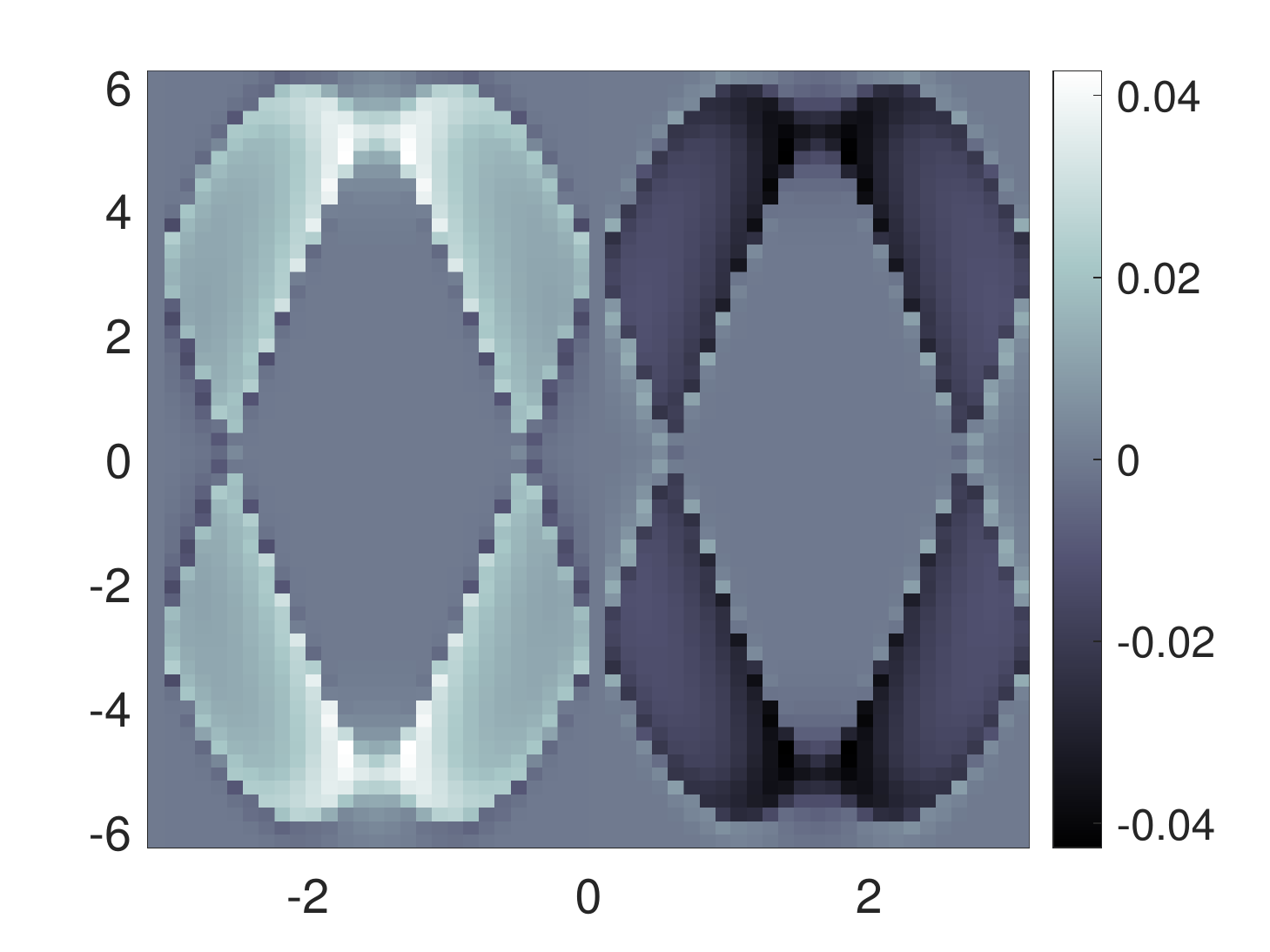}
\put (45,-2) {$\displaystyle \phi$}
\put (-2,39) {$\displaystyle V$}
\end{overpic}
\end{minipage}
\caption{Topological phase plot for $H_{B,d}$ (left panel) with uniform disorder of width $w=0.2$ (see~\cref{sec:defects_and_disorder}). The parameters $t$ and $t'$ are fixed at $1$ and $0.2$, respectively, while $V$ and $\phi$ are varied. The phase plot for $H_B$ is identical apart from small discrepancies caused by numerical under-resolution along the topological phase boundary; the difference is shown in the right panel.  On the left panel we overlay the curves $\pm 3 \sqrt{3} t' \sin \phi$ which mark the boundaries of the phase regions computed in the periodic case by Haldane \cite{1988Haldane}.}
\label{fig:conductivity_phase_diagrams}
\end{figure}

In the left panel of \cref{fig:conductivity_phase_diagrams}, we show results of numerically computing the bulk Hall conductance of the Haldane model with disorder $H_{B,d}$. In this experiment, the parameters $t = 1$, $t' = 0.2$, and the disorder width $w = 2$ are fixed, while $V$ and $\phi$ vary. The right panel of \cref{fig:conductivity_phase_diagrams} shows the difference between the phase diagram in the left panel and that of $H_B$, the bulk Haldane model without disorder. The difference is within the chosen numerical tolerance, confirming the stability of the Haldane model's bulk Hall conductance even in the face of disorder. The computational results are in excellent agreement with the phase diagram computed analytically in Haldane's original work \cite{1988Haldane}. 

\begin{figure}
\centering
\begin{minipage}[b]{0.48\textwidth}
\begin{overpic}[width=0.99\textwidth]{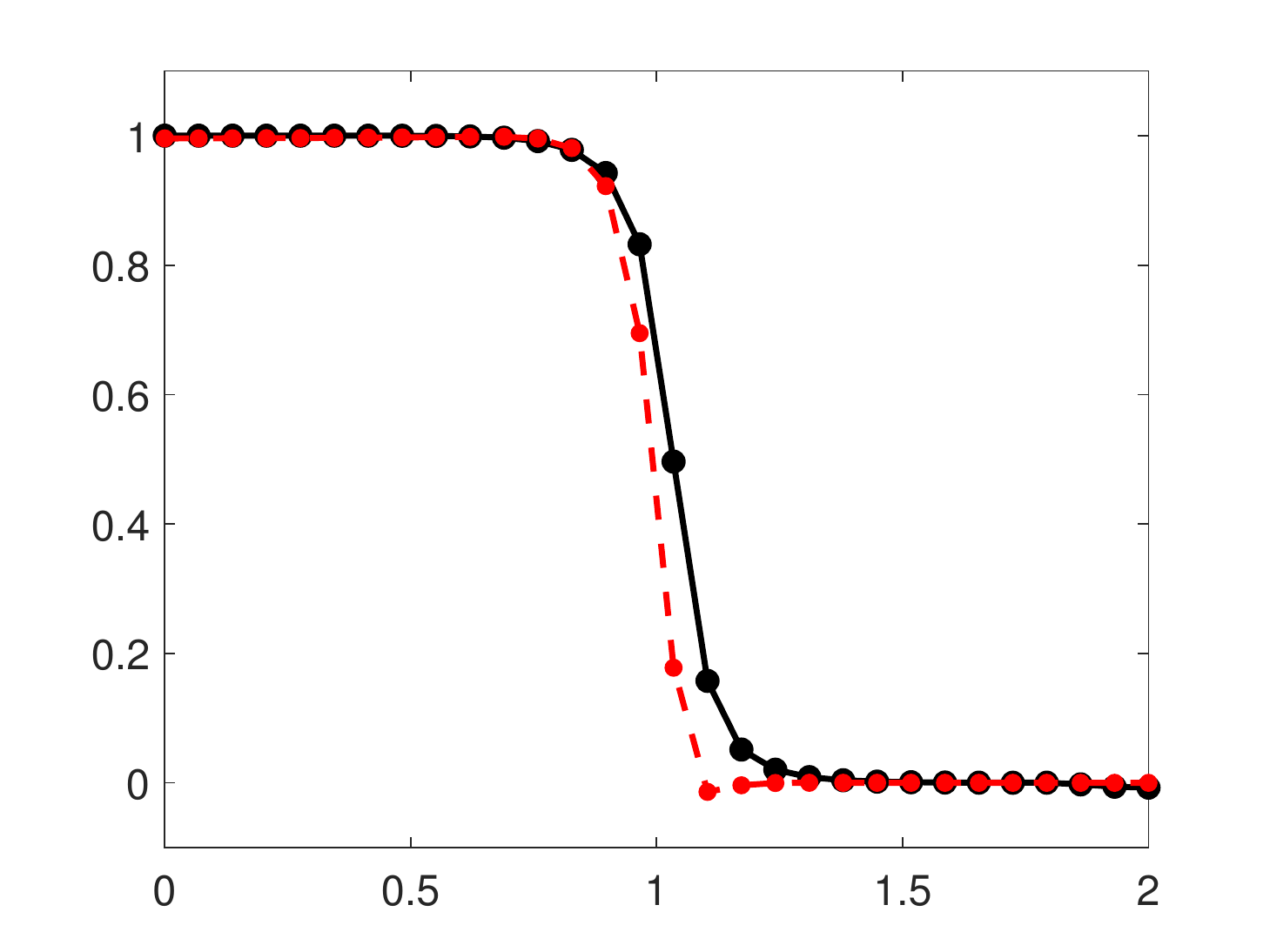}
\put (49,-2) {$\displaystyle V$}
\put (40,72) {Conductance}
\put (55,35) {\rotatebox{-82}{Bulk}}
\put (45,44) {\rotatebox{-83}{Edge}}
\end{overpic}
\end{minipage}
\begin{minipage}[b]{0.48\textwidth}
\begin{overpic}[width=0.99\textwidth]{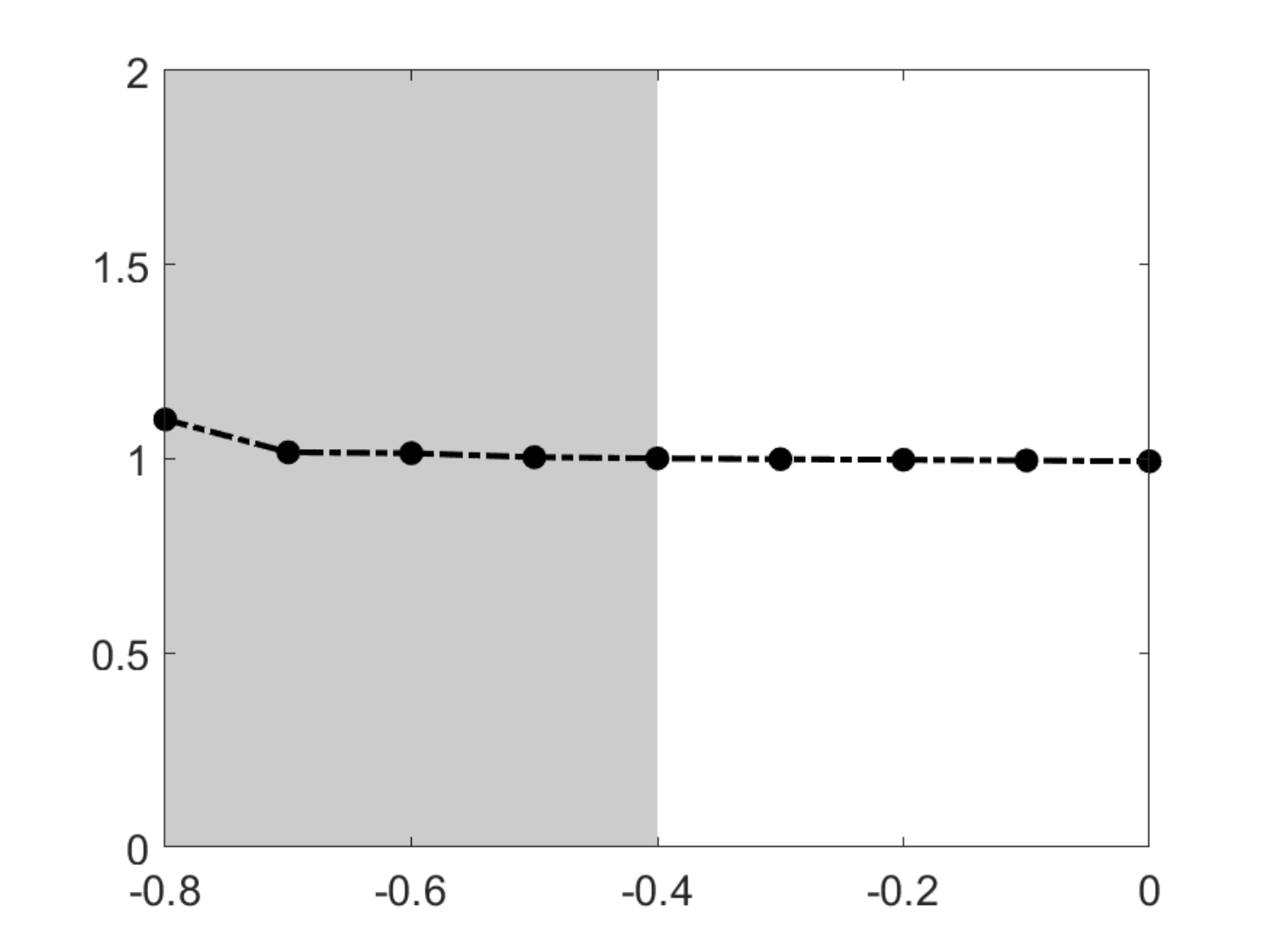}
\put (49,-2) {$\displaystyle E_F$}
\put (47,72) {$\displaystyle \sigma_B$}
\put (58,10) {Spectral gap}
\put (20,10) {Mobility gap}
\end{overpic}
\end{minipage}
\caption{The Bulk and Edge Hamiltonians with parameters $t=1$, $t'=0.2$, and $\phi=\pi/2$ exhibit a topological phase transition as $V$ is increased from $0$ to $2$, during which both the bulk and edge conductance (see~\eqref{eq:non_periodic_Kubo_lambdas}) switch from $1$ to $0$ (left panel). When a uniform random potential with $w=1.8$ (see~\cref{sec:defects_and_disorder}) is added to $H_B$, $\sigma_B$ remains stable as the Fermi level is varied through the spectral gap and regions of the spectrum exhibiting localised eigenstates, which do not support conduction (right panel)~\cite{2005ElgartGrafSchenker}.}
\label{fig:conductivity_phaseT}
\end{figure}

The bulk Hall conductance of $H_B$ is plotted as a function of $V$ in the left panel of~\cref{fig:conductivity_phaseT} and compared with the edge conductance $\sigma_E$ (see~\eqref{eq:non_periodic_edge_lambda}). The parameters $t=1$, $t'=0.2$, and $\phi=\pi/2$ are fixed in this computation. As $V$ is increased from $0$ to $2$, both the bulk and edge conductance exhibit a topological phase transition, and switch from $1$ to $0$. When a uniform random potential with $w=1.8$ (see~\cref{sec:defects_and_disorder}) is added to $H_B$, the spectral gap shrinks. However, the additional spectrum is typically highly localised and we expect that it does not contribute to the conductance. The right panel of~\cref{fig:conductivity_phaseT} demonstrates that $\sigma_B$ remains stable even as the Fermi level varies through a spectral gap and a so-called ``mobility gap",\footnote{Here we use the convenient terminology ``mobility gap'' to refer to an interval where the Hamiltonian has spectrum but the associated spectral projection does not contribute to conduction. We are not aware of any rigorous result proving existence of such a regime for the Haldane model, although such a regime is known to occur in models of the quantum Hall effect \cite{Germinet2007}.} where the Hamiltonian has spectrum but the associated spectral projection does not contribute to the Hall conductance \cite{2005ElgartGrafSchenker}. Approximate eigenstates of $H_{B,d}$ associated with two spectral regimes are plotted and compared with approximate eigenstates of $H_B$ in~\cref{fig:global_vs_local_states}. The states associated with the mobility gap are highly localised and do not contribute to conductance (see~\eqref{fig:global_vs_local_states}, bottom), while the states corresponding to points in the spectrum of $H_B$ are not localised (see~\cref{fig:global_vs_local_states}, top). 

\begin{figure}
\centering
\begin{minipage}[b]{0.48\textwidth}
\begin{overpic}[width=0.99\textwidth]{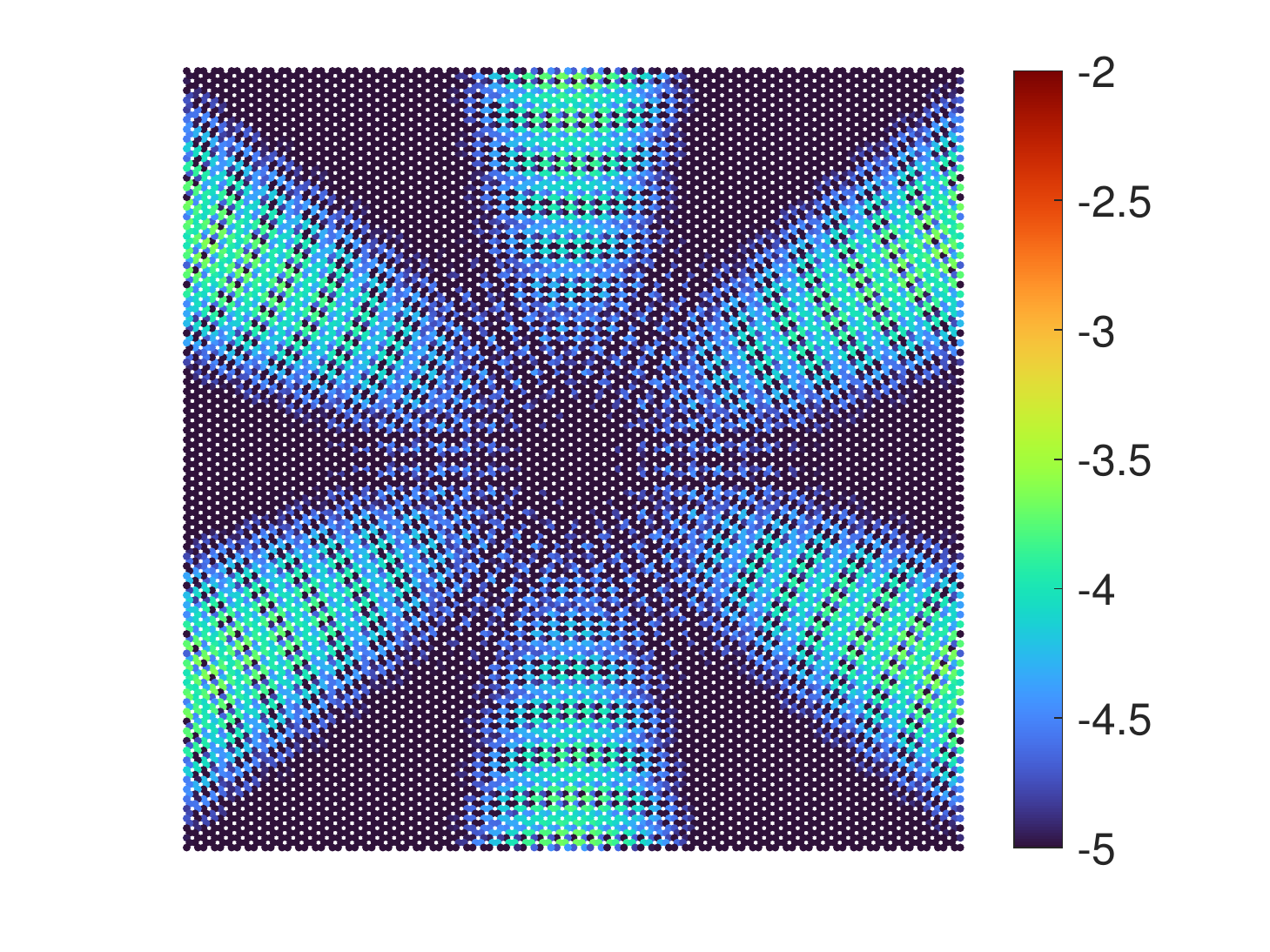}
\put (92,48) {\rotatebox{-90}{$\displaystyle \log(|\psi|^2)$}}
\end{overpic}
\end{minipage}
\begin{minipage}[b]{0.48\textwidth}
\begin{overpic}[width=0.99\textwidth]{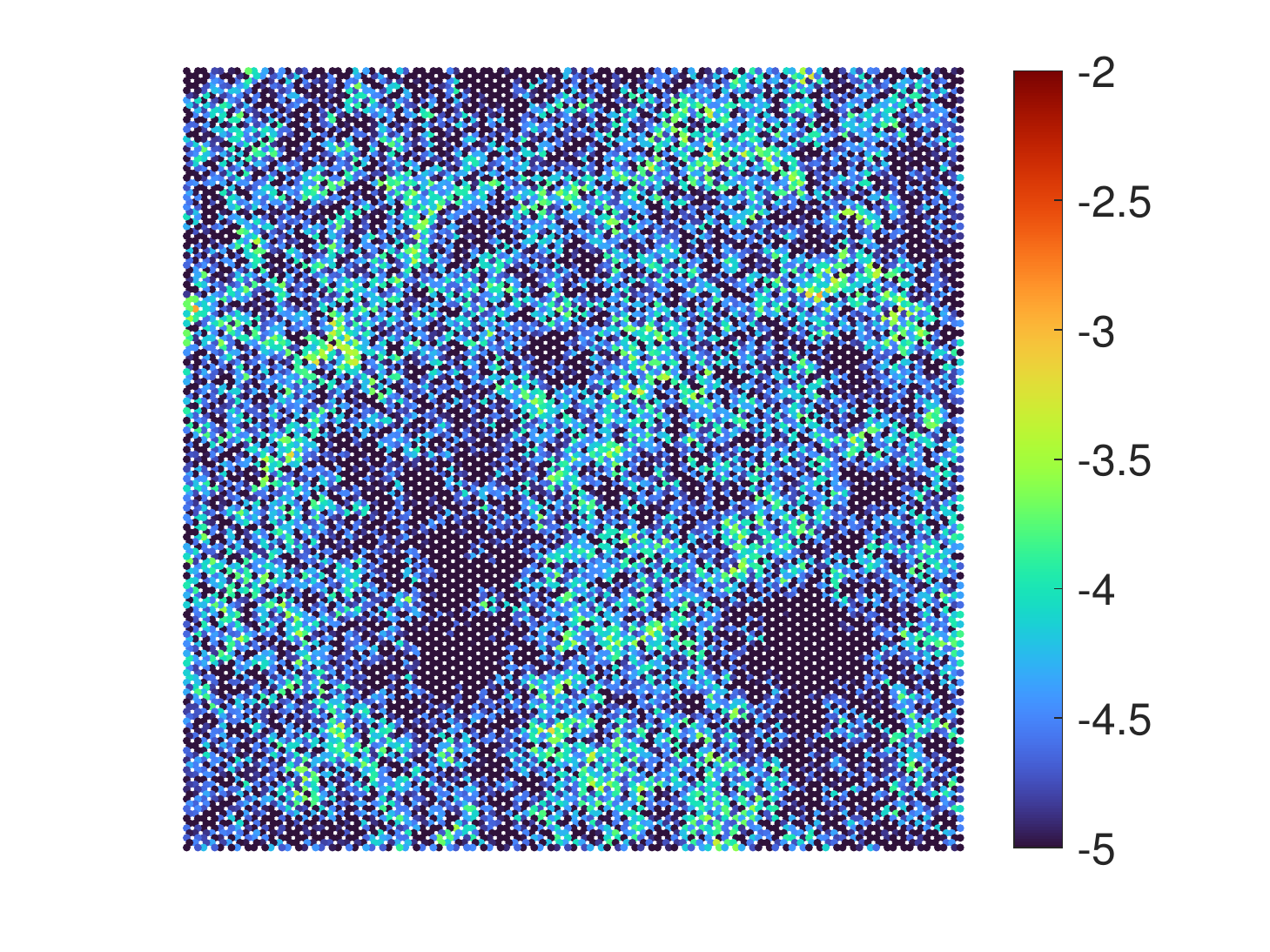}
\put (92,48) {\rotatebox{-90}{$\displaystyle \log(|\psi|^2)$}}
\end{overpic}
\end{minipage}
\begin{minipage}[b]{0.48\textwidth}
\begin{overpic}[width=0.99\textwidth]{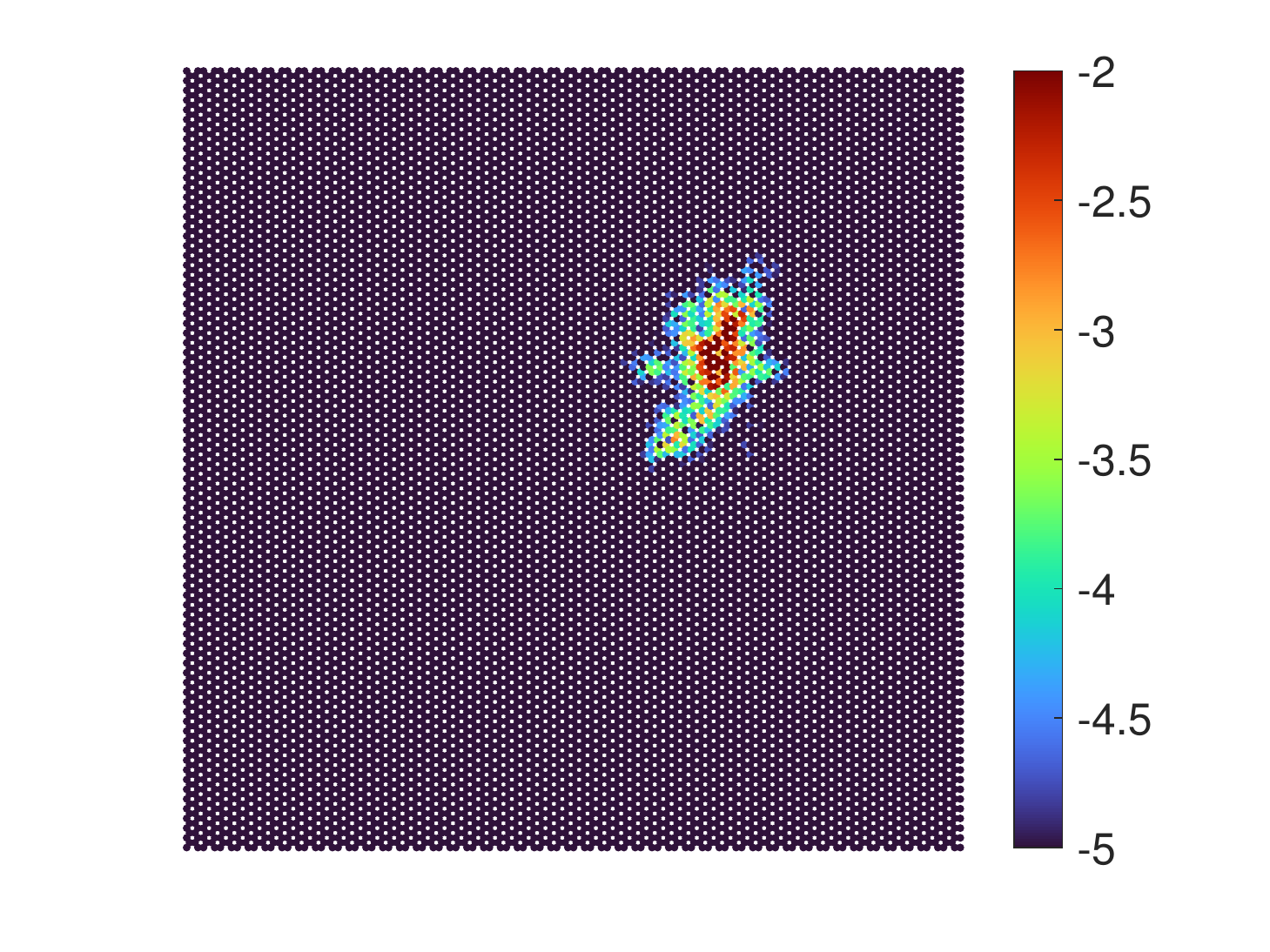}
\put (92,48) {\rotatebox{-90}{$\displaystyle \log(|\psi|^2)$}}
\end{overpic}
\end{minipage}
    \caption{Approximate eigenstate of $H_B$ with parameters $t=1$, $t'=0.2$, $\phi=\pi/2$ associated with point $-1.5$ in $\mathrm{Sp}(H_B)$ (top left). The approximate eigenstate has rotational symmetry because it is computed using rectangular truncations in the radial direction. Approximate eigenstates of $H_{B,d}$ with $w=1.8$ associated with points $-1.5$ (top right) and $-0.6$ (bottom) in $\mathrm{Sp}(H_{B,d})$. We show a truncated square portion of the computed approximate state in each case.}
\label{fig:global_vs_local_states}
\end{figure}

\subsection{Edge states at a periodic edge and their dispersion relations}
\label{sec:res:11}

We now show results of computing edge states and their dispersion relations, as discussed in \cref{sec:edge_conductance}. To do this, we compute the spectrum and associated eigenfunctions (when the spectrum is discrete) of the infinite-dimensional operators $\oldhat{H}_E(k)$ \eqref{eq:edge_H} using the methods of \cref{Num_spectra}.

\begin{figure}
\centering
  \begin{minipage}[b]{0.8\textwidth}
    \begin{overpic}[width=1\textwidth]{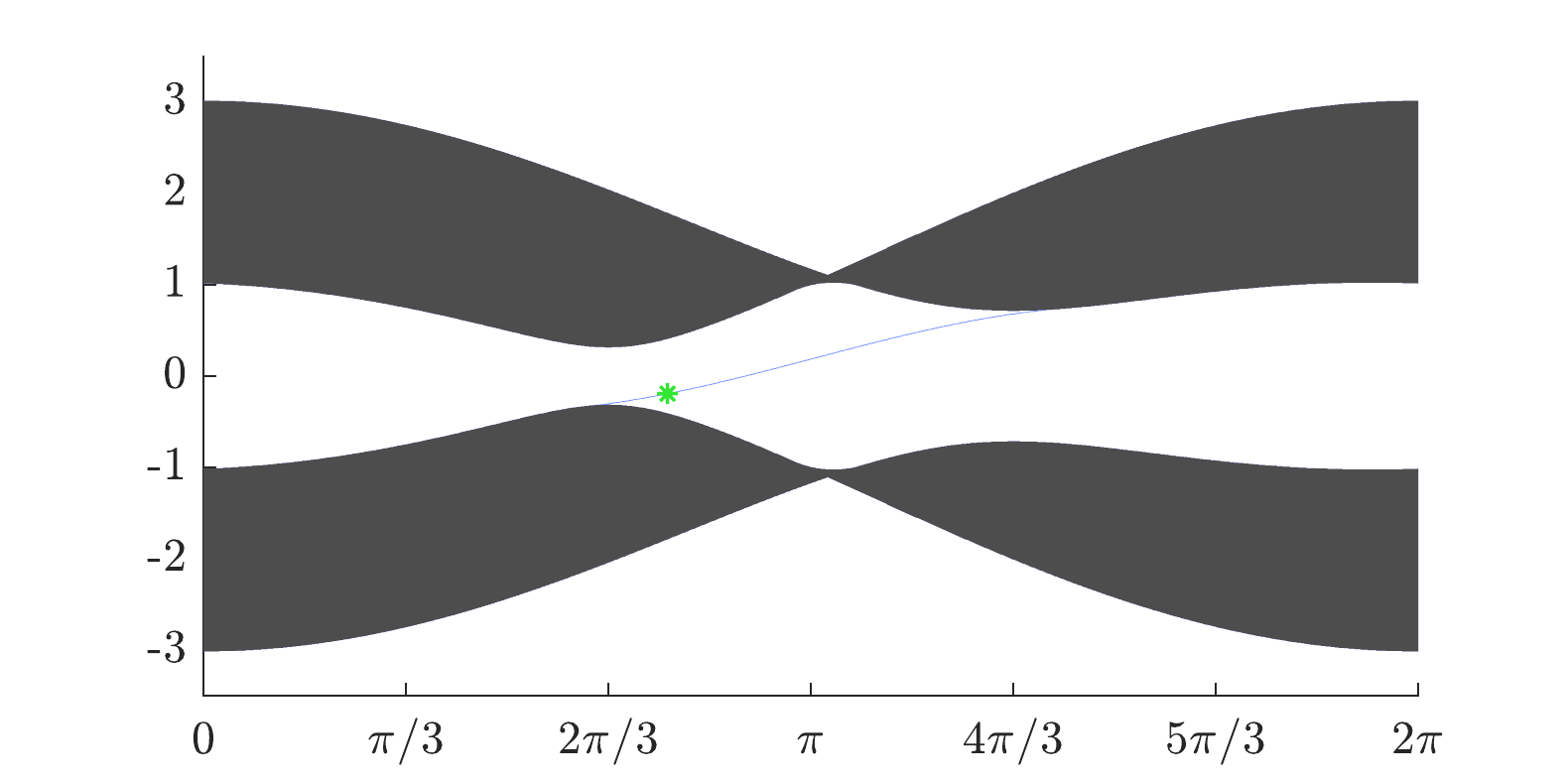}
		\put(58,25){\vector(-1,1){3}}
		\put(59,23){\small{}edge states}
		\put(51,-2){$k$}
		\put(6,25.2){$E$}
     \end{overpic}
  \end{minipage}
    \caption{Plot of the spectra of the Bloch-reduced edge Hamiltonian operators $\oldhat{H}_E(k)$ \eqref{eq:edge_H} as a function of $k \in [0,2\pi)$. The spectrum of $H_E$ \eqref{eq:edge_BC} is the union of these spectra. The black portion of the plot corresponds to the spectrum of the bulk Hamiltonian $H_B$ \eqref{eq:inf_H_0} after Bloch-reduction in the direction parallel to the edge. The blue portion corresponds to spectrum resulting from the Dirichlet boundary condition at the edge \eqref{eq:edge_BC}. The green star corresponds to the edge state in \cref{fig:haldane_remove_states1} (left). Note that we choose to consider an edge state whose energy is close to the bulk spectrum, which makes coupling of the edge state to bulk modes possible in \cref{fig:haldane_time_frames_remove_defect} and \cref{fig:haldane_time_frames_ext_pot}. For each value of $k$ we adaptively increased the truncation size until the error in \eqref{spec_err_controlll} was below the chosen tolerance $0.01$.
    }
\label{fig:haldane_band_structure}
\end{figure}

We start by computing the spectrum of the operators $\oldhat{H}_E(k)$ \eqref{eq:edge_H} for $k \in [0,2 \pi)$, showing the results in \cref{fig:haldane_band_structure}. We select the parameters $t=1$, $t'=0.1$, $\phi=\pi/2$ and $V=0.2$, for which the model is in its topological phase and hence edge states must occur. The spectrum consists of two parts. The first part is the spectrum of $H_{B}$ \eqref{eq:inf_H_0} after Bloch-reduction with respect to one of the quasi-momenta, and is marked in black on \cref{fig:haldane_band_structure}. The second part is the additional part of the spectrum arising from the Dirichlet boundary condition at the edge \eqref{eq:edge_BC}, and is marked in blue on \cref{fig:haldane_band_structure}. For any fixed $k$, this spectrum is discrete, with an associated eigenfunction that decays into the bulk. The associated eigenfunctions of these eigenvalues are known as edge states. When extended according to \eqref{eq:edge_modes} they become non-normalisable eigenfunctions of $H_E$ that extend parallel to the edge. We plot an extended edge state, corresponding to the green star marked in \cref{fig:haldane_band_structure}, in the left panel of \cref{fig:haldane_remove_states1}. Before extension to the whole lattice, the computed state has a residual of less than $4\times 10^{-16}$.

\begin{figure}
\centering
\begin{tabular}{cc}
Edge State (no defect) & Edge State (with defect)\\
\begin{overpic}[width=0.45\textwidth,clip,trim={0mm 15mm 70mm 15mm}]{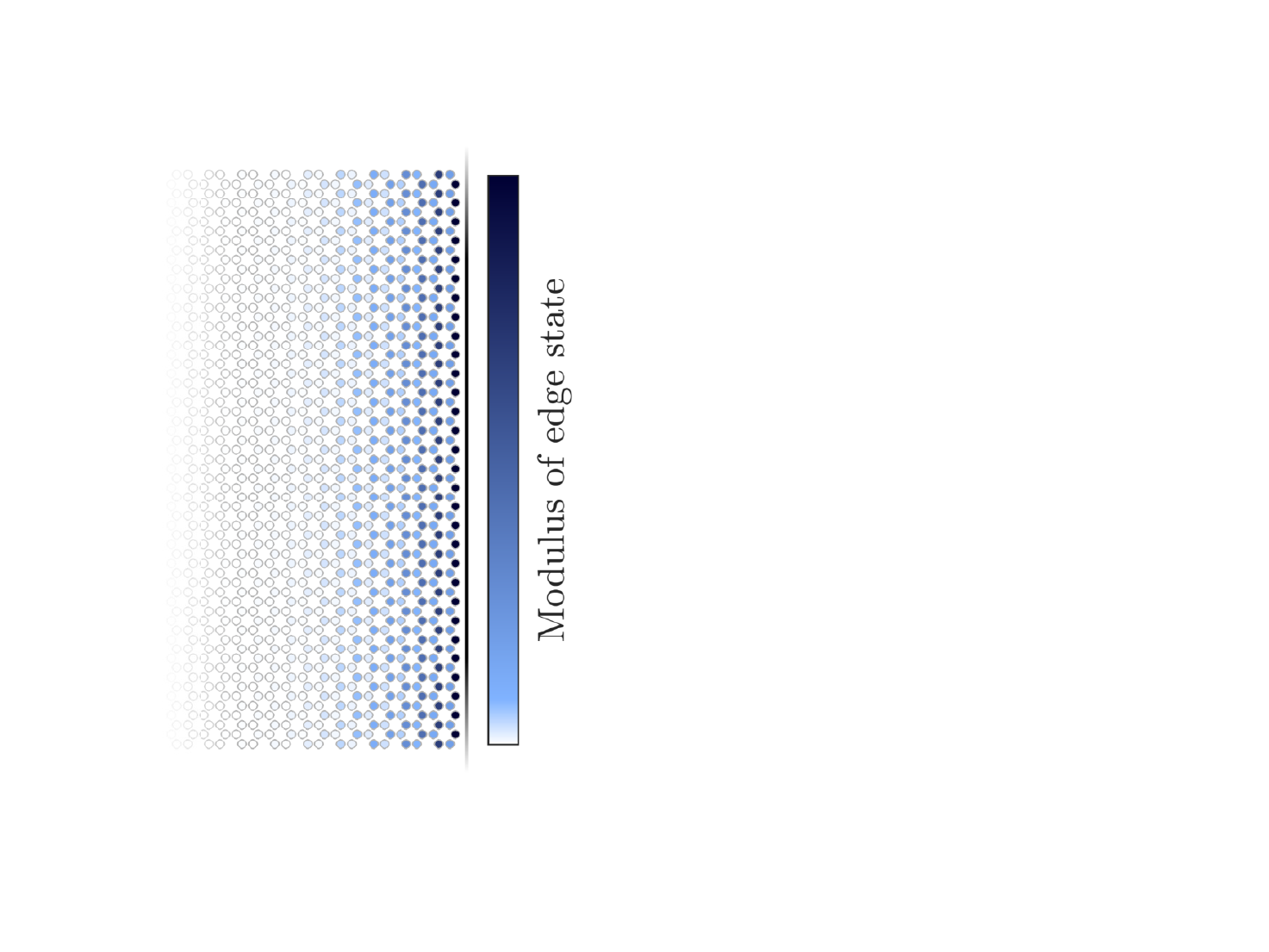}
\end{overpic}&
\begin{overpic}[width=0.45\textwidth,clip,trim={0mm 15mm 70mm 15mm}]{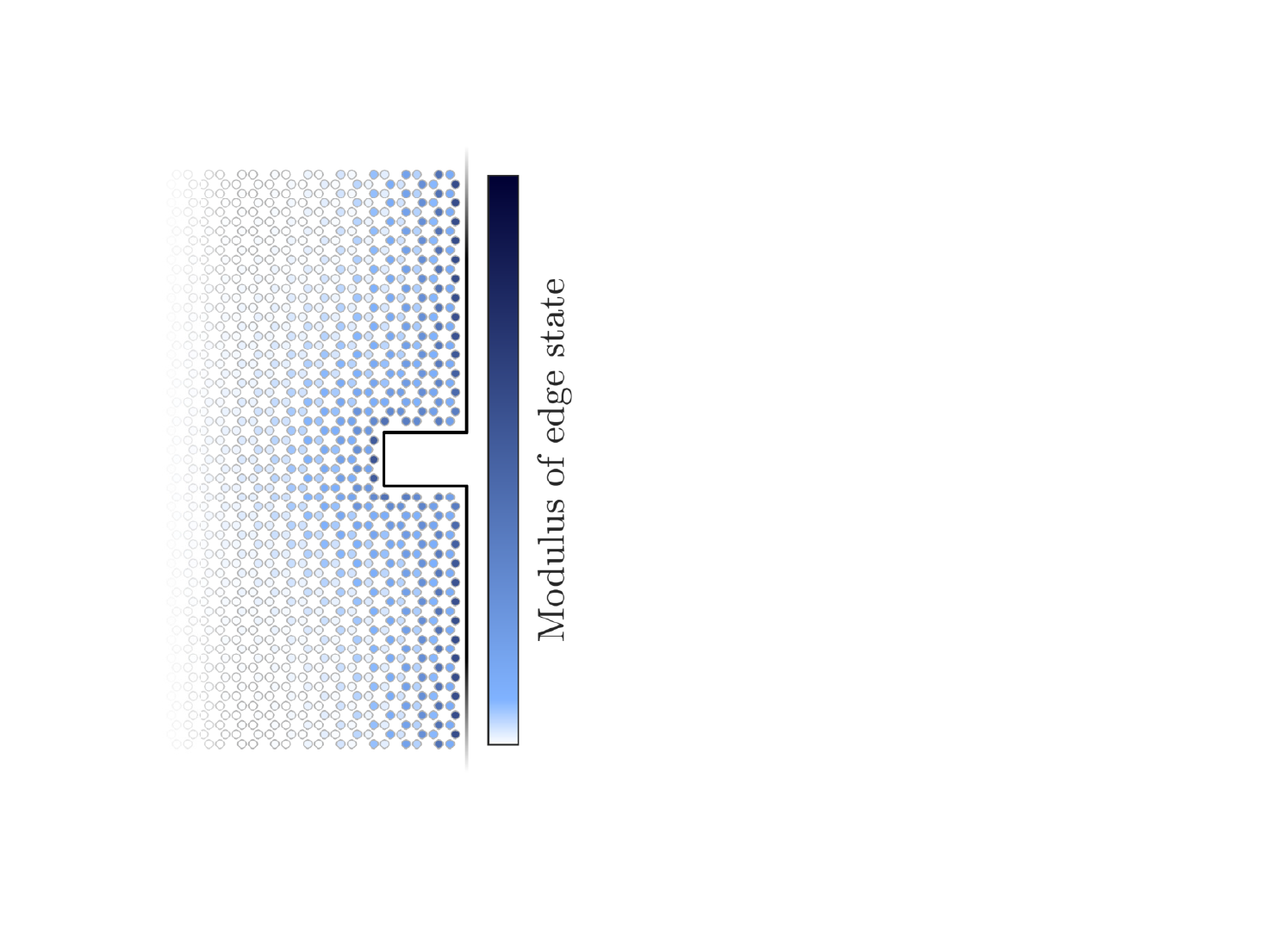}
\end{overpic}
\end{tabular}
\caption{Left: Absolute value of the approximate eigenstate (periodically continued to the whole lattice) of the edge Hamiltonian corresponding to the green star in \cref{fig:haldane_band_structure}. We have truncated the lattice to show only a finite portion. Right: An edge state at the energy of the green star when we have an additional defect of missing sites.}
\label{fig:haldane_remove_states1}
\end{figure}

\subsection{Approximate edge states at a non-periodic edge} \label{sec:res:non-periodic}

Next, we show results of computing approximate edge states non-periodic edges. Since the edge Hamiltonian cannot be Bloch reduced, we must compute edge states through the infinite-dimensional Hamiltonian directly. Since the spectrum associated to edge states is absolutely continuous \cite{Bols2021}, we restrict attention to computing \emph{approximate} edge states using the methods of \cref{Num_spectra}.

Our first experiment seeks to answer the question:  \textit{What happens to the edge state in \cref{fig:haldane_band_structure} if we remove a group of sites along the edge?} More specifically, what is the form of the approximate edge state of the edge Hamiltonian with the defect $H_{E,d}$, with the same energy as that exact edge state? This mode is computed in the right panel of \cref{fig:haldane_remove_states1} with residual bounded by $10^{-3}$ (this residual is larger than for the periodic case in \cref{sec:res:11} since we must compute an edge state that does not decay parallel to the edge through the infinite-dimensional Hamiltonian directly). The approximate edge state simply snakes around the defect. Away from the defect, the approximate edge state of $H_{E,d}$ is essentially identical to the exact edge state of $H_E$. This is quite a general phenomenon: in \cref{fig:haldane_P}, we compare the diagonal entries of the spectral projections onto an interval of edge states of the edge Hamiltonians with and without missing atoms. We find that the difference in the projections decays rapidly away from the defect.

\begin{figure}
\centerline{\includegraphics[width=0.49\textwidth,clip,trim={0mm 15mm 70mm 15mm}]{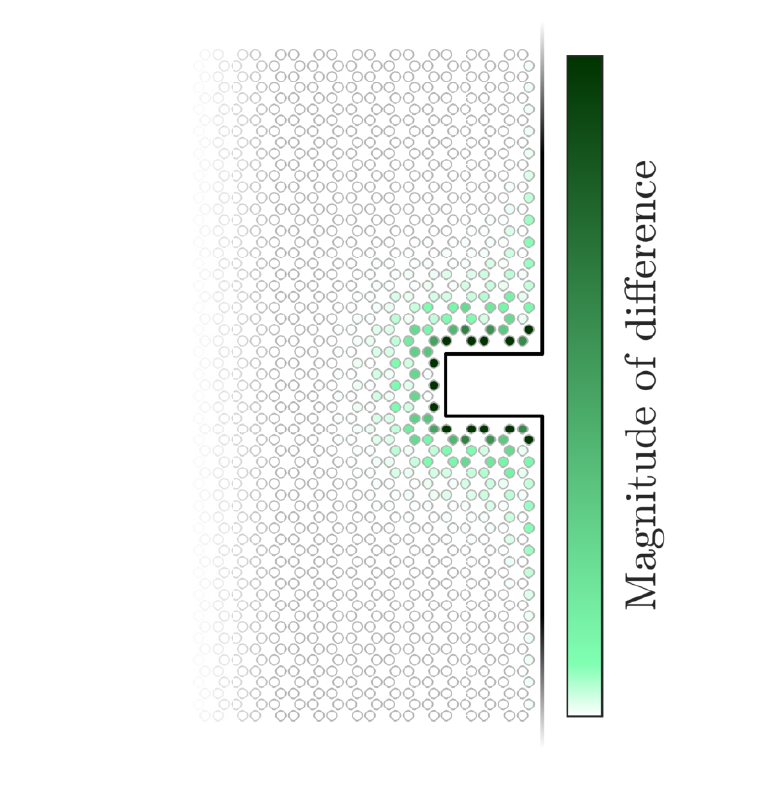}}
    \caption{Using the method of \cref{Num_spec_proj}, we can compute spectral projections onto an interval $\Delta$ of edge states of $H_E$ and $H_{E,d}$, where $H_{E,d}$ has sites removed from the edge, in order to visualise how the whole interval of edge states responds to the perturbation. The selected parameters are $t=1$, $t'=0.1$, $\phi=\pi/2$ and $V=0.2$. Here, we compute the spectral projections onto edge states associated with the interval $[-0.1,0.05]$ and plot the difference of the diagonal entries of these projections. We see that the defect only affects the projection locally, i.e., near the missing atoms.}
\label{fig:haldane_P}
\end{figure}

Our second experiment considers the same question, but in this case the perturbation is no longer a local defect of the edge, but a random onsite potential \eqref{eq:V_dis} added to every site with $w = 1$. This potential represents a non-compact perturbation of $H_{E}$. We plot the value of the specific realisation of the random potential used for our experiment in the left panel of \cref{fig:haldane_ext_pot}. In the right panel of \cref{fig:haldane_ext_pot} we plot the approximate edge state with a residual bounded by $10^{-3}$. We find that the approximate edge state persists again, despite the perturbation.

\begin{figure}
\centering
\begin{tabular}{cc}
Random Potential & Perturbed Edge State\\
\begin{overpic}[width=0.45\textwidth,clip,trim={0mm 15mm 70mm 15mm}]{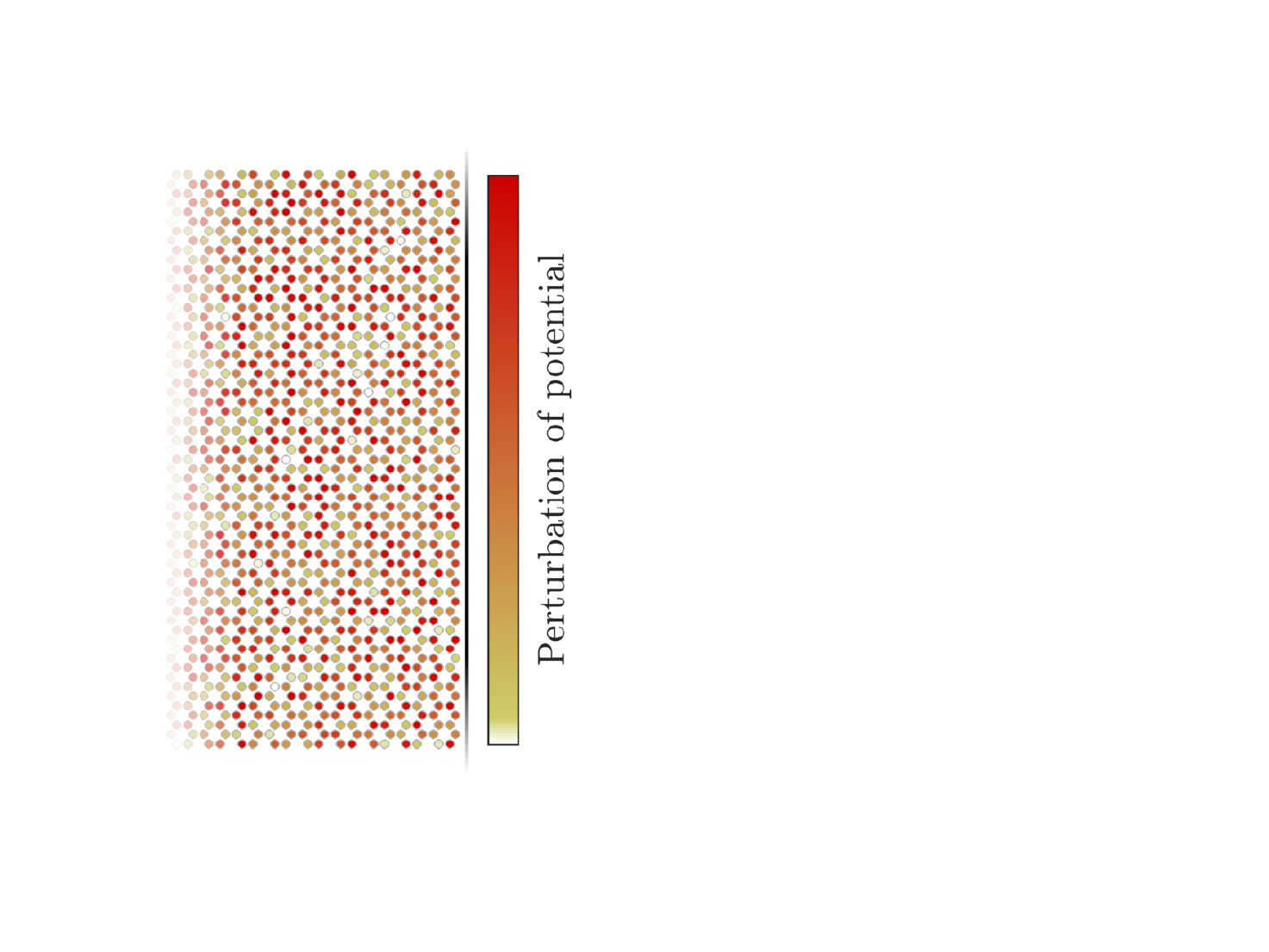}
\end{overpic}&
\begin{overpic}[width=0.45\textwidth,clip,trim={0mm 15mm 70mm 15mm}]{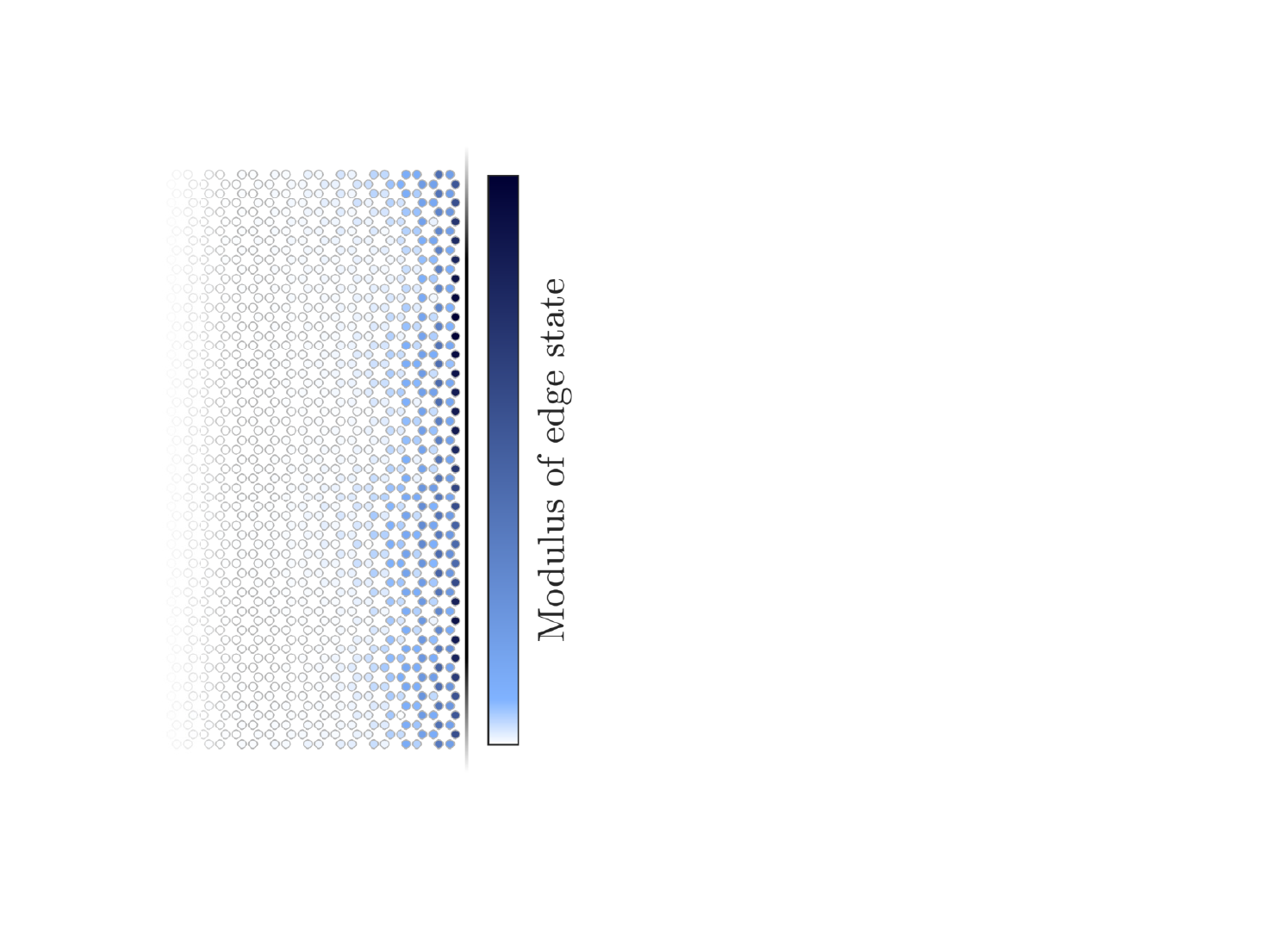}
\end{overpic}
\end{tabular}
    \caption{Left: Specific realisation of the random potential \eqref{eq:V_dis}, added at every site. Right: The approximate edge state of $H_{E,d}$ with the same energy as edge state shown in left panel of \cref{fig:haldane_remove_states1}.}
\label{fig:haldane_ext_pot}
\end{figure}
\subsection{Edge wave-packets and spectral measure} \label{sec:res:WPs}

We now compute edge wave-packets and investigate their spectral measures using the methods of \cref{Num_spec_meas}. We start by computing the spectral measure of the approximate edge state in the right panel of \cref{fig:haldane_remove_states1}. This is depicted in blue in \cref{fig:haldane_spectral_measure}, where we used a sixth-order kernel with smoothing parameter $\epsilon=0.01$. The truncation parameter is selected adaptively for each spectral parameter as outlined in \cref{Num_spec_meas}. We see that, as expected, the spectral measure is heavily weighted around the energy at which we computed the approximate edge state.  Next, we look at the spectral measure when we create a wave packet out of this approximate eigenstate. To create the wave-packet, we multiply the approximate eigenstate by a Gaussian centred away from the defect. This multiplication is equivalent to convolving the approximate edge state by a Gaussian in momentum space. Therefore, the spectral measure of the wave-packet will be spread out compared to the spectral measure of the approximate edge state. The orange part of the plot confirms this expected behaviour. We have repeated this process for the state shown in the right panel of \cref{fig:haldane_ext_pot}, but we omit these results since they are similar to those shown in \cref{fig:haldane_spectral_measure}. 

\begin{figure}
\centering
\begin{minipage}[b]{0.8\textwidth}
    \begin{overpic}[width=1\textwidth]{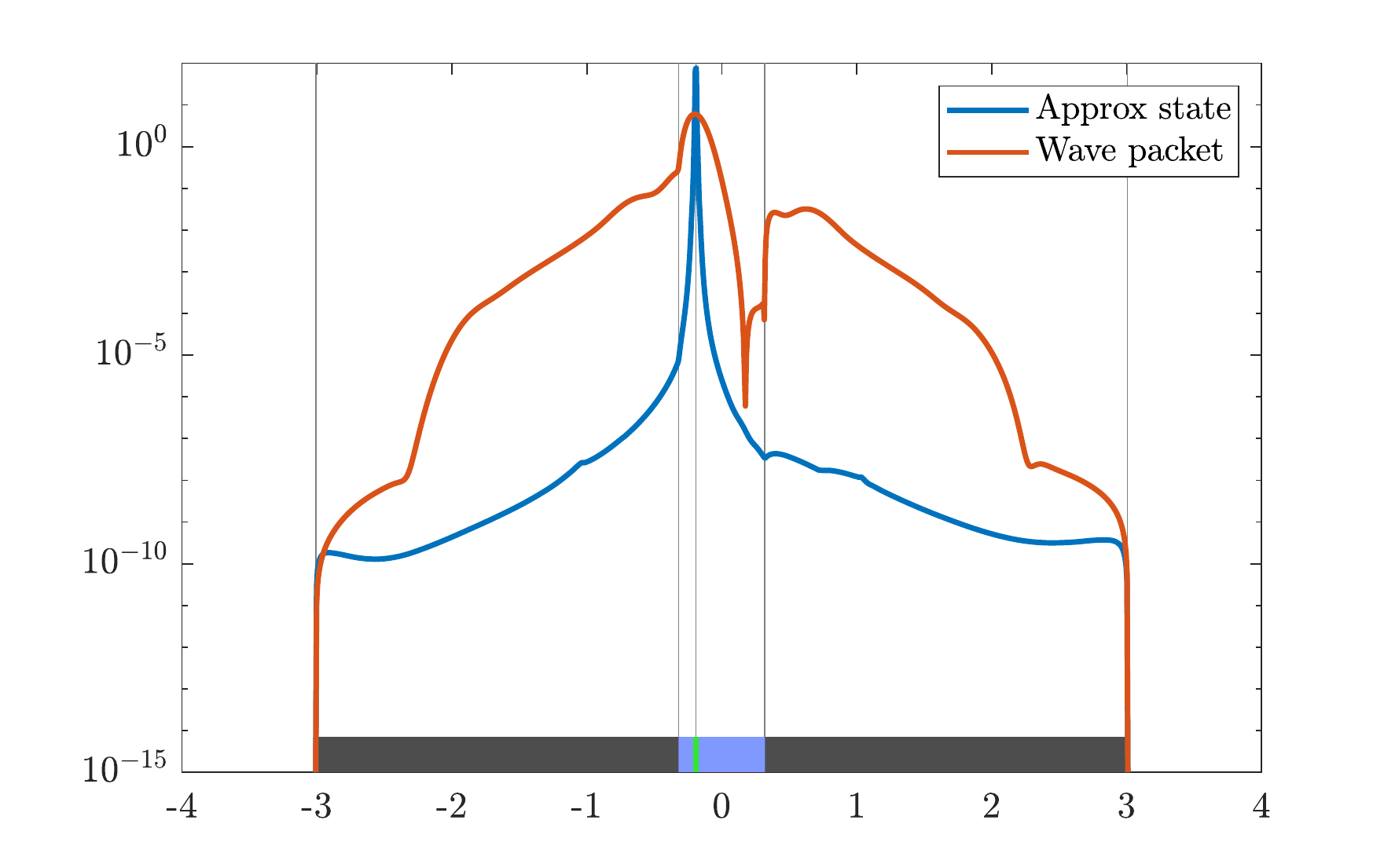}
		\put(51,-1){$E$}
     \end{overpic}
\end{minipage}
\caption{The (smoothed) spectral measure of an approximate eigenstate and a wave packet.  The spectrum of the Hamiltonian is plotted at the bottom with vertical grey lines added for clarity. The approximate state was computed at the green energy.  Notice the log scale on the vertical axis. The spectral measure of the edge wave-packet is much more spread out, as expected, with non-trivial support on the bulk spectrum.}
\label{fig:haldane_spectral_measure}
\end{figure}

We can probe how the spectral properties of the edge Hamiltonian change when the random potential \eqref{eq:V_dis} is added by computing the spectral measures of a delta function at a single site on the edge\footnote{The spectral measure for edge wave-packets with added random disorder behave similarly to \cref{fig:haldane_spectral_measure}.}, with and without the random potential. The results are shown in \cref{fig:haldane_spectral_measure2} for $t=1$, $t'=0.1$, $\phi=\pi/2$, $V=0.2$ and $w=1$. We observe that the potential causes the spectral measure to become significantly more singular in the bulk spectrum, while the edge spectrum changes shape but remains smooth. This is consistent with absolute continuity of the edge spectral measure \cite{Bols2021}, but we are not aware of work which would explain the behavior of the bulk spectral measure.

\begin{figure}
\centering
\begin{minipage}[b]{0.8\textwidth}
    \begin{overpic}[width=1\textwidth]{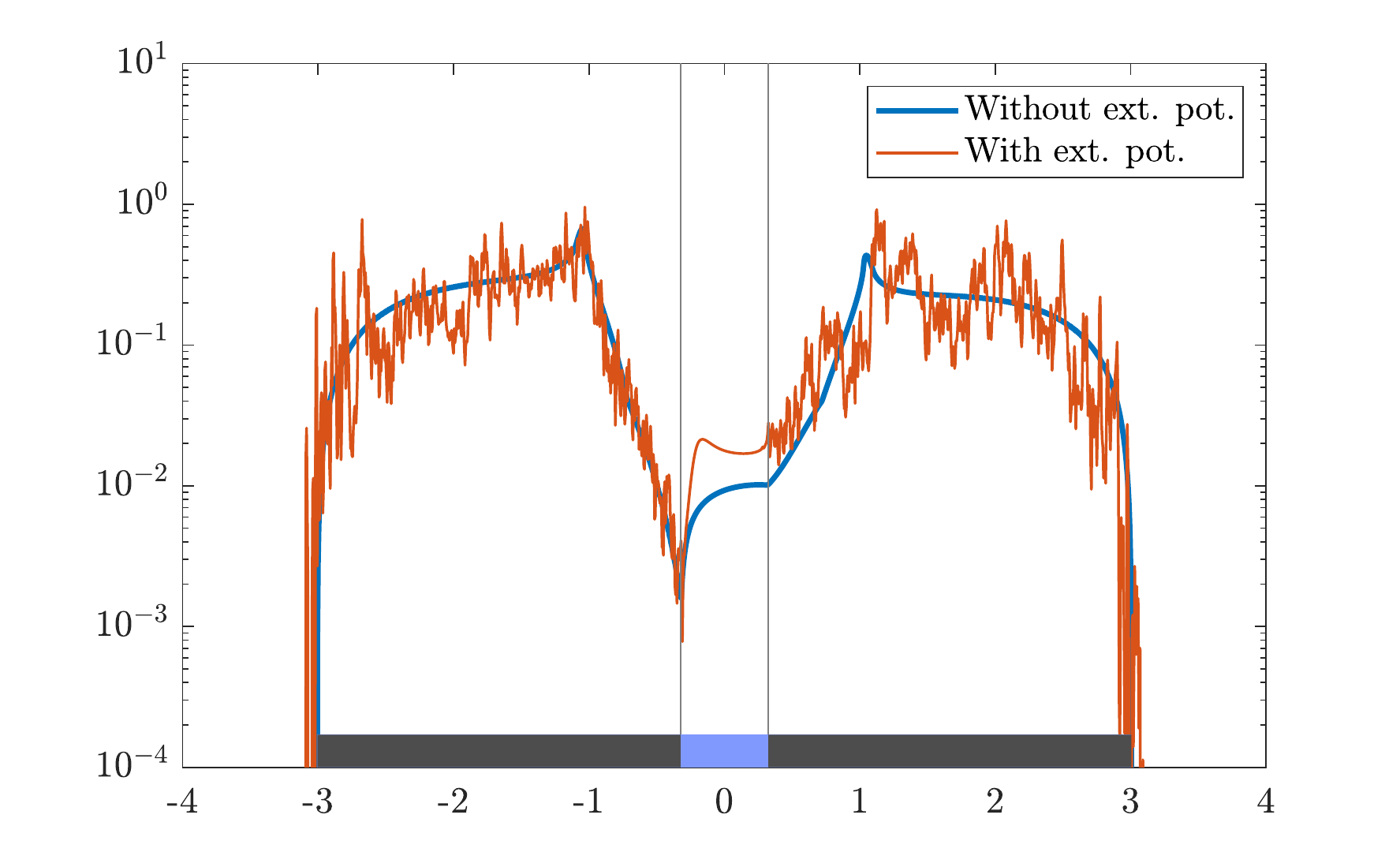}
		\put(51,-1){$E$}
		\put (20,53) { {bulk spectrum}}
	\put(28,51)  {\vector(1,-1){7}}
	\put(28,51)  {\vector(1,-2){5}}
	\put(28,51)  {\vector(2,-1){30}}
	\put(28,51)  {\vector(3,-1){33}}
	\put (55,19) { {edge spectrum}}
	\put(60,22)  {\vector(-1,1){7}}
	\put(60,22)  {\vector(-2,1){9}}
     \end{overpic}
\end{minipage}
\caption{The (smoothed) spectral measure of a site along the zig-zag edge. The boundaries of the edge spectrum of the Hamiltonian without the external potential are shown as vertical grey lines added for clarity. Notice that the addition of the random potential causes a more singular spectral measure on the bulk spectrum, but the spectral measure on the edge spectrum remains smooth.}
\label{fig:haldane_spectral_measure2}
\end{figure}

\subsection{Time propagation of edge wave-packets} \label{sec:res:time_prop}

\begin{figure}
\centering
\begin{minipage}[b]{1\textwidth}
    \begin{overpic}[width=1\textwidth,clip,trim={15mm 0mm 0mm 0mm}]{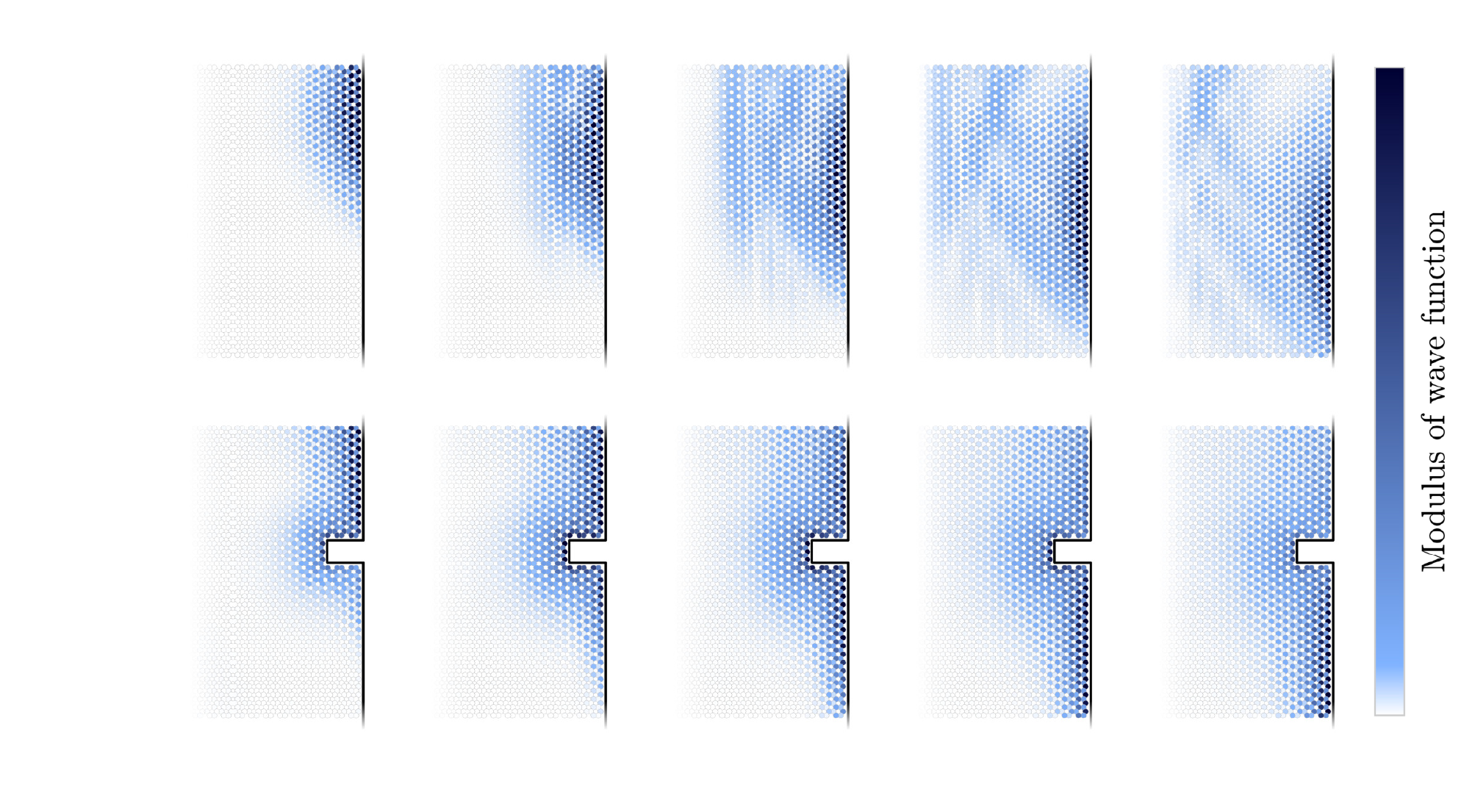}
		\put(10,55.5){$t=0$}
		\put(27,55.5){$t=10$}
		\put(45,55.5){$t=20$}
		\put(63,55.5){$t=30$}
		\put(81,55.5){$t=40$}
		\put(9,29){$t=160$}
		\put(26,29){$t=175$}
		\put(44,29){$t=190$}
		\put(62,29){$t=205$}
		\put(80,29){$t=220$}
     \end{overpic}
\end{minipage}
    \caption{Time propagation of a wave packet.  Top left is the initial state (a wave packet far away from a defect).  The images go forward in time as we move to the right. We observe the wave-packet clearly losing mass to bulk modes. Then we fast forward to the bottom left (and move our camera down) to see the remaining wave packet just starting to hit the defect. The wave packet then crawls around the defect as we go to the right. Note that (essentially) no further mass is lost to bulk modes as the wave-packet propagates around the defect.}
\label{fig:haldane_time_frames_remove_defect}
\end{figure}

\begin{figure}
\begin{minipage}[b]{1\textwidth}
    \begin{overpic}[width=1\textwidth,clip,trim={15mm 0mm 0mm 0mm}]{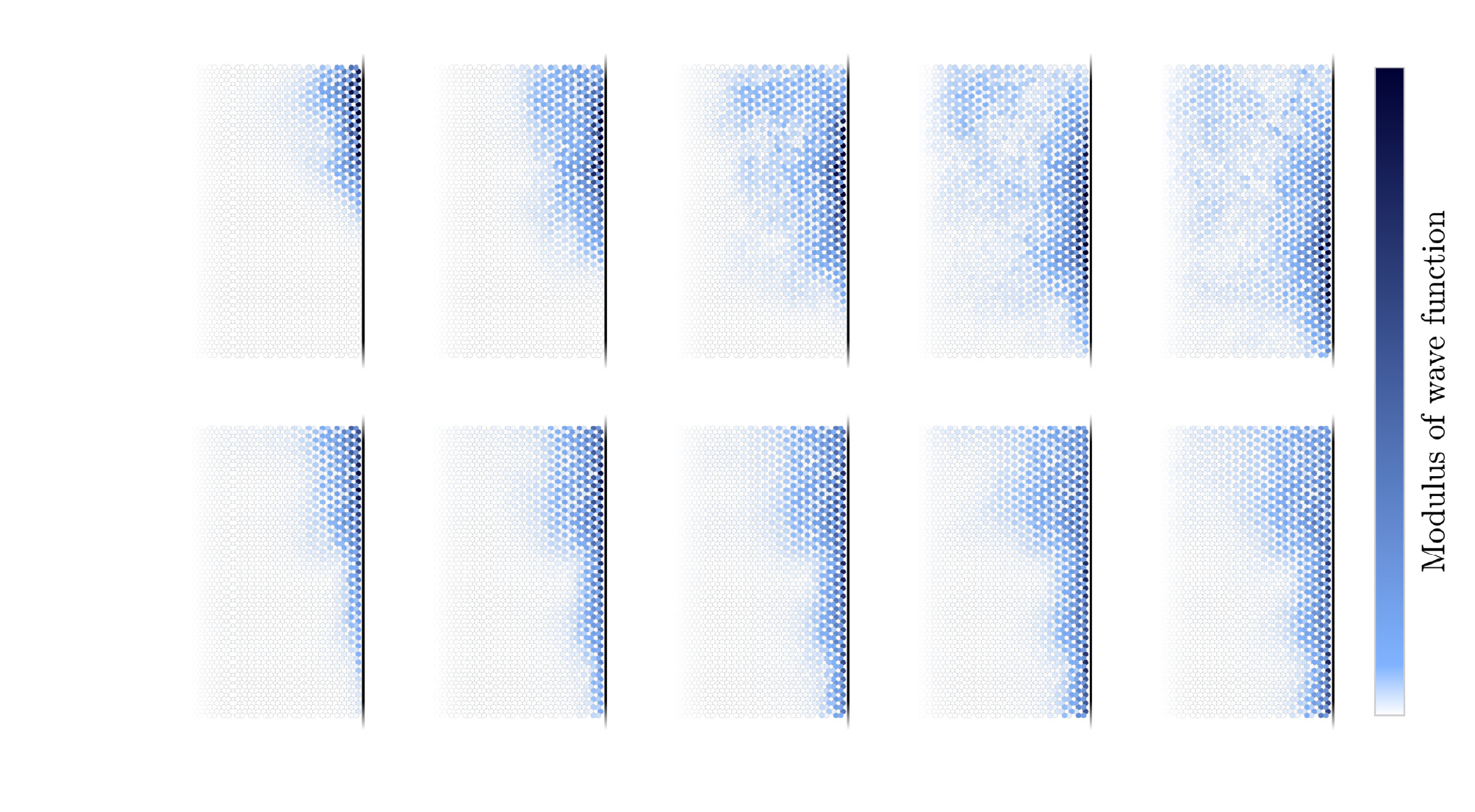}
		\put(10,55.5){$t=0$}
		\put(27,55.5){$t=10$}
		\put(45,55.5){$t=20$}
		\put(63,55.5){$t=30$}
		\put(81,55.5){$t=40$}
		\put(9,29){$t=160$}
		\put(26,29){$t=175$}
		\put(44,29){$t=190$}
		\put(62,29){$t=205$}
		\put(80,29){$t=220$}
     \end{overpic}
\end{minipage}
    \caption{The propagation of a wave packet in a system with a random perturbation to the external potential at every site.  Again, top left is the initial state and time goes forward as we move to the right. Then we fast forward (and move our camera down) to get to the bottom left picture. What is left of the wave packet then continues to propagate. Just as in \cref{fig:haldane_time_frames_remove_defect}, we observe loss of mass to bulk modes at the beginning of the simulation, and very little afterwards.}
\label{fig:haldane_time_frames_ext_pot}
\end{figure}

Finally, we look at the time propagation of wave packets along the edge for $t=1$, $t'=0.1$, $\phi=\pi/2$ and $V=0.2$. We begin with a wave packet produced as in the previous subsection -- by computing an approximate edge state and multiplying by a Gaussian far from (above) the defect. Due to the positive slope of the edge spectrum in \cref{fig:haldane_band_structure}, we expect that the edge states will move down the edge.  Additionally, since the edge states of the edge Hamiltonian with sites removed are similar to the unperturbed edge state, we expect them to behave similarly.  Lastly, by looking at the spectral measure of a wave packet in the previous subsection, we expect that some of the density of the wave packet will be lost to the bulk. However, most of it will continue along the edge. 

We compute the time evolution with a relative $\ell^2$ error bound of $10^{-5}$. \cref{fig:haldane_time_frames_remove_defect} shows the results. In the beginning, we observe part of the wave packet propagating into the bulk. This propagation is consistent with our computation of the spectral measure in \cref{fig:haldane_spectral_measure}, which showed that the wave packet has non-trivial spectral support among bulk modes. Interestingly, the remaining wave packet remains very coherent as it propagates around the defect, with little further coupling to the bulk. This is consistent with our computation of an approximate edge state of the model with defect, \cref{fig:haldane_remove_states1}, which showed the edge state snaking around the defect similarly to the edge state without a defect, and with our computation of the spectral measure in \cref{fig:haldane_spectral_measure}, which showed that the wave packet is primarily concentrated among such edge states.

We perform a similar test with a Hamiltonian that has disorder everywhere. Namely, we add a random potential described in \cref{sec:res:11} at each site with $w=1$. \cref{fig:haldane_time_frames_ext_pot} shows the results. Again, we observe part of the wave packet propagating to the bulk and then generally coherent propagation along the edge.

\appendix

\section{Pseudocode for the algorithms}
\label{append_psueod}

\begin{algorithm}[H]
\vspace{2mm}
\textbf{Input:} $\widetilde{H}$, $f$, $n$ and $G_n$ (with resolution $r_n\geq 1$).\\
\begin{algorithmic}[1]
\STATE For each $z\in G_n$, compute $\tilde F_n(z)=\ceil{2r_nF_n(z)}/(2r_n)$ and $v_n(z)$, the right-singular vector of $P_{f(n)}(\widetilde{H}-z)P_n$ corresponding to the smallest singular value.
\STATE For $z\in G_n$, if $\tilde F_n(z)\leq 1/2$, then set
\begin{align*}
I_z=\left\{w\in G_n:|w-z|\leq \tilde F_n(z)\right\},\quad M_z=\left\{w\in G_n:\tilde F_n(w)=\min_{x\in I_z}\tilde F_n(x)\right\}.
\end{align*}
Otherwise, set $M_z=\emptyset$.
\end{algorithmic} \vspace{2mm} \textbf{Output:} $\Gamma_n=\cup_{z\in G_n}M_z$ (approximation of spectrum), $E_n=\max_{z\in\Gamma_n}\tilde F_n(z)$ (error bound) and $V_n=\cup_{z\in\Gamma_n}\{v_n(z)\}$ (approximate states).
\caption{Computation of spectrum and the associated approximate states with error control via the method of \cite{Colbrook2019}. The computation of $\tilde F_n$ can be performed in parallel.
}\label{alg:spec_comp}
\end{algorithm}

\begin{algorithm}[H]
\textbf{Input:} $\widetilde{H}$, $\psi\in\mathcal{H}\cong l^2(\mathbb{N})$, $x_0\in\mathbb{R}$, $a_1,\dots,a_m\in\{z\in\mathbb{C}:{\rm Im}(z)>0\}$, and $\epsilon>0$. \\
\vspace{-4mm}
\begin{algorithmic}[1]
\STATE Solve the Vandermonde system \eqref{eqn:vandermonde_condition} for the residues $\alpha_1,\dots,\alpha_m\in\mathbb{C}$.
	\STATE Solve $(\widetilde{H}-(x_0-\epsilon a_j))u_{j}^\epsilon=\psi$ for $1\leq j\leq m$.
	\STATE Compute $\mu_{\psi}^\epsilon(x_0)=\frac{-1}{\pi}\mathrm{Im}\left(\sum_{j=1}^m\alpha_j\langle u_{j}^\epsilon,{\psi}\rangle\right)$.
\end{algorithmic} \textbf{Output:} $\mu_{\psi}^\epsilon(x_0)$.
\caption{A practical framework for evaluating an approximate spectral measure of an operator $\widetilde{H}$ at $x_0\in\mathbb{R}$ with respect to a vector $_{\psi}\in\mathcal{H}$ via the method of \cite{colbrook2019computing,colbrook2020}. For the examples of this paper, the resolvent is computed using the rectangular truncations in \eqref{rectangular_key}. However, the framework generalises to arbitrary self-adjoint operators given the ability to approximate solutions of the shifted linear systems and inner products.}\label{alg:spec_meas}
\end{algorithm}

\begin{algorithm}[H]
\textbf{Input:} $\widetilde{H}$, $\psi\in\mathcal{H}\cong l^2(\mathbb{N})$, $[a,b]\subset\mathbb{R}$, $\theta_1,\ldots,\theta_N\in[0,1]$, $w_1,\ldots,w_N\in\mathbb{R}_+$, $a_1,\dots,a_m\in\{z\in\mathbb{C}:{\rm Im}(z)>0\}$, and $\epsilon>0$. \\
\vspace{-4mm}
\begin{algorithmic}[1]
\STATE Solve the Vandermonde system \eqref{eqn:vandermonde_condition} for the residues $\alpha_1,\dots,\alpha_m\in\mathbb{C}$.
\STATE Set $\smash{z_\ell=a+\frac{b-a}{2}(1+\exp(\pi i\theta_\ell))}$ and $\smash{z_\ell'=-i\frac{b-a}{2}\exp(\pi i\theta_\ell)}$.
\STATE Solve $(\widetilde{H}-(z_\ell-\epsilon a_j))u_j^\epsilon=\psi$ and $(\widetilde{H}-(\bar z_\ell-\epsilon \bar a_j))v_j^\epsilon=\psi$ for $1\leq j\leq m$, and each $1\leq\ell\leq N$.
\STATE Solve $(\widetilde{H}-a\mp i\epsilon)u_{a,\pm}^\epsilon=\psi$ and $(\widetilde{H}-b\mp i\epsilon)u_{b,\pm}^\epsilon=\psi$.
\STATE Compute $\mathcal{E}^\epsilon_{(a,b)}\psi=\frac{-1}{2\pi i}\sum_{\ell=1}^N\sum_{j=1}^{m}w_\ell\left[\alpha_j z_\ell' u_j^\epsilon - \bar\alpha_j \bar z_\ell' v_j^\epsilon\right]-\frac{\epsilon}{2i}\left(c_l\left[u_{a,+}^\epsilon-u_{a,-}^\epsilon\right]+c_r\left[u_{b,+}^\epsilon-u_{b,-}^\epsilon\right]\right)$.
\end{algorithmic} \textbf{Output:} $\mathcal{E}^\epsilon_{(a,b)}\psi$.
\caption{An efficient method (see~\cref{Num_spec_proj}) to compute spectral projections associated with the projection-valued measure $\mathcal{E}$ of an operator $\widetilde{H}$.}\label{alg:spec_proj}
\end{algorithm}

\begin{table*}[h!]
\renewcommand{\arraystretch}{1.6}
\centering
\begin{tabular}{l|c|c}
$m$ & $\pi K(x)\prod_{j=1}^m(x-a_j)(x-\overline{a_j})$ & $\{\alpha_1,\ldots,\alpha_{\ceil{m/2}}\}$\\
\hline
$2$ & $\frac{20}{9}$ & $\left\{\frac{1+3i}{2}\right\}$\\
$3$ &$-\frac{5}{4}x^2+\frac{65}{16}$ & $\left\{-2+i,5\right\}$\\
$4$ & $-\frac{3536}{625}x^2+\frac{21216}{3125}$ & $\left\{\frac{-39-65i}{24},\frac{17+85i}{8}\right\}$\\
$5$ & $\frac{130}{81}x^4 - \frac{12350}{729}x^2 + \frac{70720}{6561}$ & $\left\{\frac{15-10i}{4},\frac{-39+13i}{2},\frac{65}{2}\right\}$\\
$6$ & $\frac{1287600}{117649}x^4 - \frac{34336000}{823543}x^2 + \frac{667835200}{40353607}$ & $\left\{\frac{725+1015i}{192},\frac{-2775-6475i}{192},\frac{1073+7511i}{96}\right\}$\\
\end{tabular}
\caption{The numerators and residues of the first six rational kernels with equispaced poles. We give the first ${\ceil{m/2}}$ residues because the others follow by the symmetry $\alpha_{m+1-j}=\overline{\alpha_j}$.\label{tab_kernel}}\renewcommand{\arraystretch}{1}
\end{table*}

\section{Convergence for smoothed projection-valued measures}\label{sec:PVM_converge}

In this appendix, we prove~\cref{thm:Stones_formula_generalized}. We begin by establishing convergence and calculating the endpoint contributions $c_l$ and $c_r$ for rational kernels with conjugate pole pairs.

\begin{proposition}\label{thm:GSF1}
Given a projection-valued measure $\mathcal{E}$ (see~\eqref{eqn:PVMdiag}) and $m$th order kernel $K$ with conjugate pole pairs (see~\eqref{eqn:high_order_kernel}), then for any $[a,b]\subset\mathbb{R}$, we have that
$$
\lim_{\epsilon\rightarrow 0^+}\inty{a}{b}{[K_{\epsilon}*\mathcal{E}](x)}{x} = \mathcal{E}((a,b))+c_l\mathcal{E}(\{a\})+c_r\mathcal{E}(\{b\}),
$$
where $c_l=\smash{\pi^{-1}\sum_{j=1}^m\beta_j(\pi-\arg(a_j))+i\gamma_j\log|a_j|}$ and $c_r=\smash{\pi^{-1}\sum_{j=1}^m\beta_j\arg(a_j)-i\gamma_j\log|a_j|}$.
\end{proposition}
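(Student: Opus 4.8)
The plan is to push the $x$-integration through the spectral measure so that the operator-valued statement reduces to a single scalar primitive. Writing the convolution via the spectral theorem as $[K_\epsilon*\mathcal{E}](x)=\int_{\mathrm{Sp}(H)}K_\epsilon(x-\lambda)\,d\mathcal{E}(\lambda)$ and interchanging the (finite-length) $x$-integral with the spectral integral gives
\[
\inty{a}{b}{[K_{\epsilon}*\mathcal{E}](x)}{x}=\int_{\mathrm{Sp}(H)}\Phi_\epsilon(\lambda)\,d\mathcal{E}(\lambda),\qquad \Phi_\epsilon(\lambda):=\inty{a}{b}{K_\epsilon(x-\lambda)}{x}=\int_{(a-\lambda)/\epsilon}^{(b-\lambda)/\epsilon}K(u)\,du,
\]
where the last equality uses $K_\epsilon(\cdot)=\epsilon^{-1}K(\cdot/\epsilon)$ and the substitution $u=(x-\lambda)/\epsilon$. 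The interchange is justified by testing against vectors, i.e. applying Fubini to each finite scalar measure $\Omega\mapsto\langle\mathcal{E}(\Omega)\psi,\psi\rangle$ together with $K\in L^1(\mathbb{R})$.

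First I would establish the pointwise limit of the multiplier $\Phi_\epsilon$. Since $\int_{\mathbb{R}}K=1$ by normalisation (i) and $K\in L^1$, the moving limits of integration give three regimes: for $\lambda\in(a,b)$ the interval of integration expands to all of $\mathbb{R}$ so $\Phi_\epsilon(\lambda)\to1$; for $\lambda\notin[a,b]$ both limits run off to the same infinity and the $L^1$-tail forces $\Phi_\epsilon(\lambda)\to0$; and at the endpoints $\Phi_\epsilon(a)\to\int_0^\infty K$ and $\Phi_\epsilon(b)\to\int_{-\infty}^0 K$. Hence the limiting multiplier $\Phi$ is the indicator of $(a,b)$ together with boundary weights $c_l=\int_0^\infty K$ and $c_r=\int_{-\infty}^0 K$ at $a$ and $b$ respectively.

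The heart of the proof is evaluating these two one-sided integrals from the partial-fraction representation~\eqref{eqn:high_order_kernel} of $K$. I would integrate term by term using the primitive $\int\frac{du}{u-a_j}=\mathrm{Log}(u-a_j)$; because each $a_j$ has nonzero imaginary part the path $[0,R]$ stays off the branch cut, so the principal logarithm is continuous along it and $\int_0^R\frac{du}{u-a_j}=\mathrm{Log}(R-a_j)-\mathrm{Log}(-a_j)$. The individual conjugate pairs decay only like $1/u$, but the divergent $\log R$ contributions carry the factor $\sum_j(\alpha_j-\overline{\alpha_j})=2i\sum_j\gamma_j$, which vanishes because the first row of the Vandermonde system~\eqref{eqn:vandermonde_condition} gives $\sum_j\alpha_j=1\in\mathbb{R}$; thus the divergences cancel and the limit exists. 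Writing $a_j=|a_j|e^{i\arg(a_j)}$ with $\arg(a_j)\in(0,\pi)$ and reading off $\mathrm{Log}(-a_j)=\log|a_j|+i(\arg(a_j)-\pi)$ and $\mathrm{Log}(-\overline{a_j})=\log|a_j|+i(\pi-\arg(a_j))$, the surviving boundary terms assemble into $c_l$ and $c_r$. Two consistency checks guide the bookkeeping: $c_l+c_r=\sum_j\beta_j=1$ (again from $\sum_j\alpha_j=1$), and in the symmetric configuration $a_{m+1-j}=-\overline{a_j}$ the kernel $K$ becomes even, so $\int_0^\infty K=\int_{-\infty}^0 K$ and both constants collapse to $1/2$.

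Finally I would pass to the limit inside the spectral integral. Since $|\Phi_\epsilon(\lambda)|\le\|K\|_{L^1}$ uniformly in $\epsilon$ and $\lambda$, dominated convergence applied to each finite scalar measure $\langle\mathcal{E}(\cdot)\psi,\psi\rangle$ yields $\langle\int\Phi_\epsilon\,d\mathcal{E}\,\psi,\psi\rangle\to\langle\int\Phi\,d\mathcal{E}\,\psi,\psi\rangle$ for all $\psi$, i.e. weak-operator convergence to $\int\Phi\,d\mathcal{E}=\mathcal{E}((a,b))+c_l\mathcal{E}(\{a\})+c_r\mathcal{E}(\{b\})$, which is the claim. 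I expect the main obstacle to be the branch bookkeeping in the third step — keeping the $\arg(a_j)$ and $\log|a_j|$ contributions consistent while simultaneously verifying that the $\log R$ pieces cancel through $\sum_j\gamma_j=0$ — whereas the Fubini interchange and the dominated-convergence passage are routine once the uniform bound $\|K\|_{L^1}$ is in hand.
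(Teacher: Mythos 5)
Your proposal is correct in substance and follows essentially the same route as the paper's proof: reduce the operator identity, via Fubini applied to the finite scalar measures $\langle\mathcal{E}(\cdot)\psi,\psi\rangle$, to a scalar multiplier integrated against $\mathcal{E}$; pass the limit $\epsilon\rightarrow 0^+$ inside by dominated convergence; and evaluate the multiplier by integrating the partial fractions of~\eqref{eqn:high_order_kernel} with complex logarithms, the key cancellations coming from the first row of~\eqref{eqn:vandermonde_condition}, i.e.\ $\sum_j\beta_j=1$ and $\sum_j\gamma_j=0$. The one genuine difference is organisational: you rescale $u=(x-\lambda)/\epsilon$ first, which identifies the endpoint weights as the one-sided integrals $c_l=\int_0^\infty K(u)\,\dee u$ and $c_r=\int_{-\infty}^0 K(u)\,\dee u$, whereas the paper keeps $\epsilon$ inside the antiderivatives and extracts the endpoint contributions from $\log|\epsilon a_j|=\log\epsilon+\log|a_j|$. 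Your normalisation buys two structural facts for free: $c_l+c_r=\int_{\mathbb{R}}K=1$, and---since $K$ is real-valued on $\mathbb{R}$---both constants are necessarily \emph{real}.

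That last observation matters, because it means your third step, carried out to the end, will \emph{not} reproduce the constants as displayed in the statement. Completing your bookkeeping with $\mathrm{Log}(-a_j)=\log|a_j|+i(\arg(a_j)-\pi)$ and $\mathrm{Log}(-\bar a_j)=\log|a_j|+i(\pi-\arg(a_j))$ yields
\[
c_l=\frac{1}{\pi}\sum_{j=1}^m\left[\beta_j\left(\pi-\arg(a_j)\right)-\gamma_j\log|a_j|\right],
\qquad
c_r=\frac{1}{\pi}\sum_{j=1}^m\left[\beta_j\arg(a_j)+\gamma_j\log|a_j|\right],
\]
which are real and sum to $1$, whereas the stated formulas carry imaginary terms $\pm i\gamma_j\log|a_j|$ (traceable to a dropped factor of $-i$ on the logarithmic terms in the simplification~\eqref{eqn:inner_int} of the paper's proof), and these are generically nonzero for non-symmetric pole configurations. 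A concrete check: for $m=2$, $a_1=i$, $a_2=1+i$, the system~\eqref{eqn:vandermonde_condition} gives $\alpha_1=1+i$, $\alpha_2=-i$, and direct integration gives $c_l=\tfrac12+\tfrac{\log 2}{2\pi}$, matching the corrected formula but not the stated one. So treat this as a defect of the statement rather than of your method---your route actually detects it---but do not assert that the boundary terms ``assemble into'' the displayed $c_l$ and $c_r$ without this correction. The discrepancy is invisible in the case used in practice (symmetric kernels, $a_{m+1-j}=-\bar a_j$), since then $\sum_j\gamma_j\log|a_j|=0$ and both versions give $c_l=c_r=1/2$; note, though, that your shortcut for that case (``$K$ is even'') implicitly uses the residue symmetry $\alpha_{m+1-j}=\bar\alpha_j$ of~\cref{lem:GSF2}, which itself requires an argument from the Vandermonde system.
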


\begin{proof}
First, integrate both sides of~\eqref{gen_stone_comp2} over the interval, substitute the resolvent identity in~\eqref{eqn:PVMres} on the right-hand side, and apply Fubini's theorem to obtain
$$
\inty{a}{b}{[K_{\epsilon}*\mathcal{E}](x)}{x} = \frac{-1}{2\pi i}\inty{\mathrm{Sp}(H)}{}{\inty{a}{b}{\sum_{j=1}^{m}\left[\frac{\alpha_j}{\lambda-(x-\epsilon a_j)}-\frac{\bar\alpha_j}{\lambda-(x-\epsilon \bar a_j)}\right]}{x}}{\mathcal{E}(\lambda)}.
$$
To establish the theorem, we take the limit $\epsilon\rightarrow 0$ and apply the dominated convergence theorem to interchange the limit and the outer integral. This is permissible due to the decay condition in part (iii) of \cref{def:mth_order_kernel}. We claim that, as $\epsilon\rightarrow 0$, the inner integral converges to $-2\pi i$ when $\lambda\in(a,b)$, $0$ when $\lambda\not\in[a,b]$, and $(-2\pi i)c_l$ or $-(2\pi i)c_r$ when $\lambda=a$ or $\lambda=b$, respectively.

We compute the inner integral directly by integrating the sum term by term, so that
$$
\inty{a}{b}{\sum_{j=1}^m\left[\frac{\alpha_j}{\lambda-(x-\epsilon a_j)}-\frac{\bar\alpha_j}{\lambda-(x-\epsilon \bar a_j)}\right]}{x} 
=\sum_{j=1}^m\left[\bar\alpha_j\log\left(\lambda-(x-\epsilon\bar a_j)\right) -\alpha_j\log\left(\lambda-(x-\epsilon a_j)\right)\right]\big|^b_a.
$$
Using the identity $\log(z)=\log|z|+i\arg(z)$ to simplify, we find that the right-hand side is equal to
\begin{equation}\label{eqn:inner_int}
2\sum_{j=1}^m{\rm Im}(\alpha_j)\left[\log|\lambda-b+\epsilon a_j| -\log|\lambda-a+\epsilon a_j|\right]-i{\rm Re}(\alpha_j)\left[\arg(\lambda-b+\epsilon a_j)-\arg(\lambda-a+\epsilon a_j)\right].
\end{equation}
To calculate the limit, note that the first row of~\eqref{eqn:vandermonde_condition} states that $\sum_{j=1}^m \alpha_j=1$. In particular, $\smash{\sum_{j=1}^m {\rm Re}(\alpha_j)=1}$ and $\smash{\sum_{j=1}^m{\rm Im}(\alpha_j)=0}$. Then, the right-hand terms involving $\arg$ evaluate to
\begin{equation}\label{eqn:arg_terms}
\lim_{\epsilon\rightarrow 0}\sum_{j=1}^m{\rm Re}(\alpha_j)\left[\arg(\lambda-b+\epsilon a_j)-\arg(\lambda-a+\epsilon a_j)\right]
=\begin{cases}
\pi, \quad a<\lambda<b, \\
\sum_{j=1}^m{\rm Re}(\alpha_j)(\pi-\arg(a_j)), \quad \lambda=a, \\
\sum_{j=1}^m{\rm Re}(\alpha_j)\arg(a_j), \quad \lambda=b, \\
0, \quad\text{otherwise}.
\end{cases}
\end{equation}
On the other hand, the left-hand terms involving logarithms vanish when $\lambda\neq a$ and $\lambda\neq b$, that is,
\begin{equation}\label{eqn:log_terms1}
\lim_{\epsilon\rightarrow 0}\sum_{j=1}^m{\rm Im}(\alpha_j)\left[\log|\lambda-b+\epsilon a_j| -\log|\lambda-a+\epsilon a_j|\right]=\left[\log|\lambda-b|-\log|\lambda-a|\right]\sum_{j=1}^m{\rm Im}(\alpha_j)=0.
\end{equation}
Finally, when $\lambda=b$ we expand $\log|\epsilon a_j|=\log|\epsilon|+\log|a_j|$ and perform a similar calculation to obtain
\begin{equation}\label{eqn:log_terms2}
\lim_{\epsilon\rightarrow 0}\sum_{j=1}^m{\rm Im}(\alpha_j)\left[\log|\epsilon a_j|-\log|b-a+\epsilon a_j|\right]=\sum_{j=1}^m{\rm Im}(\alpha_j)\log|a_j|.
\end{equation}
We omit the analogous calculation for $\lambda=a$, which only differs by a minus sign. Collecting the results in~\eqref{eqn:inner_int}, \eqref{eqn:arg_terms}, \eqref{eqn:log_terms1} and \eqref{eqn:log_terms2} establishes the claim and concludes the proof the proposition.
\end{proof}

In practice, we usually employ symmetric rational kernels whose poles have reflection symmetry over the imaginary axis. In this case, the residues are symmetric over the real axis, and the constants $c_l$ and $c_r$ simplify considerably. These statements are made precise in the next two lemmas.

\begin{lemma}\label{lem:GSF2}
If the poles satisfy $a_{m+1-j}=-\bar a_j$, then the residues satisfy $\alpha_{m+1-j}=\bar\alpha_j$.
\end{lemma}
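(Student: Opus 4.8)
The plan is to exploit the \emph{uniqueness} of the solution to the Vandermonde system \eqref{eqn:vandermonde_condition}. Since the poles $a_1,\dots,a_m$ are distinct, the coefficient matrix is the transpose of a Vandermonde matrix with nonzero determinant $\prod_{i<j}(a_j-a_i)$, so the residues $\alpha_1,\dots,\alpha_m$ are determined uniquely by the $m$ equations $\sum_{j=1}^m a_j^k\alpha_j=\delta_{k,0}$ for $0\le k\le m-1$. It therefore suffices to exhibit a \emph{second} sequence that solves the same system and is built from the $\bar\alpha_j$ taken in reflected order; uniqueness then forces the two sequences to coincide, which is exactly the desired identity.

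Concretely, first I would set $\tilde\alpha_j:=\bar\alpha_{m+1-j}$ and verify that $\{\tilde\alpha_j\}_{j=1}^m$ satisfies \eqref{eqn:vandermonde_condition}. Substituting into the $k$th equation and reindexing with $i=m+1-j$ gives $\sum_{j=1}^m a_j^k\tilde\alpha_j=\sum_{i=1}^m a_{m+1-i}^k\,\bar\alpha_i$. I would then invoke the pole-symmetry hypothesis $a_{m+1-i}=-\bar a_i$ to rewrite the right-hand side as $(-1)^k\sum_{i=1}^m \bar a_i^{\,k}\,\bar\alpha_i=(-1)^k\,\overline{\sum_{i=1}^m a_i^k\alpha_i}=(-1)^k\,\overline{\delta_{k,0}}$, where the middle equality uses that complex conjugation commutes with the finite sum and with taking powers.

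The only remaining point is to check that $(-1)^k\,\overline{\delta_{k,0}}=\delta_{k,0}$ for each $0\le k\le m-1$. For $k=0$ one has $(-1)^0=1$ and $\overline{\delta_{0,0}}=1$, yielding $1$; for $1\le k\le m-1$ the original right-hand side is $0$, so the spurious sign $(-1)^k$ merely multiplies zero and is harmless. Hence $\{\tilde\alpha_j\}$ solves precisely the same system as $\{\alpha_j\}$, and uniqueness gives $\tilde\alpha_j=\alpha_j$, i.e.\ $\bar\alpha_{m+1-j}=\alpha_j$, which is the claimed relation $\alpha_{m+1-j}=\bar\alpha_j$.

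There is no serious obstacle here: the argument is a short symmetry-plus-uniqueness computation. The single place demanding care is tracking the stray factor $(-1)^k$ and confirming it cannot corrupt the system, which holds exactly because the target vector is $e_1=(1,0,\dots,0)^\top$ and the sign acts trivially on its zero entries. Invertibility of the Vandermonde matrix (guaranteed by the distinctness of the poles) is what makes the uniqueness step available, and everything else is routine reindexing and conjugation.
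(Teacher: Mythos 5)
Your proof is correct, and it takes a genuinely different route from the paper. The paper proves the lemma via Cramer's rule: it writes $\alpha_j={\rm det}(V_j)/{\rm det}(V)$, shows that ${\rm det}(V)$ is real by pairing conjugate factors in the product formula $\prod_{i<j}(a_j-a_i)$, and then shows ${\rm det}(V_{m+1-j})=\overline{{\rm det}(V_j)}$ through a Laplace expansion, factoring of columns, and several re-indexing manipulations of products. Your argument bypasses all of this determinant bookkeeping: you define $\tilde\alpha_j:=\bar\alpha_{m+1-j}$, verify by re-indexing and the pole symmetry $a_{m+1-i}=-\bar a_i$ that
\begin{equation*}
\sum_{j=1}^m a_j^k\tilde\alpha_j=(-1)^k\,\overline{\sum_{i=1}^m a_i^k\alpha_i}=(-1)^k\delta_{k,0}=\delta_{k,0},\qquad 0\le k\le m-1,
\end{equation*}
and conclude by uniqueness of the solution to the (invertible, since the poles are distinct) Vandermonde system. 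Your handling of the stray sign $(-1)^k$ is exactly right: it acts trivially because the right-hand side $e_1$ is real and vanishes on every entry with $k\ge 1$. What your approach buys is brevity and transparency — it is a few lines of linear algebra where the paper needs roughly a page of determinant identities — and it makes clear which structural features matter (distinct poles for invertibility, a real right-hand side supported where the sign is harmless). The paper's computation yields explicit determinant formulas along the way, but these are not reused elsewhere, so nothing is lost by your shortcut.
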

\begin{proof}
We proceed by calculating the residues directly from the Vandermonde system in~\eqref{eqn:vandermonde_condition}. By Cramer's rule, $\alpha_j={\rm det}(V_j)/{\rm det}(V)$, where $V$ is the transposed Vandermonde matrix and $V_j$ is identical except that the $j$th column is replaced by the unit vector on the right-hand side of~\eqref{eqn:vandermonde_condition}. We claim that ${\rm det}(V)$ is real and ${\rm det}(V_{m+1-j})=\overline{{\rm det}(V_j)}$, which together imply that $\alpha_{m+1-j}=\bar\alpha_j$.

The determinant of $V$ is given by the well-known formula $\smash{{\rm det}(V)=\prod_{1\leq i<j\leq m} (a_j-a_i)}$. Pairing conjugate terms and noting that $(a_{m+1-i}-a_{m+1-j})=(\bar a_j-\bar a_i)$ by the reflection symmetry hypothesis, we find that the determinant is real because
$$
{\rm det}(V)=\prod_{1\leq i<j\leq m} (a_j-a_i) = \prod_{1\leq i<j\leq\lceil m/2\rceil}(a_j-a_i)(a_{m+1-i}-a_{m+1-j})
=\prod_{1\leq i<j\leq\lceil m/2\rceil}|a_j-a_i|^2.
$$
To calculate the determinant of $V_j$, note that a Laplace expansion down the $j$th column yields
\begin{equation}\label{eqn:det_vj_1}
{\rm det}(V_j)=(-1)^{j-1}\begin{vmatrix}
a_1 & \dots & a_{j-1} & a_{j+1} & \dots & a_{m} \\
\vdots & & \vdots & \vdots &  & \vdots \\
a_1^{m-1} & \dots & a_{j-1}^{m-1} & a_{j+1}^{m-1} & \dots & a_{m}^{m-1}
\end{vmatrix}
=(-1)^{j-1}\prod\limits_{\substack{1\leq i\leq m,\\ i\neq j}}a_i\prod\limits_{\substack{1\leq i<k\leq m,\\ i,k\neq j}}(a_k-a_i).
\end{equation}
The second equality follows by factoring the poles $a_1,\ldots,a_{j-1},a_{j+1},\ldots a_m$ out of their respective columns and applying the formula for the determinant of the resulting $(m-1)\times(m-1)$ Vandermonde system (note that the indices are $1,\ldots,j-1,j+1,\ldots,m$). Since the poles are distinct, we may write
\begin{equation}\label{eqn:det_vj_2}
\prod\limits_{\substack{1\leq i<k\leq m,\\ i,k\neq j}}(a_k-a_i)
={\rm det}(V)\left(\prod_{1\leq i<j}(a_j-a_i)\prod_{j<i\leq m}(a_i-a_j)\right)^{-1}.
\end{equation}
Applying the reflection symmetry hypothesis and re-indexing with $i'=m+1-i$, we calculate that
\begin{equation}\label{eqn:det_vj_3}
\prod_{1\leq i<j}(a_j-a_i)\prod_{j<i\leq m}(a_i-a_j)=\prod_{1\leq i'<m+1-j}(\bar a_{m+1-j}-\bar a_{i'})\prod_{m+1-j<i'\leq m}(\bar a_{i'}-\bar a_{m+1-j}).
\end{equation}
Since ${\rm det}(V)$ is real-valued, it follows that $\smash{\prod_{\substack{1\leq i<k\leq m,\\ i,k\neq j}}(a_k-a_i)= \prod_{\substack{1\leq i<k\leq m,\\ i,k\neq m+1-j}}\overline{(a_k-a_i)}}$. Similarly,
\begin{equation}\label{eqn:det_vj_4}
\prod\limits_{\substack{1\leq i\leq m,\\ i\neq j}}a_i = (a_j)^{-1}\prod\limits_{1\leq i\leq m}a_i = (-\bar a_{m+1-j})^{-1}\prod\limits_{1\leq i\leq m}(-\bar a_{m+1-i})^{-1}=(-1)^{m+1}\prod\limits_{\substack{1\leq i\leq m,\\ i\neq m+1-j}}\bar a_i.
\end{equation}
Compiling the calculations in~\eqref{eqn:det_vj_1}, \eqref{eqn:det_vj_2}, \eqref{eqn:det_vj_3}, and \eqref{eqn:det_vj_4} establishes the claim, as we conclude that
$$
{\rm det}(V_j)=(-1)^{j-1}\prod\limits_{\substack{1\leq i\leq m,\\ i\neq j}}\! \! \! a_i\! \! \! \prod\limits_{\substack{1\leq i<k\leq m,\\ i,k\neq j}}(a_k-a_i)=(-1)^{m-j}\prod\limits_{\substack{1\leq i\leq m,\\ i\neq m+1-j}}\bar \! \! \! a_i\! \! \! \prod\limits_{\substack{1\leq i<k\leq m,\\ i,k\neq m+1-j}}\overline{(a_k-a_i)}=\overline{{\rm det}(V_{m+1-j})}.
$$
Therefore, Cramer's rule implies that the residues satisfy $\alpha_{m+1-j}=\bar\alpha_j$, for each $j=1,\ldots,m$.
\end{proof}

With the conjugate symmetry of the residues in hand, we can now show that the constants in~\cref{thm:GSF1} simplify significantly for symmetric kernels. Recall that $\smash{\sum_{j=1}^m\beta_j=1}$ and $\smash{\sum_{j=1}^m\gamma_j=0}$.

\begin{lemma}\label{lem:GSF3}
If the poles satisfy $a_{m+1-j}=-\bar a_j$, then $c_l=c_r=1/2$ in~\cref{thm:GSF1}.
\end{lemma}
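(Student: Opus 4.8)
The plan is to combine the conjugate symmetry of the residues from~\cref{lem:GSF2} with the reflection symmetry of the poles, and then read off the two constants from~\cref{thm:GSF1} after a single reindexing of the sum. First I would record the geometric consequences of the hypothesis $a_{m+1-j}=-\bar a_j$. Writing $a_j=r_je^{i\theta_j}$ with $r_j>0$ and $\theta_j=\arg(a_j)\in(0,\pi)$ (since the poles lie in the upper half-plane), we have $-\bar a_j=r_je^{i(\pi-\theta_j)}$, so that
\begin{equation*}
|a_{m+1-j}|=|a_j|\qquad\text{and}\qquad \arg(a_{m+1-j})=\pi-\arg(a_j).
\end{equation*}
At the same time, \cref{lem:GSF2} gives $\alpha_{m+1-j}=\bar\alpha_j$, which in terms of $\alpha_j=\beta_j+i\gamma_j$ reads $\beta_{m+1-j}=\beta_j$ and $\gamma_{m+1-j}=-\gamma_j$.

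Next I would show $c_l=c_r$ by reindexing the sum defining $c_r$. Substituting $j\mapsto m+1-j$ in the expression $\pi c_r=\sum_{j=1}^m\beta_j\arg(a_j)-i\gamma_j\log|a_j|$ and applying the four identities above, the factor $\beta_{m+1-j}=\beta_j$ survives, $\arg$ turns into $\pi-\arg(a_j)$, $\log|a_j|$ is unchanged, and the sign flip $\gamma_{m+1-j}=-\gamma_j$ converts the $-i\gamma_j$ term into $+i\gamma_j$. This reproduces precisely the expression $\pi c_l=\sum_{j=1}^m\beta_j(\pi-\arg(a_j))+i\gamma_j\log|a_j|$, so $c_l=c_r$.

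Finally I would compute the sum $c_l+c_r$ directly. Adding the two formulas in~\cref{thm:GSF1}, the arguments combine as $\beta_j(\pi-\arg(a_j))+\beta_j\arg(a_j)=\pi\beta_j$, while the logarithmic terms $+i\gamma_j\log|a_j|$ and $-i\gamma_j\log|a_j|$ cancel term by term, giving $\pi(c_l+c_r)=\pi\sum_{j=1}^m\beta_j$. The first row of the Vandermonde system~\eqref{eqn:vandermonde_condition} gives $\sum_{j=1}^m\alpha_j=1$, hence $\sum_{j=1}^m\beta_j=1$, so $c_l+c_r=1$. Together with $c_l=c_r$ this yields $c_l=c_r=1/2$. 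There is no real obstacle here: the computation is entirely mechanical once the pole and residue symmetries are in place, and the only point requiring a little care is tracking the sign conventions for $\arg$ (ensuring all arguments stay in $(0,\pi)$) when invoking $\arg(a_{m+1-j})=\pi-\arg(a_j)$.
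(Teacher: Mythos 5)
Your proposal is correct, and it finishes the argument by a genuinely different (and slightly cleaner) route than the paper. Both proofs start from the same ingredients: the pole symmetry $a_{m+1-j}=-\bar a_j$ gives $|a_{m+1-j}|=|a_j|$ and $\arg(a_{m+1-j})=\pi-\arg(a_j)$, \cref{lem:GSF2} gives $\beta_{m+1-j}=\beta_j$ and $\gamma_{m+1-j}=-\gamma_j$, and the first row of \eqref{eqn:vandermonde_condition} gives $\sum_{j=1}^m\beta_j=1$. The paper then proceeds in three steps: it first proves the logarithmic sums vanish individually, $\sum_{j=1}^m\gamma_j\log|a_j|=0$, by pairing conjugate terms (with a footnoted edge case $\gamma_{\lceil m/2\rceil}=0$ for odd $m$); it then identifies $c_l=c_r=\pi^{-1}\sum_{j=1}^m\beta_j\arg(a_j)$; and finally it evaluates that sum via the pairing $\beta_j\arg(a_j)+\beta_{m+1-j}\arg(a_{m+1-j})=\pi\beta_j$, carrying out separate computations for even and odd $m$. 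You instead prove $c_l=c_r$ in a single stroke by applying the involution $j\mapsto m+1-j$ to the \emph{entire} expression for $c_r$ (logarithmic terms included, so you never need them to vanish on their own), and then close with the symmetrisation identity $\pi(c_l+c_r)=\pi\sum_{j=1}^m\beta_j=\pi$, in which the logarithmic terms cancel identically and no symmetry is even required. The net effect is that your proof is uniform in $m$ and avoids both the parity case split and the separate vanishing argument; what the paper's version buys in exchange is slightly more information, namely that $\sum_{j=1}^m\gamma_j\log|a_j|=0$ holds on its own and that the common value of $c_l$ and $c_r$ equals $\pi^{-1}\sum_{j=1}^m\beta_j\arg(a_j)$, which makes the geometric origin of the constant $1/2$ (arguments pairing to $\pi$) more explicit.
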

\begin{proof}
To begin, note that symmetry of the residues over the real axis in~\cref{lem:GSF2} implies that $\gamma_{m+1-j}=-\gamma_j$, while the symmetry of the poles over the imaginary axis implies that $\log|a_j|=\log|a_{m+1-j}|$. Therefore, the logarithmic terms in $c_l$ and $c_r$ vanish because\footnote{When $m$ is odd, the relation $\gamma_{m+1-j}=-\gamma_j$ holds for $j=\lceil m/2\rceil$, so that $\gamma_{\lceil m/2\rceil}=0$.}
$$
\sum_{j=1}^m\gamma_j\log|a_j|=\sum_{j=1}^{\lfloor m/2\rfloor}(\gamma_j\log|a_j|+\gamma_{m+1-j}\log|a_{m+1-j}|)=0.
$$
Furthermore, the pole symmetries $a_{m+1-j}=-\bar a_j$ imply that $\arg(a_{m+1-j})=\pi-\arg(a_j)$, while the residue symmetries also imply that $\beta_{m+1-j}=\beta_j$. Therefore, we find that
\begin{equation}\label{eqn:endpts1}
c_l=\pi^{-1}\sum_{j=1}^m\beta_j\left(\pi-\arg(a_j)\right)=\pi^{-1}\sum_{j=1}^m\beta_j\arg(a_j)=c_r.
\end{equation}
Now, observe that $\beta_{j}\arg(a_j)+\beta_{m+1-j}\arg(a_{m+1-j})=\pi\beta_j$. For even $m$, we calculate that
\begin{equation}\label{eqn:endpts2}
\sum_{j=1}^m\beta_j\arg(a_j)=\sum_{j=1}^{m/2}\left(\beta_j\arg(a_j)+\beta_{m+1-j}\arg(a_{m+1-j})\right)=\pi\sum_{j=1}^{m/2}\beta_j=\frac{\pi}{2},
\end{equation}
The last equality follows from the fact that $\smash{2\sum_{j=1}^{m/2}\beta_j=\sum_{j=1}^m\beta_j=1}$ when $m$ is even. Analogously for odd $m$, we obtain
\begin{equation}\label{eqn:endpts3}
\sum_{j=1}^m\beta_j\arg(a_j)=\frac{\pi}{2}\beta_{\lceil m/2\rceil}+\pi\sum_{j=1}^{\lfloor m/2\rfloor}\beta_j=\frac{\pi}{2}.
\end{equation}
Here, we have used that $\smash{\beta_{\lceil m/2\rceil}+2\sum_{j=1}^{\lfloor m/2\rfloor}\beta_j=\sum_{j=1}^m\beta_j=1}$ when $m$ is odd. Plugging~\eqref{eqn:endpts2} and \eqref{eqn:endpts3} into~\eqref{eqn:endpts1} demonstrates that $c_l=c_r=1/2$, which concludes the proof.
\end{proof}

\section{Haldane model details} \label{sec:Haldane_details}

In this section, we fill in some details omitted in the discussion in \cref{sec:Haldane}. The explicit formula for the bulk band functions of $H_B(\vec{k})$ is
\begin{equation} \label{eq:full_band_functions}
    E_\pm(\vec{k}) = f(\vec{k}) \pm \sqrt{ g_1(\vec{k}) + g_2(\vec{k}) },
\end{equation}
where
\begin{equation}
\begin{split}
    &f(\vec{k}) := 2 t' \cos(\phi) \left[ \cos(k_1) + \cos(k_2) + \cos(k_1 - k_2) \right] \\
    &g_1(\vec{k}) := |t|^2 \left( |1 + \cos(k_1) + \cos(k_2)|^2 + |\sin(k_1) + \sin(k_2)|^2 \right) \\
    &g_2(\vec{k}) := | V + 2 t' \sin(\phi)( \sin(k_1) - \sin(k_2) - \sin(k_1 - k_2) ) |^2.
\end{split}
\end{equation}
The explicit action of the Bloch-reduced edge Hamiltonian $H_E(k)$ in $\ell^2(\mathbb{N};\mathbb{C}^2)$ is
\begin{equation} \label{eq:edge_H}
\begin{split}
    &\left(\oldhat{H}_{E}(k) \tilde{\psi}(k)\right)_{m} := \\
    &t \begin{pmatrix} (1 + e^{- i k_2}) \tilde{\psi}_m^B(k) + \tilde{\psi}_{m-1}^B(k) \\ (1 + e^{i k_2}) \tilde{\psi}_m^A(k) + \tilde{\psi}_{m+1}^A(k) \end{pmatrix} + V \begin{pmatrix} \tilde{\psi}_m^A(k) \\ - \tilde{\psi}_m^B(k) \end{pmatrix} \\
        &+ t' \begin{pmatrix} e^{i \phi} \left( e^{i k_2} \tilde{\psi}_m^A(k) + \tilde{\psi}_{m-1}^A(k) + e^{- i k_2} \tilde{\psi}_{m+1}^A(k) \right) + e^{- i \phi} \left( e^{- i k_2} \tilde{\psi}_m^A(k) + \tilde{\psi}_{m+1}^A(k) + e^{i k_2} \tilde{\psi}^A_{m-1}(k) \right) \\ e^{i \phi} \left( e^{- i k_2} \tilde{\psi}^B_m(k) + \tilde{\psi}_{m+1}^B(k) + e^{i k_2} \tilde{\psi}^B_{m-1}(k) \right) + e^{- i \phi} \left( e^{i k_2} \tilde{\psi}^B_m(k) + \tilde{\psi}^B_{m-1}(k) + e^{- i k_2} \tilde{\psi}^B_{m+1}(k) \right) \end{pmatrix},
\end{split}
\end{equation}
subject to the boundary condition $\psi_{-1}(k) = 0$.

\section*{Acknowledgements}

This work was supported by a Research Fellowship at Trinity College Cambridge (MJC), a Fondation Sciences Mathematiques de Paris Postdoctoral Fellowship at École Normale Supérieure (MJC), ARO MURI Award No. W911NF-14-0247 (ABW), and NSF DMREF Award No. 1922165 (ABW). The authors are grateful to Jacob Shapiro for helpful discussions.

\small
\bibliographystyle{plainurl}
\bibliography{top_ins}

\end{document}